\documentclass[reqno, 11pt]{amsart}
\usepackage{amssymb, amsmath,cite}
\usepackage{graphics}
\usepackage{epsfig}
\usepackage{amsxtra}
\newtheorem{thm}{Theorem}[section]

\newtheorem{lemma}[thm]{Lemma}

\newtheorem{prop}[thm]{Proposition}

\theoremstyle{definition}
\newtheorem{rmk}{Remark}[section]

\numberwithin{equation}{section}
\renewcommand{\Re}{\operatorname{Re}}
\renewcommand{\Im}{\operatorname{Im}}

\setlength{\marginparwidth}{0.8in}

\usepackage[letterpaper,hmargin=1in,vmargin=1.2in]{geometry}
\usepackage{amsmath, amssymb, amsthm, verbatim, url}
\usepackage{graphicx}
\usepackage{enumerate, pinlabel}
\usepackage{amsmath,amssymb,amsthm,mathrsfs,graphicx,url}
\usepackage[usenames,dvipsnames]{color}
\usepackage[colorlinks=true,linkcolor=Black,citecolor=Black,urlcolor=Black]{hyperref}

\setlength{\textheight}{8.50in} \setlength{\oddsidemargin}{0.00in}
\setlength{\evensidemargin}{0.00in} \setlength{\textwidth}{6.40in}
\setlength{\topmargin}{0.00in} \setlength{\headheight}{0.18in}
\setlength{\abovedisplayskip}{0.2in}
\setlength{\belowdisplayskip}{0.2in}
\setlength{\parskip}{0.05in}

\def \Real {{\mathbb R}}
\def \Integers {{\mathbb Z}}
\def \Complex {{\mathbb C}}
\def \Natural {{\mathbb N}}
\def \Sphere {{\mathbb S}}
\def \supp {\operatorname{supp}}
\def \mch {\mathcal{H}}

\def \mchs {\mch \oplus \mch}
\def \lp {\lambda'}
\def \NY {N_Y}
\def \cY {c_Y}

\def\bbbone{{\mathchoice {1\mskip-4mu {\rm{l}}} {1\mskip-4mu {\rm{l}}}
{ 1\mskip-4.5mu {\rm{l}}} { 1\mskip-5mu {\rm{l}}}}}

\def \mcd {\mathcal{D}}
\def \restrict {\!\!\upharpoonright}

\def \tS {S}
\def \gef {\Phi}
\def \zhat {\hat{Z}}
\def \zhath {\hat{Z}_h}

\def \xinf{X_\infty}

\def \Cs {\Complex_{\operatorname{slit}}}
\def \mcx {\mathcal{X}}
\def \defeq {:=}
\def \tN {\tilde{N}}
\def \psiso {\psi_{sp,1}}

\def \tchi {\tilde{\chi}}

\DeclareMathAlphabet{\mathpzc}{OT1}{pzc}{m}{it}
\def \pr {\mathpzc{p}}
\def \spec {\operatorname{spec}}
\def \Pre {\mathcal{P}}
\def \PV {\operatorname{PV}}
\def \ev {E}

\title[Wave  asymptotics]{Wave  asymptotics for waveguides and manifolds with infinite cylindrical ends}
\author{T.J. Christiansen and K. Datchev}
\address{Department of Mathematics, University of Missouri, Columbia, MO 65211 USA}
\email{christiansent@missouri.edu}
\address{Department of Mathematics, Purdue University, West Lafayette, IN 47907 USA}
\email{kdatchev@purdue.edu}
\begin{document}
\begin{abstract}
We describe wave decay rates associated to embedded resonances and spectral thresholds for 
waveguides and  manifolds with infinite cylindrical ends. We show that if the cut-off resolvent is polynomially 
bounded at high energies, as is the case in certain favorable geometries, then there is an 
associated asymptotic expansion, up to a $O(t^{-k_0})$ remainder, of solutions of the wave equation 
on compact sets as $t \to \infty$. In the most general such case we have $k_0=1$, and under an 
additional assumption on the infinite ends we have $k_0 = \infty$.  If we localize the solutions to the wave equation in frequency as well as in space, then our results hold for quite general waveguides and manifolds with infinite cylindrical ends.

To treat problems with and without boundary in a unified way, we introduce a black
box framework analogous to the Euclidean one of Sj\"ostrand and Zworski. We study the resolvent, generalized eigenfunctions, spectral measure, and spectral thresholds in this framework, providing a new approach to some
mostly 
 well-known results in the scattering theory of manifolds with cylindrical ends.

\end{abstract}
\maketitle

\section{Introduction}
Wave decay rates on a  manifold of infinite volume can be related to the
 geometry of the manifold via the 
behavior of the resolvent $(-\Delta - z)^{-1}$ in the vicinity of the spectrum.
A particularly important and long-studied class of problems
is that of compactly supported perturbations
of Euclidean space,  an example of which is the classical obstacle 
scattering problem. In that 
case the dominant contributions to wave decay rates come from the 
resolvent behavior near the only threshold in the spectrum, $z=0$, 
 and as $\Re z \to + \infty$.  
We think of the former as being 
related to the geometric infinity--here the spatial dimension is especially important.
The 
latter typically
 reflects the dynamics of the compact, and possibly empty,
 set of trapped geodesics.  In 
particular, in this setting we can separate the contributions of the 
geometric infinity and the 
trapped set. Similar results hold in many situations where infinity is 
``large" in a suitable sense, 
such as on asymptotically Euclidean, conic, and hyperbolic manifolds.

In this paper we consider manifolds which   are isometric to a 
 cylinder
 $(0, \infty) \times Y$ (with $Y$ compact) outside of a compact
set.  
 Thus we cannot separate the geometric infinity and the trapped geodesics, 
since 
the latter occur outside of arbitrarily large compact sets.
Also in contrast with the case of obstacle scattering is the relatively complicated nature of the spectrum of the Laplacian.
The continuous spectrum has infinitely
many thresholds, given by the eigenvalues of the Laplacian on $Y$, where the multiplicity increases.
In addition, there may be 
 up to infinitely many embedded resonances and eigenvalues.

A motivation for the study of such manifolds comes from waveguides and quantum
dots connected to leads. These appear in certain models of
electron motion in semiconductors and of propagation of electromagnetic and sound waves.  We give just a few pointers to the physics and applied math literature here  \cite{lcm,rai,rbbh,EK,bgw}.

Our main results concern manifolds with infinite cylindrical ends for which 
the resolvent
is well-behaved at high energies,
 which is the case in some 
favorable geometric situations discussed below.  In this case we can compute 
asymptotics of the wave equation 
in terms of the features of the spectrum discussed above. Roughly speaking,
a resonance at zero or  
any eigenvalues  
contribute non-decaying terms; any other embedded resonances, 
which can occur only at thresholds,
contribute terms decaying like $t^{-1/2-k}$, with
$k\in \Natural_0$; and non-resonant thresholds contribute terms decaying 
like $t^{-3/2-k}$. 

More specifically, 
in this paper we study asymptotic expansions as $t \to \infty$ of
 solutions to the wave equation 
\begin{equation}\label{e:cylcauchy}
(\partial_t^2 - \Delta ) u(t) = 0, \quad
u(0) = f_1, \quad \partial_t u(0) = f_2,
\end{equation}
where $\Delta\le 0$ is the Laplacian on a suitable Riemannian manifold $(X,g)$ with infinite cylindrical ends, 
and $f_1$ and $f_2$ are suitable initial conditions.   

 Our main results allow us to replace $-\Delta$ with a  more general 
self-adjoint operator $H$ but require an assumption on the 
high energy behavior of the cut-off resolvent of $-\Delta$ or $H$.  The companion paper 
\cite{ch-da} gives a technique of constructing manifolds 
with infinite cylindrical ends so that such estimates hold for the resolvent of the Laplacian, or in fact for the resolvent of many Schr\"odinger
 operators; see
Sections \ref{ex:non1} and \ref{ex:3fun1} of this paper for 
examples.  The paper \cite{ch-da3} studies the Dirichlet Laplacian on 
domains in $\Real^d$ which are ``star-shaped'' in a certain sense,
and shows that such estimates hold for the high-energy
resolvent; see Section \ref{ss:waveguides} for examples of domains satisfying
these conditions.
 However, if we apply a spectral cut-off to our solution
$u$ of (\ref{e:cylcauchy}), the hypothesis on the 
high energy resolvent is unnecessary--see Proposition \ref{p:speccutoffexp}.

Our starting point is the following elementary result for the wave equation on a Riemannian product. We will see below that many aspects of this result carry over to a range of more complicated geometries.

\subsection{The wave equation on a half cylinder}\label{s:hprod}
Let $(X,g) = ((0,\infty)_r \times  Y_y, dr^2 + g_Y)$, where $(Y,g_Y)$ is a compact Riemannian manifold without boundary. Let $\Delta_Y\le 0$ be the Laplacian on $(Y,g_Y)$, and let $\{\phi_j\}_{j=0}^\infty$ be a complete orthonormal set of eigenfunctions of $\Delta_Y$, with $-\Delta_Y \phi_j = \sigma_j^2 \phi_j$,  $0 = \sigma_0 \le \sigma_1 \le \cdots$. 

Let $f_1,\; f_2\in C_c^\infty(X)$. 
We shall consider solutions $u_D$ and $u_N$, satisfying
Dirichlet and Neumann boundary conditions
respectively,  to (\ref{e:cylcauchy}) on $X$.
Then we can solve \eqref{e:cylcauchy} by separating variables,  writing
\begin{equation}\label{eq:sepvarh}
 f_\ell = f_\ell(r,y) =  \sum_{j =0}^\infty f_{\ell,j}(r) \phi_j(y), \qquad 
u_B(t) = u_B(t,r,y) = \sum_{j =0}^\infty u_{j,B}(t,r) \phi_j(y),
\end{equation}
where $\ell \in \{1,\ 2\}$ and $B$ denotes the boundary condition
``D'' or ``N.''  We can perhaps most easily solve these initial value
problems by extending $f_1,\;f_2$ to be odd (Dirichlet) or even (Neumann) 
functions on $\Real\times Y$, and solving the wave equation
on the full cylinder.  In doing so, we see that we get different
expressions for the terms with $\sigma_j=0$ and those with
$\sigma_j\not =0$.  Suppose $f_l(r,y)=0$ if $r>R_1$, $l=1,\;2$.
Then if $\sigma_j=0$
\begin{equation}\label{e:u0asyh}
u_{j,N}(t,r) = \int_0^\infty f_{2,j}(r')dr',\qquad u_{j,D}(t)=0   \; \textrm{ if } \sigma_j=0,\; \text{for $0<r<R$ and $t>R+R_1$ }
\end{equation}
by d'Alembert's formula. 
On the other hand, if $\sigma_j>0$ for  
 any nonnegative integer $k_0$ we have for $t$ sufficiently large
\begin{equation}\label{e:ujasyh}
 u_{j,B}(t,r) =  \sum_{k=0}^{k_0}  \left[p_{j,k,B}(r)\cos(\sigma_j t + \tfrac \pi 4)  + q_{j,k,B}(r)\sin(\sigma_j t + \tfrac \pi 4)\right]t^{-k- \frac 12}  + O\left(t^{-k_0 - \frac 32}\right),  \quad \textrm{ if } \sigma_j>0,
\end{equation}
by the method of stationary phase (see also \cite{horklein} for asymptotics as $r \to \infty$).
Here  for each $j$ $p_{j,k,B}$ and $q_{j,k,B}$ are polynomials in $r$ of degree at most $2k$, 
and the remainders are uniform in $j$ and as $r$ varies in a compact set. Moreover, for the Neumann boundary condition
\begin{equation}\label{e:pj0qj0h}
 p_{j,0,N} =2 \sqrt{\frac{\sigma_j}{2\pi}}\int_0^\infty f_{1,j}(r')dr', \qquad  q_{j,0,N} = \frac{2}{\sqrt{2\pi\sigma_j}}\int_0^\infty f_{2,j}(r')dr'.
\end{equation}
With the Dirichlet boundary condition, $p_{j,0,D}=0=q_{j,0,D}$, and
\begin{equation}\label{e:pj1qj1h}
 p_{j,1,D}(r)= 2r \sqrt{\frac{\sigma_j}{2\pi}}\int_0^\infty r'f_{2,j}(r')dr', \qquad q_{j,1,D}(r) = 2\sigma_j r \sqrt{\frac{\sigma_j}{2\pi}} \int_0^\infty r'f_{1,j}(r')dr'.
\end{equation}

To interpret the above result in terms of the spectrum, we can similarly write the resolvent of $-\Delta_B$ in terms of shifted resolvents of 
$-(\partial_r^2)_B$.  If $f(r,y)=\sum_{j=0}^\infty f_j(r)\phi_j(y)$, then
\[
 (-\Delta_B - z)^{-1} f =
\sum_{j=0}^\infty \left(\left(-(\partial_r^2)_B + \sigma_j^2 - z\right)^{-1}f_j\right)\phi_j, \qquad z \in \mathbb C \setminus [0,\infty).
\]
From this we see that the spectrum of $-\Delta_B$ is $[0,\infty)$, is purely absolutely continuous, and has thresholds (points at which multiplicity jumps) at the eigenvalues of $-\Delta_Y$.  For the Neumann Laplacian on
the half cylinder, at 
each threshold the spectrum contains an embedded resonance of
multiplicity equal to the  multiplicity of the corresponding eigenvalue of $-\Delta_Y$ (and there are no other resonances embedded in the spectrum).  The
Dirichlet Laplacian on the half-cylinder has no embedded resonances.

We now see that the coefficient of the constant term in the expansion \eqref{e:u0asyh}, given by $u_{j,B}(r)$ with $\sigma_j=0$
as in \eqref{e:pj0qj0h}, is determined by 
the resonant states at zero and a (distributional) pairing with 
the initial data.
The number of states is equal to the number of connected components of $Y$.
The terms of order $t^{-1/2}$ in \eqref{e:ujasyh} have coefficients given in \eqref{e:pj0qj0h} in
terms of resonant states
at nonzero eigenvalues of $-\Delta_Y$ 
and (distributional) pairings of these with the initial data.  
 These terms are $zero$ in the absence of threshold
resonances, as is the case for the Dirichlet half-cylinder. 
From the example of the Dirichlet half-cylinder, we see that in
the absence of eigenvalues or 
resonances embedded in the continuous spectrum we may
have a wave decay like $O(t^{-3/2})$, but we cannot in general expect faster
decay.

In the remainder of the paper we adapt the asymptotics above, in a somewhat weaker form, to Schr\"odinger operators on more general manifolds with cylindrical ends. One difficulty is that such operators can have much nastier behavior of the resolvent near the continuous spectrum, including the possible presence of infinitely many embedded eigenvalues, accumulating at infinity, \cite{Ch-Zw95, Pa95}.  
In additional to the possible existence of resonances embedded in the continuous spectrum, there may be additional ``complex'' resonances,
resonances which lie on the Riemann surface $\zhat$ to 
which the resolvent continues; see Section \ref{ss:r}.
 In general settings
we cannot rule out the possibility that at high energy these rapidly approach
the continuous spectrum.
Below we mostly 
restrict our attention to some particular cases in which the resolvent is better behaved.  In these cases, the non-real resonances do not approach the 
continuous spectrum too rapidly, and they do not appear in the expansions.

\subsection{Two term asymptotics for
 mildly trapping manifolds 
or waveguides
with cylindrical ends}\label{s:two} Our first extension of the results of 
Section \ref{s:hprod} is to manifolds with infinite cylindrical ends for which we have polynomial bounds on the cut-off
resolvent.  Rather than state the theorem in  full generality here,  for 
now we let $(X,g)$ 
be one of the examples in Sections \ref{ex:non1} and \ref{ex:3fun1}.
Alternatively, we allow the waveguides 
$X\subset \Real^d$ to be one of the domains 
in Section \ref{ss:waveguides}, in which 
case we consider the case of Dirichlet boundary
conditions.  In each case, $Y$ is the cross section of the infinite cylindrical
ends, $\Delta_Y\leq 0$ is the Laplacian on $Y$ (with Dirichlet boundary
conditions for the domains of Section
\ref{ss:waveguides}), and $0\leq \sigma_0^2\leq \sigma_1^2\leq \cdots$ are the eigenvalues of $-\Delta_Y$, repeated with 
multiplicity.

By the results of \cite[Theorem 1.1]{ch-da} for the 
examples of Sections \ref{ex:non1} and \ref{ex:3fun1}  and \cite{ch-da3}
for the examples of Section \ref{ss:waveguides},
  $-\Delta$ has 
a finite, possibly empty, set of eigenvalues which we denote $\{\ev_\ell\}$
(repeated with multiplicity), with corresponding orthonormal 
$L^2$ eigenfunctions $\{\eta_\ell\}$: $-\Delta \eta_\ell =\ev_\ell\eta_\ell$.
  The generalized eigenfunctions
  $\{\gef_j\}$ are defined in (\ref{eq:gef});
each $\gef_j$  depends on a parameter $\lambda$ and a variable $x\in X$,
so that $\gef_j(\lambda)=\gef_j(\lambda, x)$.
The $\gef_j(\lambda)$  are (certain) elements of the null space of $-\Delta-\lambda^2$
which are not in $L^2(X)$.
 If $\gef_j(\sigma_j)\not \equiv 0$ then $\sigma_j$ corresponds to a threshold 
resonance and $\gef_j(\sigma_j)$ is a resonant state; 
$\gef_j(\sigma_j)\in C^\infty(X)$.  See Section
\ref{ss:gef} for a  more detailed
discussion of the generalized eigenfunctions. 

Our first result is a two term expansion, an example of which is the 
following theorem.  In the statement of the theorem, 
for $f_j\in C_c^\infty(X)$ and $v\in C^\infty(X)$, $\langle f_j,v\rangle =\int_X f_j \overline{v}$.
\begin{thm}\label{thm:t1intro} Let  $(X,g)$ be as in the examples in 
Sections \ref{ex:non1} or \ref{ex:3fun1}, or let $X\subset \Real^2$ be a domain
as in Section \ref{ss:waveguides}.  Let
 $ \ f_1, \ f_2 \in C_c^\infty(X)$ be given
and $u(t)$ solve \eqref{e:cylcauchy}, and in addition satisfy Dirichlet
boundary conditions if $X$ is as in Section 
\ref{ss:waveguides}. Then we can write
\begin{equation}\label{e:udectwo}
 u(t) = u_e(t) + u_{thr}(t) + u_r(t), 
\end{equation}
where
\begin{equation}\label{eq:ue1}
 u_e(t)=u_e(t,x) = \sum_{\ev_\ell\in \spec_{p}(-\Delta)} \eta_\ell(x) \left( \cos((\ev_\ell)^{1/2} t) \langle f_1,\eta_\ell\rangle_ + 
 \frac{\sin( (\ev_\ell)^{1/2} t)}{(\ev_\ell)^{1/2} } \langle f_2,\eta_\ell\rangle \right)
\end{equation}
and
\begin{multline}\label{eq:uthr1}
 u_{thr}(t)=u_{thr}(t,x) = \frac{1}{4} \sum_{\sigma_j=0} \gef_j(0,x)\langle f_2,\gef_j(0)\rangle \\ + 
\frac{1}{2\sqrt{t}}\sum_{\sigma_j>0} \sqrt{\frac{ \sigma_j}{2\pi}}\cos(\sigma_j t+\pi/4) \gef_j
(\sigma_j,x) \langle f_1,\gef_j(\sigma_j) \rangle \\
+
\frac{1}{2\sqrt{t}}\sum _{\sigma_j>0}  \frac{1}{\sqrt{2\pi \sigma_j}} \sin(\sigma_j t +\pi/4) \gef_j(\sigma_j,x) \langle f_2,\gef_j(\sigma_j)\rangle
\end{multline}
where $x\in X$.
Moreover, for any $\chi\in C_c^\infty(X)$, there  is a constant $C$ so that the remainder, $u_r(t)$, satisfies
\begin{equation}\label{eq:ur1}
 \|\chi u_r(t) \|_{L^2(X)} \le C t^{-1},\; \text{for $t$ sufficiently large}.
\end{equation}
\end{thm}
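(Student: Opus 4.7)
My plan is to express $u(t)$ via the spectral theorem, isolate the point spectrum contribution, and then evaluate the resulting resolvent integral by contour deformation combined with stationary phase at the thresholds.

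First I would write
\[
 u(t) = \cos(t\sqrt{-\Delta})f_1 + \frac{\sin(t\sqrt{-\Delta})}{\sqrt{-\Delta}}f_2
\]
and use Stone's formula to split $u(t) = u_e(t) + u_c(t)$, where $u_e(t)$ comes from the point spectrum and yields exactly \eqref{eq:ue1}, while $u_c(t)$ is a superposition of the resolvents $R(\lambda\pm i0) = (-\Delta-\lambda\mp i0)^{-1}$ applied to $f_1, f_2$ against $\cos(t\sqrt\lambda)$ and $\sin(t\sqrt\lambda)/\sqrt\lambda$. Multiplying by the spatial cutoff $\chi$ and using the compact support of $f_1,f_2$, the continuous-spectrum piece becomes an integral
\[
 \chi u_c(t) = \frac{1}{2\pi i}\int_0^\infty e^{\pm it\tau}\,\tau\,\chi\bigl[R(\tau^2+i0)-R(\tau^2-i0)\bigr]\chi\,f\,d\tau
\]
after the substitution $\lambda=\tau^2$, with obvious variants for $f_1, f_2$.

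Next I would exploit the meromorphic continuation of $\chi R(\tau^2)\chi$ to the Riemann surface $\hat Z$ of the cylindrical ends; near each threshold $\sigma_j$, the continuation develops square-root branching whose singular part is a rank-one operator built from the generalized eigenfunction $\gef_j(\sigma_j)$. After inserting a smooth partition of unity in $\tau$ localizing near each $\sigma_j$ and away from all thresholds, the off-threshold pieces can be controlled by repeated integration by parts in $\tau$, using the polynomial high-energy bound on the cut-off resolvent guaranteed by the hypothesis on $(X,g)$ and the smoothness of $\chi R(\tau^2\pm i0)\chi$ on compact subsets of $(\sigma_{j-1}^2,\sigma_j^2)$; this gives $O(t^{-N})$ decay for any $N$. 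At each threshold $\sigma_j>0$ I would apply the stationary phase / steepest descent method to the branch-point singularity, so that the leading singular term contributes
\[
 \frac{1}{2\sqrt t}\sqrt{\tfrac{\sigma_j}{2\pi}}\cos(\sigma_j t+\tfrac{\pi}{4})\,\gef_j(\sigma_j)\langle f_1,\gef_j(\sigma_j)\rangle
\]
(and the analogous $\sin$ term for $f_2$), with a remainder of order $t^{-3/2}$; for $\sigma_0=0$ the zero resonance produces the constant term in \eqref{eq:uthr1} via the same stationary phase applied at an endpoint.

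The main obstacle I expect is uniformity: to obtain the $O(t^{-1})$ remainder I need the stationary-phase errors at the infinitely many thresholds $\sigma_j\to\infty$ to sum to a finite constant, and the ``off-threshold'' $O(t^{-N})$ bounds must likewise be summable. This requires combining the polynomial resolvent bound at high energies with enough control of the branch-point expansion coefficients in $j$; here the fact that $f_1,f_2\in C_c^\infty(X)$ decouples rapidly in $j$ (since on the ends $\gef_j(\sigma_j)$ restricts to something like $\phi_j$, and the inner products $\langle f_i,\gef_j(\sigma_j)\rangle$ decay rapidly in $\sigma_j$ by integration by parts on $Y$) should allow the required summability. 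Once this summability is established, reassembling the three pieces recovers \eqref{e:udectwo}--\eqref{eq:ur1}.
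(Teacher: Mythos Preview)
Your overall strategy---Stone's formula, meromorphic continuation, stationary phase at thresholds---matches the paper's, and the point-spectrum and zero-threshold contributions come out as you describe. But there is a genuine gap in your treatment of the high-energy region.

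First, a clarifying remark: under the hypotheses, the polynomial bound on $\tilde\chi R(\lambda)\tilde\chi$ on the boundary of the physical region for $|\lambda|>\lambda_0$ forces $\Phi_j(\sigma_j)=0$ for all $\sigma_j>\lambda_0$ (otherwise the resolvent would blow up like $\tau_j(\lambda)^{-1}$ as $\lambda\to\sigma_j^+$, violating the bound). So the sums in $u_{thr}$ are \emph{finite}, and your worry about summing the leading stationary-phase coefficients via decay of $\langle f_i,\Phi_j(\sigma_j)\rangle$ is moot; those terms simply vanish for large $j$.

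The real difficulty is with the \emph{remainders}, and here your plan breaks down. You propose to control the off-threshold pieces by repeated integration by parts in $\tau$, citing the polynomial high-energy resolvent bound. But integration by parts transfers $\partial_\tau$ onto $\chi R(\tau^2\pm i0)\chi$, and a polynomial bound on the resolvent does \emph{not} by itself give polynomial bounds on its $\lambda$-derivatives at high energy. To get derivative bounds via Cauchy estimates you need the resolvent to be analytic on balls whose radii do not shrink too fast, and that requires a lower bound on the spacing $\nu_{l+1}-\nu_l$ of the \emph{distinct} thresholds---precisely the extra hypothesis \eqref{eq:distinct} that the paper imposes for the stronger Theorem~\ref{thm:intro2}, but which is \emph{not} assumed in Theorem~\ref{thm:t1intro}. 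The same problem afflicts your stationary-phase remainders at the high-energy thresholds: those remainders involve $\tau_j$-derivatives of the resolvent near $\sigma_j$, and they do not factor through $\langle f_i,\Phi_j(\sigma_j)\rangle$, so smoothness of the data on $Y$ does not help.

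The paper avoids this by a different decomposition. It splits the spectral integral into three ranges: $|\lambda|<\lambda_0$ (a fixed compact interval containing all the nonzero terms of $u_{thr}$; here your smooth-partition/stationary-phase analysis is exactly right), $\lambda_0<|\lambda|<\alpha(t)t^{\epsilon}$ (the ``medium'' range), and $|\lambda|>\alpha(t)t^{\epsilon}$ (handled directly by the spectral theorem and the regularity of $f_1,f_2$). For the medium range the paper does \emph{not} integrate by parts; instead it writes the integral as a sum over the intervals $[\sigma_j,\sigma_{j+1}]$ and performs a contour deformation on each, pushing into $\{\operatorname{Im}\lambda<0\}\cap\mathbb C_{\mathrm{slit}}$ down to depth $\log t/t$. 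This uses only the analyticity and the polynomial size of $\chi R\chi$ on the resonance-free strip---no derivatives---and yields $O(t^{-1})$ per segment with a constant controlled by $(1+\sigma_j)^{N_2-2m}$. Summing via the Weyl law for $-\Delta_Y$ then gives the global $O(t^{-1})$, at the cost of taking enough powers $m$ of $(H+M_0)^{-1}$ (i.e., enough regularity of the data). This contour-deformation step is the idea your proposal is missing.
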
 
For the manifolds and domains considered here, as a consequence
of \cite[Theorem 1.1]{ch-da} and \cite[Theorem 3]{ch-da3}, each of these sums 
in (\ref{eq:ue1}) and (\ref{eq:uthr1}) is in fact a finite sum.  
Since we have assumed the initial 
data $f_1, \;f_2\in C_c^\infty(X)$, we could replace
the bound (\ref{eq:ur1}) by  
$\|\chi u_r(t) \|_{H^m(X)}\leq Ct^{-1}$ for any $m\in \Natural$,
 with a new constant $C$ depending on $m$.

We
compare the expansion of $u$ in Theorem \ref{thm:t1intro} with that of the 
solution to the wave equation on the Neumann half cylinder given by
 (\ref{eq:sepvarh}-\ref{e:pj0qj0h}).  From the expression for $u_{thr}$ in
(\ref{eq:uthr1}), if $\sigma_j=0$, then $\langle \gef_j(0), f_2\rangle $
corresponds to $2 \int_0^\infty f_{2,j}(r)dr$ from (\ref{e:u0asyh})
and (\ref{eq:sepvarh}).  If $\sigma_j>0$,
then $\gef_j (\sigma_j) \langle f_1, \gef_j(\sigma_j)\rangle $
corresponds to $ 2\sqrt{\frac{2\pi}{\sigma_j}}p_{j,0,N}(r) \phi_j$
from (\ref{e:ujasyh}).  In contrast, for the Dirichlet half-cylinder
there are no embedded resonances.  Hence, for the Dirichlet half-cylinder
$\gef_j(\sigma_j)=0$ for each $j.$

Theorem \ref{thm:wavedecay}  is 
 a more general version of Theorem \ref{thm:t1intro}.   In Theorem \ref{thm:wavedecay} we 
can allow any manifold with infinite cylindrical ends for which we have a polynomial bound on the cut-off resolvent of the Laplacian at
high energies. 
That this condition holds for the manifolds in Sections \ref{ex:non1} and 
\ref{ex:3fun1} is shown in \cite{ch-da}, where such estimates are shown for the resolvent of $-\Delta+V$, for a large class of potentials $V$.  Theorems 1 and 2 
of \cite{ch-da3} give the needed resolvent estimates for the Dirichlet
Laplacian for the domains of Section \ref{ss:waveguides}.

  In fact, our wave
expansion, Theorem \ref{thm:wavedecay},
 holds for more general compactly supported perturbations of the Laplacian on a manifold or domain
with infinite cylindrical ends, as long 
as appropriate high-energy resolvent bounds hold.
 In Section \ref{ss:blackbox} we adapt the Euclidean
black box formalism of \cite{sj-zw}
to the cylindrical end setting. This allows  us to treat in a unified way problems with and 
without a boundary extending to infinity. This formalism
also allows us to handle the Klein-Gordon 
equation in addition to the wave equation.
In Section  \ref{ss:blackbox} we give a self-contained
presentation of 
some properties of the resolvent, generalized eigenfunctions, and 
spectral measure in this very general setting.   This
extends  some results known
for manifolds with infinite cylindrical ends. 



\subsubsection{Examples with minimal trapping}\label{ex:non1}
Let $r$ be the radial coordinate in $\mathbb R^d$ for some $d\ge 2$, and let
\[
  X = \mathbb R^d, \qquad g_0 = dr^2 + F(r) dS,
\]
 where $dS$ is the usual metric on the $(d-1)$-dimensional unit sphere, $F(r) = r^2$ near $r=0$, and $F'$ is compactly supported on some interval $[0,R]$ and positive on $(0,R)$; see Figure \ref{f:cigar}. 
\begin{figure}[h]
\hspace{3cm}
\includegraphics{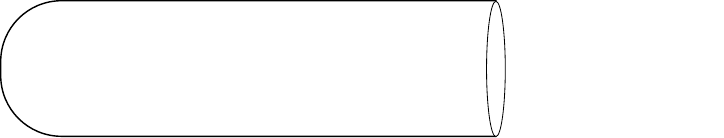}
 \caption{A cigar-shaped warped product.}\label{f:cigar}
\end{figure}

Then all $g_0$-geodesics obey, for $r(t)\not =0$,
\[\ddot r(t) := \frac{d^2}{dt^2} r(t) = 2 |\eta|^2 F'(r(t))F(r(t))^{-2}\ge0,\]
 where $r(t)$ is the $r$ coordinate of the geodesic at time $t$ and $\eta$ is the angular momentum, constant on each geodesic. Consequently, the only trapped geodesics (that is, the only maximally extended geodesics with $\sup_{t \in \mathbb R} r(t) < +\infty$) are the ones with $\dot r(t) \equiv F'(r(t)) \equiv 0$, that is the ones in the cylindrical end that have no radial momentum. This is the smallest amount of trapping a manifold with a cylindrical end can have.

Let $g$ be any metric on $ X$ such that $g-g_0$ is supported in  
$\{(r,y)\mid r <R \}$, and such that $g$ and $g_0$ have the same trapped geodesics. 
For example we may take $g = g_0 + c g_1$, where $g_1$ is any symmetric two-tensor with support in $\{(r,y)\mid r < R\}$, and $c \in \mathbb R$ is chosen sufficiently small depending on $g_1$. 
Alternatively, we may take $g = dr^2 + g_S(r)$, where $g_S(r)$ is a smooth family of metrics on the sphere such that $g_S(r) = r^2dS$ near $r=0$ and $g_S(r) = F(r)dS$ near $r \ge R$, and such that $\partial_r g_S(r) > 0$ on $(0,R)$. This way we can construct examples where $g-g_0$ is not small. 

For this set of examples, it is a consequence of the results of \cite{ch-da} that
there are at most finitely many eigenvalues of the Laplacian on $X$, and at most
finitely many threshold resonances.

\subsubsection{Examples based on convex cocompact manifolds}\label{ex:3fun1}

Let $(X,g_H)$ be a convex cocompact hyperbolic surface, such as the symmetric hyperbolic `pair of pants' surface with three funnels depicted in Figure \ref{f:3fun}.

\begin{figure}[h]
\labellist
\small
\pinlabel $r$ [l] at 280 -3
\pinlabel $\cosh^2\!r$ [l] at 225 70
\pinlabel $F(r)$ at 275 36
\endlabellist
 \includegraphics[width=5cm]{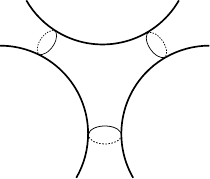} \hspace{2cm} \includegraphics[width=7cm]{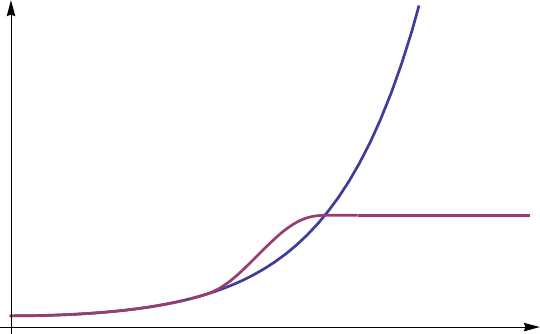}
 \caption{A hyperbolic surface $(X, g_H)$ with three funnels, and a modification of the metric which changes the funnel ends to cylindrical ends.}\label{f:3fun}
\end{figure}

In particular, there is a compact set $N \subset X$ (the convex core of $X$) such that 
\[
 X\setminus N = (0,\infty)_r\times Y_y, \qquad g_H|_{X \setminus N} = dr^2 + \cosh^2 \!r\, dy^2,
\]
where $Y$ is a disjoint union of $k\ge1$ geodesic circles, not 
necessarily all of the same length.

We 
construct a metric on $X$ which
gives it the structure of a manifold with infinite cylindrical ends
by modifying the metric on the funnel ends.  Take $g$ such that 
\begin{equation}\label{eq:ppants}
g|_N = g_H|_N, \qquad g|_{X \setminus N} = dr^2 + F(r)dy^2,
\end{equation}
 where $F(r) = \cosh^2\!r$ near $r=0$, and  $F'$ is compactly supported and positive on the interior of the 
convex hull of  its support.

It is also possible to construct examples with dimension $d\geq 3$ -- see \cite[Section 2.2]{ch-da}.


Again, for this set of examples, it is a consequence of the results of \cite{ch-da} that
there are at most finitely many eigenvalues of the Laplacian on $X$, and at most
finitely many threshold resonances.

\subsubsection{Star-shaped planar waveguides, with the Dirichlet Laplacian}
\label{ss:waveguides}
We next turn to examples of domains $X\subset \Real^2$ 
for which Theorem \ref{thm:t1intro} holds
for solutions $u$ of the wave equation
 which in addition satisfy 
Dirichlet boundary conditions on $\partial X$.  Though here we restrict
ourselves to $d=2$ for simplicity, the
paper \cite{ch-da3} has conditions on domains in $\Real^d$ with $d\geq 3$
which ensure the theorem holds, as well as alternative sufficient 
conditions for planar 
domains.

\begin{figure}
[h]
\labellist
\pinlabel $X$ [l] at 280 190
\pinlabel $X$ [l] at 900 190
\pinlabel $X$ [l] at 1480 190
\endlabellist
\includegraphics[width=3cm]{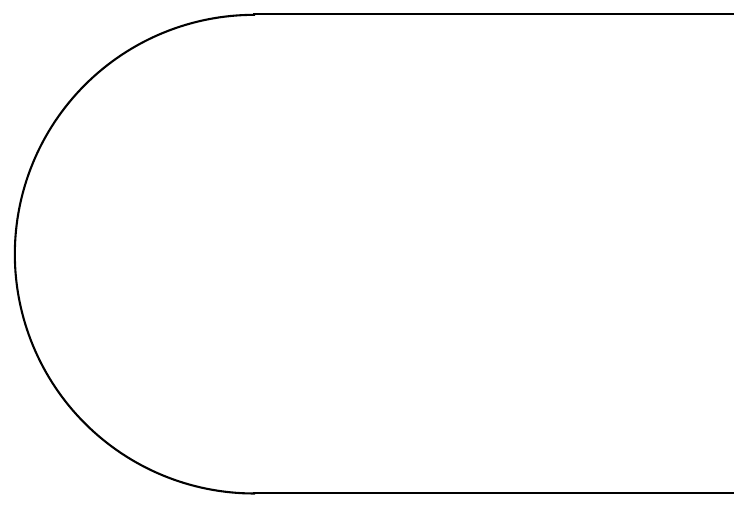}
\hspace{1cm}
\includegraphics[width=4cm]{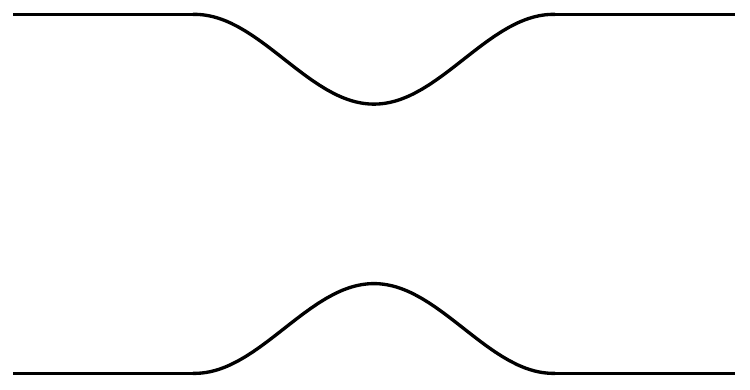}
\hspace{1cm}
\includegraphics[width=3.4cm]{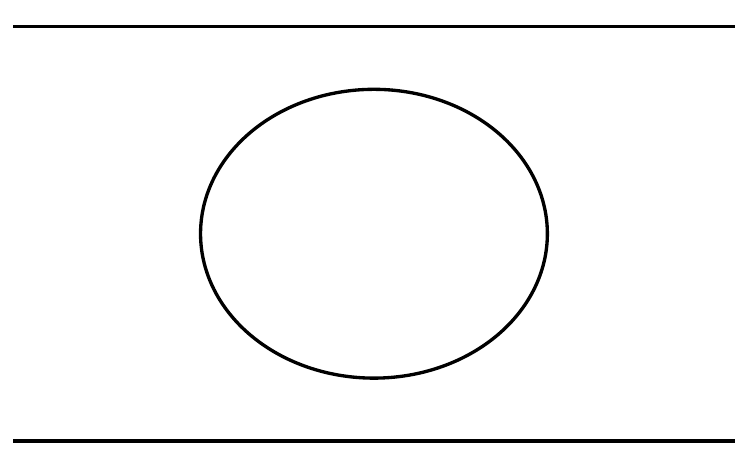}
 \caption{A domain with one end, and two domains with two isometric ends.}\label{f:cig}
\label{f:isometricends}
\end{figure}




Let
 $(s,y)$ be Cartesian coordinates on $\Real^2$ and $X\subset \Real^2$
be a domain. Three examples of domains we allow are shown in Figure \ref{f:isometricends}. They are defined as follows:
\begin{enumerate}
\item Let $X$ be the union of the open disk of radius $1$ centered at $(1,0)$ with the strip $(1,\infty)\times(-1,1)$.
\item Let $X$ be the set of $(s,y)$ such that $ f_1(s) - 1 \le y \le 1 - f_2(s)$, where $f_1$ and $f_2$ are nontrivial compactly supported functions in $C^{1,1}(\mathbb R)$, which take values in $[0,1)$, and which are monotonic away from $s=0$.
\item Let $X = (\mathbb R \times (-1,1)) \setminus K$, where $K$ is a compact, convex subset of $\mathbb R \times (-1,1)$ which has $C^{1,1}$ boundary and is symmetric about the $y$ axis.
\end{enumerate}  

To state more general assumptions, let
$\nu=(\nu_s,\nu_y)$ be the outward pointing unit normal to $\partial X$.
We assume 
 $s\nu_s\leq 0$ on $\partial X$, and call 
such domains ``star-shaped'' with respect to $s=\pm \infty$.
  Given  an open interval
$I$ and positive 
constant $C_I$, define
$\Gamma_F=\Gamma_F(I,C_I)\subset \partial X\cap (I\times \Real)  $ via
\begin{equation}\label{e:flare}
\Gamma_F=\{(s,y)\in \partial X\cap(I\times \Real)\mid\; 
s \nu_s \le -C_I \} .
\end{equation}
Now we assume that there is an open interval $I$ and a positive 
constant $C_I$ so that 
 the intersection of $X$ with $I \times \mathbb R$ consists of bounded open sets 
$X_1, \dots, X_K$ with mutually disjoint closures such that for each 
$k = 1, \dots ,K$, 
\[(\partial X \cap \partial X_k) \setminus \Gamma_F \subset I \times \{a_k\},\] for some real $a_k$.


Moreover, we assume $X$ is a domain with infinite cylindrical ends.
Specializing to the case here,
by this we mean that there is $R_0>0$ such that $X\cap ([-R_0,R_0] \times \mathbb R)$ is bounded and 
\begin{equation}\label{eq:cylend}
X \cap ((-\infty,-R_0] \times \mathbb R) = (-\infty,-R_0] \times Y_-, \qquad X \cap ([R_0,\infty) \times \mathbb R) = [R_0, \infty) \times Y_+,
\end{equation}
where $Y_-$ and $Y_+$ are (not necessarily connected) bounded open sets in $\mathbb R$.  We allow the possibility that one, but not both, of 
$Y_{\pm}$ is the empty set.  Then $Y=Y_+\sqcup Y_-$.

Finally we assume that 
each point $p \in \partial X$ has a neighborhood $U_p$ so that either 
$U_p\cap X$ is convex or $U_p\cap \partial X$ is $C^{1,1}$.
 Figures 
\ref{f:isometricends} and \ref{f:nonisometricends} include examples of 
domains satisfying all of these conditions.


\begin{figure}[h]
\labellist
\pinlabel $X$ [l] at 300 150
\pinlabel $X$ [l] at 760 156
\endlabellist
\includegraphics[width=4cm]{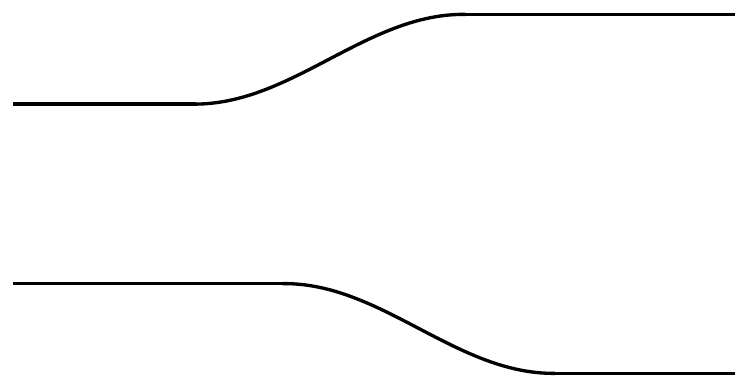} 
\hspace{1.5cm}
\includegraphics[width=3.4cm]{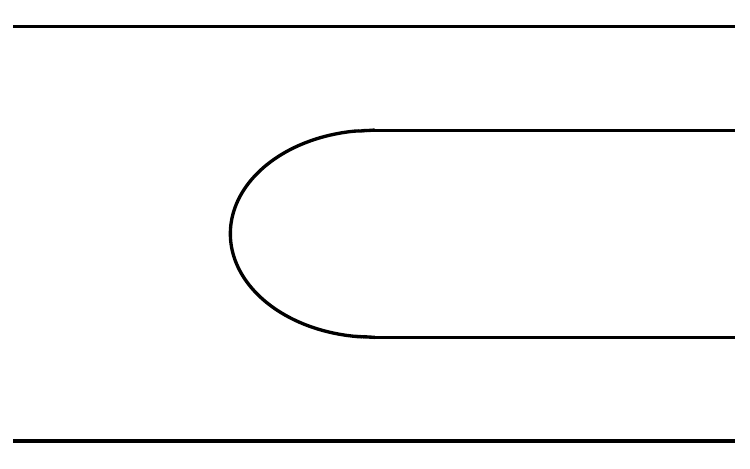} 
 \caption{Some domains with multiple nonisometric ends.}
\label{f:nonisometricends}
\end{figure}

Resolvent estimates for domains of this type are studied in \cite{ch-da3}. 
Results of that paper ensure that for the class of domains in 
$\Real^2$ described here, there are no embedded eigenvalues or
resonances.  Hence the sums in (\ref{eq:ue1}) and (\ref{eq:uthr1}) are 
actually $0$, and $\| \chi u(t)\|=O(t^{-1})$.

\subsection{Complete expansions under an additional spacing condition on the thresholds}
\label{s:compexp} In this subsection we suppose that $(X,g)$ is as in Section \ref{s:two} but with an additional assumption on the eigenvalues of $-\Delta_Y$.   This assumption holds for
all the examples in Section \ref{ex:non1}, but  only for
some of the other examples.  

To state it, let $\{\nu_l\}_{l=1}^\infty$ be the sequence of square roots of \textit{distinct positive} eigenvalues of $-\Delta_Y$ in increasing order, so 
that $0  < \nu_1< \nu_2<\cdots$. The assumption is that there are positive constants $c_Y$ and $N_Y$, such that
\begin{equation}\label{eq:distinct}
 \nu_{l+1} - \nu_l \ge c_Y \nu_l^{-N_Y},
\end{equation}
 for all $l\in \Natural $ with $\nu_l\geq 1$. Note that this assumption allows the eigenvalues of $-\Delta_Y$ to have high multiplicities, but forbids \textit{distinct} eigenvalues from clustering too closely together.  
Since for the examples in Section \ref{ex:non1} $\nu_l= \sqrt{l(l+d-2)}$,
it is easy to see \eqref{eq:distinct} holds.  On the other hand, consider
the examples illustrated using Figure \ref{f:3fun} in Section \ref{ex:3fun1}--these have
$d=2$ and 
three connected ends.  In this case,
$Y$ consists of the disjoint union of three circles. 
Noting that the function $F(r)$ of (\ref{eq:ppants}) is the same for 
each end, if the radii of these three circles
are rational multiples of each other, then (\ref{eq:distinct}) holds.
In fact, if the radii are algebraic 
multiples of each other, this holds by  Liouville's Theorem  on approximation
of algebraic numbers, e.g. \cite[Proposition 7.5.15]{DKS}.   If, however, for some pair of the circles 
the radius of one circle is a transcendental multiple of the other, 
then this condition may not hold.  In general, this condition holds for
some of the examples of Section \ref{ex:3fun1}, but not for all.  Turning
to the domains in $\Real^2$
 pictured in Section \ref{ss:waveguides}, we can see 
that (\ref{eq:distinct}) holds for all the examples in Figure \ref{f:isometricends}, where each domain either has a single
connected end, or two isometric connected ends.
For the examples in Figure \ref{f:nonisometricends}, whether or not 
(\ref{eq:distinct}) holds is again subtle, depending on the ratios of 
the lengths of the cross-sections of the ends.

With the assumption (\ref{eq:distinct}) and a bound on the cut-off resolvent
at high energy we can bound derivatives of the cut-off resolvent
at high energy, see Lemmas \ref{l:usecauchyestimates1} and 
\ref{l:usecauchyestimates2}.  This allows us to refine our expansions.

\begin{thm}\label{thm:intro2}
Let  $(X,g)$ be as in the examples in 
Sections \ref{ex:non1} or \ref{ex:3fun1}.
Alternatively, let $X\subset \Real^d$ be a domain as in 
section \ref{ss:waveguides}, and consider
the Dirichlet boundary conditions.
In either case, assume (\ref{eq:distinct}) holds.  Let
 $ \ f_1, \ f_2 \in C_c^\infty(X)$ be given, and let
$u(t)$ solve \eqref{e:cylcauchy} and satisfy Dirichlet boundary
conditions in the case of a domain $X\subset \Real^2$.
Then for each $k_0 \in \mathbb N$ we can write
\[
 u(t) = u_e(t) + u_{thr,k_0}(t) + u_{r,k_0}(t),
\]
where $u_e$ is still given by \eqref{eq:ue1} and
\begin{equation*}
 u_{thr,k_0} (t)= \frac{1}{4} \sum_{\sigma_j=0} \Phi_j(0)\langle f_2,\Phi_j(0)\rangle+
 \sum_{k=0}^{k_0-1}t^{-1/2-k} 
\sum_{l=1}^\infty (e^{it \nu_l}b_{l,k,+}+ e^{-it \nu_l}b_{l,k,-})  
\end{equation*}
for some  $b_{l,k,\pm}\in \langle r \rangle ^{1/2+2k+\epsilon}L^2(X)$.
For any $\chi \in C_c^\infty(X)$ there is a constant $C$ so that
$$\sum_{l=1}^\infty \|\chi  b_{l,k,\pm}\|_{L^2(X)}<C,\; k=0,1,2,...,k_0-1$$
and
\[
 \|\chi u_{r,k_0}(t)\|_{L^2(X)} \le C t^{-k_0}\; \text{for $t$ sufficiently large}.
\]  Moreover, the $b_{l,k,\pm}$
 are determined by the value $\nu_l$, the initial data
$f_1$, $f_2$, and suitable 
derivatives  of elements of the set 
$\{ \gef_{j'}(\lambda)\}_{0\leq \sigma_{j'}\leq \nu_l}$ evaluated at $\pm \nu_l$.
\end{thm}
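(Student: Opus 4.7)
The approach is to start from a Stone-type spectral formula expressing $u(t)$ as an integral of the cut-off resolvent $\chi R(\lambda)\chi$, parametrized by $\mu$ with $\mu^2=\lambda$. The discrete spectrum contributes exactly $u_e$ as in Theorem \ref{thm:t1intro}, since that part of the reduction is unchanged when one refines only the continuous-spectrum analysis. For the continuous part, introduce a smooth partition of unity $\{\psi_l\}$ adapted to the distinct thresholds $\nu_l$, with $\supp \psi_l$ a neighborhood of $\nu_l$ whose radius is comparable to the spacing $c_Y \nu_l^{-N_Y}$ from \eqref{eq:distinct}; the partition is well defined precisely because this spacing is bounded below.

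Near $\nu_l$, the cut-off resolvent admits an expansion in integer and half-integer powers of $\mu-\nu_l$, the half-integer powers arising from the factors $\sqrt{\mu^2 - \sigma_{j'}^2}$ in the $j'$-th channel of the resolvent on the cylindrical end for those $j'$ with $\sigma_{j'}=\nu_l$. The coefficients of this expansion are the generalized eigenfunctions $\gef_{j'}(\pm\nu_l)$ for the open channels $\sigma_{j'}\leq \nu_l$ and their $\lambda$-derivatives. Inserting this expansion into the Stone integral, multiplying by $\psi_l$, and applying stationary phase produces, for each $k=0,\ldots,k_0-1$, a term of the form $t^{-1/2-k}(e^{it\nu_l}b_{l,k,+} + e^{-it\nu_l}b_{l,k,-})$, where $b_{l,k,\pm}$ is a finite linear combination of pairings of $f_1,f_2$ with such derivatives. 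The polynomial growth in $r$ of the $\gef_{j'}(\pm\nu_l)$ on the cylindrical end, visible already in the polynomial coefficients $p_{j,k,B},q_{j,k,B}$ of the half-cylinder model \eqref{e:ujasyh}, accounts for the weight $\langle r\rangle^{1/2+2k_0+\epsilon}$ appearing in the conclusion.

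The remainder after subtracting these terms is an oscillatory integral whose amplitude is a $C^{k_0}$ function of $\mu$ away from the thresholds, with $\mu$-derivatives controlled by the polynomial cut-off resolvent bound of Section \ref{s:two} combined with Lemmas \ref{l:usecauchyestimates1} and \ref{l:usecauchyestimates2}; integrating by parts $k_0$ times against $e^{\pm it\mu}$ then yields the $O(t^{-k_0})$ bound. The hard part will be maintaining this estimate uniformly in the threshold index $l$: each derivative produced by the Cauchy-estimate lemmas costs a factor of order $\nu_l^{N_Y}$, since the disk on which Cauchy's formula is applied has radius only of order $c_Y\nu_l^{-N_Y}$, and one must absorb both this loss and the polynomial growth of the resolvent itself against the rapid decay in $\mu$ produced by repeatedly integrating by parts against the compactly supported smooth initial data. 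Getting this trade-off to close is what also produces the summability statement $\sum_l \|\chi b_{l,k,\pm}\|_{L^2(X)}<\infty$ and fixes the precise weight $\langle r\rangle^{1/2+2k_0+\epsilon}$.
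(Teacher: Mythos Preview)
Your outline matches the paper's proof of Theorem \ref{thm:waveexp2} (of which Theorem \ref{thm:intro2} is the special case $H=-\Delta$ on the given examples): Stone's formula, the smooth threshold partition $\psi_l(\lambda)=\psi(\nu_l^{\tilde N}(\lambda^2-\nu_l^2))$, stationary phase near each $\nu_l$ in the local coordinate $\tau_j$ (your ``half-integer powers of $\mu-\nu_l$''), integration by parts away from thresholds, and the Cauchy-estimate Lemmas \ref{l:usecauchyestimates1}--\ref{l:usecauchyestimates2} for uniformity in $l$.

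One clarification on the mechanism you invoke at the end: the needed decay in the spectral parameter $\lambda$ does not come from ``integrating by parts against the compactly supported smooth initial data.'' Rather, one writes $R(\lambda)f=(\lambda^2+M_0)^{-m}R(\lambda)(H+M_0)^m f$ via the functional calculus (equation \eqref{eq:Qmidentity}); since $f\in C_c^\infty(X)$ one has $(H+M_0)^m f\in L^2$ for every $m$, and the factor $(\lambda^2+M_0)^{-m}$ then absorbs the polynomial losses from both the resolvent bound and the $\nu_l^{N_Y}$ losses in the Cauchy estimates. The integration by parts is in $\lambda$ (or $\tau_j$) and produces decay in $t$, not in $\lambda$. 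With this correction your sketch is the paper's argument.
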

For further details about how the $b_{l,k,\pm}$ are determined, see
Theorem \ref{thm:waveexp2} and  Lemma \ref{l:lcont} and its proof.
We note, however, that for general initial data and $k>0$
we do not expect the 
sums $\sum_{l=1}^\infty b_{l,k,\pm}$ to be finite sums; compare, for 
example the Dirichlet half-cylinder
case  (\ref{e:ujasyh}) and (\ref{e:pj1qj1h}).
Comparing Theorem \ref{thm:t1intro} gives, for $l\in \Natural$
$$b_{l,0,\pm}=\frac{\sqrt{\nu_l}}{4\sqrt{2\pi}}\sum_{\sigma_j=\nu_l}\gef_j(\sigma_j)
\left( 
e^{\pm i\pi/4} \langle f_1,\gef_j(\sigma_j)\rangle +\frac{1}{i\nu_l}e^{\pm i \pi/4}\langle f_2,\gef_j(\sigma_j)\rangle\right).$$

As in Theorem \ref{thm:t1intro}, because of the smoothness of the initial data we can instead bound $\|\chi u_{r,k_0}(t)\|_{H^m(X)} \le C t^{-k_0}$
with the constant depending on $m$ as well as $\chi$ and the initial data, and the series $\sum_{l=1}^\infty \|\chi  b_{l,k,\pm}\|_{H^m(X)}$
converges for each value of $m$.

Theorems \ref{thm:t1intro} and 
\ref{thm:intro2}, and their more general versions, 
Theorems \ref{thm:wavedecay} and  
\ref{thm:waveexp2},  require high energy bounds on the norm of the cut-off resolvent.  The bounds are
generally proved under some conditions on the trapping, as we have discussed 
earlier in the introduction.  However, for a general manifold with
infinite cylindrical ends, without a bound on
the high energy behavior of the resolvent 
or any restrictions on the trapping, we can find 
an asymptotic expansion of $\chi \psi_{sp}(-\Delta )u(t)$, provided that $\psi_{sp}\in C_c^\infty(\Real)$, see Proposition \ref{p:speccutoffexp}.  Here, as before, 
$u(t)$ is the solution of (\ref{e:cylcauchy}), and we must assume the
initial data have support in a fixed compact set.

The literature of the study of local energy decay under the assumptions of no trapping
or mild trapping is quite large, and we mention only a few papers.
The study of local energy decay for nontrapping perturbations of the Laplacian
on Euclidean space was initiated by Morawetz in \cite{Mo61} and continued in, for example,
\cite{ LaMoPh62,MoRaSt, vainberg}.  The question of wave expansions or 
wave decay on noncompact manifolds with various kinds of ends and different
trapping assumptions is a very active area of research;
see \cite{zw16} and references therein for some more recent results.  The 
most closely related results of which we are aware are 
for  solutions to the wave equation on a planar waveguide without forcing
\cite{Lyf76} and  
with forcing \cite{HeWe}, where
the expansion is found to order $o(1)$.  For energy decay of
solutions to a dissipative 
wave equation on a waveguide, see \cite{MaRo}.

Our results build on studies of the spectral theory of manifolds with
cylindrical ends, in particular we mention \cite{goldstein, Lyf76, gui89,
tapsit, ace, Pa95}.  More recent papers include \cite{Chr02,IsKula,MuSt, RiTi13} and 
references therein.
We give more precise references as they are used.


\subsection{Notation} In this section we collect, for reference, some notation introduced in Section \ref{ss:bb}: 

\begin{itemize}
\item $H$ is an operator (generalizing the Laplacian on a manifold with cylindrical ends) on a Hilbert space $\mch$.
\item $H_Y$ is a
nonnegative operator (generalizing the Laplacian on the cross sections of the cylindrical ends)  on a Hilbert space $\mch_Y$.
\item
$\{ \sigma_j^2\}_{j \geq 0}$ are the eigenvalues of $H_Y$, repeated with multiplicity, with $0\leq \sigma_0\leq \sigma_1\leq \sigma_2\leq \cdots$.  
This set may be finite or infinite.
\item
$\{ \phi_j\}$ are a complete orthonormal set of eigenfunctions of $H_Y$
 with $H_Y\phi_j=\sigma_j^2 \phi_j$.
\item
$\{ \nu_l^2\}_{l\geq 1}$ are the {\em distinct}, {\em positive} eigenvalues of $H_Y$  with $0<\nu_1<\nu_2< \cdots$.  
\end{itemize}

And here is some notation introduced elsewhere in the paper:

\begin{itemize}
\item $f_1,f_2$ are initial data; see (\ref{e:cylcauchy}) and (\ref{eq:usolution}).
\item $\tau_j(\lambda) = (\lambda^2 - \sigma_j^2)^{1/2}$, with $\Im \tau_j>0$ when $\Im \lambda>0$; see \eqref{e:taudef}.
\item The resolvent $R(\lambda) = (H - \lambda^2)^{-1}\colon \mch \to \mch$ is originally defined when $\Im \lambda >0$, but the cutoff resolvent $\chi R(\lambda)\chi$ continues meromorphically to the Riemann surface $\zhat$, which is a cover of $\mathbb C$ with covering map $\pr$; see Section~\ref{ss:r}.
\item $\Pre$ is the orthogonal projection onto the span of the eigenfunctions of $H$, if any; see Section~\ref{ss:r}.
\item  $\gef_j(\lambda)$ are generalized eigenfunctions (also sometimes called scattering solutions, or distorted plane waves) of $H$: they obey $H\Phi_j(\lambda)= \lambda^2 \Phi_j(\lambda)$ and also satisfy further conditions; see (\ref{eq:gef}).
\item
$\{ \eta_\ell\}$ are orthonormal eigenfunctions of $H$ with eigenvalue 
$\ev_\ell$: $H \eta_\ell = \ev_\ell \eta_\ell$; see Section~\ref{ss:eigenfunctions}.
\end{itemize}

Throughout the paper, $C$ denotes a positive constant whose value may change
from line to line.

\section{Black box scattering on cylinders}
\label{ss:blackbox}

The results we 
shall prove about local wave expansions
 are valid for a large class of ``black box" perturbations of 
the Laplacian on a manifold with infinite cylindrical ends, with or without boundary, provided that there are appropriate 
high-energy bounds on the cut-off resolvent. 
In fact, the natural setting for our work is more general still, requiring only that `infinity is one-dimensional' in a sense we make precise below. This more abstract setting also allows low regularity (as in some of the examples from \cite{ch-da3}) and other settings, like quantum graphs.

 We adapt the idea of \cite{sj-zw} (see also \cite[Chapter 4]{dy-zw})
of a  compactly supported black box perturbation of the Laplacian on Euclidean space to give a definition of a compactly supported black box 
 perturbation of the Laplacian on a manifold with
 infinite cylindrical ends, or more generally of an operator on $L^2(\mathbb R_+;\mathcal H_Y)$, where $\mathcal H_Y$ is any Hilbert space. 
We collect a number of results related to the resolvent, spectrum, generalized
eigenfunctions and spectral measure of such operators.  
We give a mostly self-contained presentation of these results, though
many can be found in \cite{gui89,tapsit,Pa95,ace} in less general settings.

\subsection{Black box operators} 
\label{ss:bb}
In this section we give the main definitions and assumptions we will use throughout the paper, and illustrate them with examples, focusing first on the simpler case of complete manifolds without boundary, and then on the more complicated one where there is a boundary. We conclude with some further examples.

\subsubsection{Manifolds without boundary}\label{sss:mannob}

Our basic example of a black box operator is the Laplacian on a manifold with infinite cylindrical ends and no boundary, as in Sections \ref{ex:non1} and \ref{ex:3fun1}. Namely, let $(X,g)$ be a complete Riemannian manifold without boundary, having an open subset $X_\infty$ (the infinite cylindrical ends) with the following properties: $X \setminus X_\infty$ is bounded, and
\[(X_\infty,g\restrict_{X_\infty}) = ((0,\infty)_r \times Y, dr^2 + g_Y),\]
where $(Y, g_Y)$ is a compact manifold without boundary. Then let 
\begin{equation}\label{e:basex}
\begin{split}
&\mch = L^2(X), \qquad \mch_0 = L^2(X\setminus X_\infty), \qquad \mch_Y = L^2(Y),\\
& H = -\Delta \text{ be the Laplacian on }X, \qquad \mcd = H^2(X),\\
& H_Y = -\Delta_Y  \text{ be the Laplacian on }Y, \qquad \mcd_Y = H^2(Y).
\end{split}
\end{equation}
In what follows we will use the notation on the left hand sides of \eqref{e:basex} in a more general way, but the reader should keep in mind this more concrete setting.

\subsubsection{Definitions, notation, and assumptions}\label{s:bbdefnot}
 Let $\mch$ be a complex Hilbert space with orthogonal decomposition 
\[
 \mch= \mch_0\oplus L^2(\mathbb R_+; \mch_Y),
\]
 where $\mch_0$ and $\mch_Y$ are separable Hilbert spaces.  
 We use $r$ as the coordinate for $\mathbb R_+ = (0,\infty)$. Let $H$ be a self-adjoint operator on $\mch$ with dense domain $\mathcal D$. Let $H_Y$ be a  self-adjoint operator on $\mathcal H_Y$ with dense domain $\mathcal D_Y$.
 
 Let ${\bbbone}_0$ and  ${\bbbone}_\infty$
 denote the orthogonal projections ${\bbbone}_0:\mch\rightarrow \mch_0$ and
 $\bbbone_\infty:\mch\rightarrow  L^2(\mathbb R_+; \mch_Y)$.
 
If $g \in C([0,\infty))$  and 
$f\in \mch$, then 
by $g f$ we mean 
\begin{equation}\label{eq:chiconvention} \chi f=g(0) \bbbone_0 f+ g \bbbone_\infty  f.
\end{equation}
Hence $f\in g \mch$ means $f = g h$ for some $h \in \mch$.
We will most often use this for $g = \chi \in C_c^\infty([0,\infty))$ and $g(r)=\langle r \rangle^{p}=(1+r^2)^{p/2}$.


Assume that $H$ is lower semi-bounded and that $\bbbone_{0}(H+i)^{-1}$ is compact. Assume that $H_Y \ge 0$ and that $H_Y$ has no essential spectrum.

Let $\{\phi_0, \, \phi_1, \dots\}$ be an orthonormal basis of eigenfunctions of $H_Y$, such that $H_Y \phi_j = \sigma_j^2 \phi_j$, with $0 \le \sigma_0 \le \sigma_1 \le \cdots$. Beginning in \S\ref{s:tta}, we will also use $0<\nu_1<\nu_2<\cdots$ to denote the positive, distinct elements in the sequence $0 \le \sigma_0 \le \sigma_1 \le \cdots$. Thus, the $\sigma$'s are the square roots of \textit{all} the eigenvalues of $H_Y$ \textit{with} multiplicity, and the $\nu$'s are the square roots of the \textit{positive} eigenvalues of $H_Y$ \textit{without} multiplicity.

If $ \dim \mch_Y < \infty$, then these sequences terminate.  This occurs 
 if and only if the operator $H_Y$ is bounded.

We identify elements $u \in L^2(\mathbb R; \mch_Y)$ with sequences $(u_j) \in \ell^2(\mathbb N_0; L^2(\mathbb R_+))$ using the above basis:
\[
u(r) = \sum_{j=0}^\infty u_j(r) \phi_j,
\]
where, if  $ \dim \mch_Y < \infty$, then once again the sequences must terminate.
Let
\begin{equation}\label{e:dinfdef}
\mcd_\infty = \left\{(u_j) \in \ell^2(\mathbb N_0; L^2(\mathbb R_+))  \mid (-u_j'' + \sigma_j^2u_j) \in \ell^2(\mathbb N_0; L^2(\mathbb R_+))  \right\}.
\end{equation}
Assume that $\bbbone_\infty \mcd \subset \mcd_\infty$,
and  ${\bbbone }_\infty H = -\partial_r^2 + H_Y$ 
in the sense that if $f\in\mcd$, then 
$\bbbone_\infty H f=(- \partial_r^2 + H_Y)\bbbone_\infty f$.  
Assume that if 
$f\in  \mcd_\infty$ and $f\restrict_{r\leq \epsilon}=0$ for some $\epsilon>0$,
then $f\in \mcd$.



\subsubsection{Examples with boundary.}

We can modify the basic examples of Section \ref{sss:mannob} to allow $X$ and $Y$ to be open manifolds, if we impose appropriate boundary conditions.

The simplest such example is the half cylinder of Section \ref{s:hprod}. In addition to the Dirichlet and Neumann boundary conditions considered there, we could just as well take more general self-adjoint boundary conditions.

We now turn to our primary example  with $\partial
Y \not =\emptyset$, where $X$ is isometric to a waveguide. Here by a waveguide we mean  a 
domain $\Omega\subset \Real^d$  which 
has a subset $\Omega_\infty\subset \Omega$
so that the closure of
$\Omega \setminus \Omega_\infty$ is compact and $\Omega_\infty$ 
is a ``cylindrical end.''  That is, $\Omega_\infty=\bigcup_{l=1}^L\Omega_l$,
and, for $l=1,\dots,L$, 
$\Omega_l$ can, under a rigid motion of $\Real^d$, be identified
with $(0,\infty)\times U_l$, with $U_l\subset \Real^{d-1}$ a 
bounded domain.  In our earlier notation, $Y=\sqcup_{l=1}^LU_l$.
The metrics on $\Omega$ and $Y$ come from restriction of the usual 
Euclidean metric. Then put $\mch = L^2(\Omega)$, 
$\mch_0 = L^2(\Omega\setminus \Omega_\infty)$, and $\mch_Y = L^2(Y)$.

To consider the Dirichlet Laplacian on $\Omega$, we set $H=-\Delta$, where $\Delta$ is the  Euclidean Laplacian here, and   $\mcd=\{f\in H^1_0(\Omega): \Delta f \in L^2(\Omega)\}$; see e.g. \cite[Section~8.2]{Tay2}. Similarly $H_Y = -\Delta_Y$ and 
$\mcd_Y=\{g\in H^1_0(Y): \Delta_Y g \in L^2(Y)\}$. Then 
\[
\mcd_\infty = \left\{f \in H^1\left( \Omega_\infty\right) \mid \Delta f \in L^2\left(\Omega_\infty \right) \text{ and } f \restrict_{ (0,\infty)\times \partial U_l} = 0\right\},\]
and the desired inclusions involving $\mcd$ and $\mcd_\infty$
follow.  That 
$\bbbone_{\mch_0}(H+i)^{-1}$ is compact follows from e.g. 
\cite[Theorem~6.9]{Borth}, and the desired properties of $H_Y$ can be obtained by e.g. taking  an open cube $Q$ with 
$Y\subset Q$ and using domain monotonicity of eigenvalues as in e.g.  \cite[Theorem~6.20]{Borth}.

Notice that for the treatment of Dirichlet boundary conditions on $\Omega$ we did not require any regularity on the boundary $\partial \Omega$. With appropriate regularity assumptions,
the Laplacian on a domain or manifold 
with Neumann (or certain more general) boundary
conditions can also be put into the above black box framework; see e.g. 
\cite[Section~8.2]{Tay2} and \cite[Section~6.4]{Borth}.

\subsubsection{Other examples} 
We have already mentioned our primary examples of 
black box operators motivated by the Laplacian on a
manifold with infinite cylindrical ends, but we 
 briefly outline some further examples. 

We begin with an example in which the space $\mch_Y$ is finite-dimensional.
Consider the operator $-\partial_r^2+V$ on 
$L^2(\Real)$, where $V$ is a ``steplike''  potential, 
as studied in, for example \cite{CoKa,Chr06,DA-Se}, and others.  For us,
this means $V\in L^\infty(\Real;\Real)$, and there is an $R>0$ and 
constants $V_\pm$ so that $V(r)=V_{\pm}$ 
when $\pm r>R$.  Here $\mch_Y=\Complex^2=\mcd_Y$ with the 
usual inner product, and if $f=(f_+,f_-)\in \mch_Y$, then $H_Y f=(V_+ f_+,V_-f_-)$.
Thus we must make the assumption that $V_\pm \geq 0$ in 
order that the condition that $H_Y\geq 0$ be satisfied.  The
hypotheses we make on $V$ are 
 more restrictive than those of \cite{CoKa}.  In a similar fashion,
matrix-valued Schr\"odinger operators on $\Real$ with steplike potentials
can be put in this framework, provided the matrix-valued potential satisfies
 analogous
conditions.

Another example with $\mch_Y$ finite-dimensional is provided by a 
Schr\"odinger operator on a quantum graph $X$ which is the union of a finite
part and a finite number of half-lines. 
 Let $\mathcal E$ be the set of edges,
and let $\mathcal{E}=\mathcal{E}_K\cup\mathcal{E}_\infty$,
where each edge $e\in \mathcal{E}_K$  has finite
length $l(e)$, and each edge in $\mathcal{E}_\infty$ can
be identified with $(0,\infty)$.  We assume each
set $\mathcal E_K$ and $\mathcal E_\infty$ is finite.
Then
$\mch = \bigoplus_{e \in \mathcal E_K} L^2(0, \ell(e))\bigoplus_{e\in \mathcal E_\infty}L^2(0,\infty).$

  If we impose Neumann--Kirchoff boundary conditions at the vertices, 
then $\mcd$ is the set of functions $f\in \mch$ whose restrictions to all edges are in $H^2$, which are continuous across vertices, 
and which satisfy, at each vertex $v$,
 $\sum_{e\in \mathcal{E}(v)}\frac{\partial f}{\partial r_e}(v)=0$. Here 
$\mathcal{E}(v)$ is the set of edges meeting 
at vertex $v$, $r_e$ is the coordinate on edge $e$, and the derivatives
are taken in directions away from the vertex.  Now let $V\in L^\infty(X;\Real)$
have compactly supported (distributional) derivative, and suppose $V\geq 0$ 
outside of a compact set.  Then $H$ is given by $-\partial_{r_e}^2+V(r_e)$ on 
each edge. 
See e.g.  \cite[Chapter 8]{Borth} and \cite[Chapter 8]{EK} for more on quantum graphs.
 In this example, $\mch_Y=\Complex^N$, where $N$ is the number
of edges in $\mathcal E_\infty$ and the operator $H_Y$
is determined in a manner similar to the step-like Schr\"odinger case
described above, by the values of $V$ ``near infinity.''

A possibility with $\mch_Y$ infinite-dimensional
 is to let $Y$ be a finite quantum graph. 
Then using notation as in the previous example, $\mathcal E=\mathcal E^K$ and 
$\mch_Y = \bigoplus_{e \in \mathcal E} L^2(0, \ell(e)).$  We can let $H_Y$ 
be the operator $-\partial_y^2$ on each edge, with domain $\mcd_Y$
 obtained by imposing
Neumann-Kirchoff boundary conditions as described above.
Then let $X=\Real_+ \times Y$, $\mch =L^2(X)$, 
$V\in L^\infty_c(X;\overline{\Real_+})$, and $H=-\partial_r^2+H_Y+V$, with 
$\mcd=\mcd_\infty\cap \{ (u_j)\in \ell^2(\Natural_0;H_0^1(\Real_+)\}$,
giving the Dirichlet boundary condition at $r=0$.


Note that if the operator $H$ with domain $\mcd\subset \mch$ satisfies the
general 
conditions listed in Section 
\ref{s:bbdefnot}, so does the operator $H+m^2$ for any $m\in \Real$,
by replacing $H_Y$ by $H_Y+m^2$ (and hence replacing
$\sigma_j^2$ by $\sigma_j^2+m^2$).  This implies that Theorems
 \ref{thm:wavedecay} and \ref{thm:waveexp2}
apply to  the Klein-Gordon equation as well as the wave equation.

\subsection{The resolvent}\label{ss:r}
 Now let $H$ be an operator as described above, and for $\Im \lambda>0$, $\Re \lambda \ne 0$, define
the resolvent
\[
R(\lambda)=(H-\lambda^2)^{-1}: \mch\rightarrow \mch.
\]

We recall and extend some results (proved in  \cite{gui89},
 \cite[sections 6.7-6.10]{tapsit}, \cite[Section 2]{ace},
and \cite{Pa95} for manifolds with cylindrical ends) on the behavior of $R(\lambda)$ and of its meromorphic continuation, focusing on the region near the real axis and on the consequences for the  spectral measure. 

We use the following model resolvent. For $\Im \lambda>0$, let $R_0(\lambda)$ be the resolvent for $-\partial_r^2 + H_Y$ with Dirichlet boundary condition at $r=0$, that is to say
\[
(-\partial_r^2 + H_Y - \lambda^2) u = f \Longleftrightarrow u = R_0(\lambda) f,
\]
for $f \in L^2(\mathbb R_+; \mch_Y)$ and $u \in \mcd_\infty$ such that $u\restrict_{r=0} = 0$, where $\mcd_\infty$ is defined in \eqref{e:dinfdef}. Explicitly, for $\Im \lambda>0$, $\sigma_j^2\in \spec(H_Y)$, let
\begin{equation}\label{e:taudef}
\tau_j(\lambda)= (\lambda^2-\sigma_j^2)^{1/2},
\end{equation}
where we take the square root to have positive imaginary part. 
Then for any $(f_j) \in \ell^2(\mathbb N_0; L^2(\mathbb R_+))$ (recalling sequences
terminate if $\dim \mch<\infty$),
\begin{equation}\label{e:r0def}
R_0(\lambda) \sum_{j\geq 0} f_j(r) \phi_j =  \sum_{j\geq 0}\frac i {2 \tau_j(\lambda) }  \int_0^\infty\left(e^{i\tau_j(\lambda)|r-r'|} -e^{i\tau_j(\lambda)(r+r')} \right) f_j(r')dr' \phi_j.
\end{equation}

From this formula we see that the spectrum of $-\partial_r^2+  H_Y$, with Dirichlet boundary condition at $r=0$, is absoultely continuous and given by $[\sigma_0^2,\infty)$. 
However, the multiplicity of the 
spectrum changes at the points $\sigma_j^2\in \spec(H_Y)$, because, for any $\lambda^2 \ge 0$, the number of summands in \eqref{e:r0def} which are not bounded $L^2(\mathbb R_+) \to L^2(\mathbb R_+)$ equals the number values of $j$ such that $\sigma_j^2 \le \lambda^2$.
We call the points $\{\sigma_j^2\}$ thresholds.  When using $\lambda^2$ as
a spectral parameter, we shall abuse terminology a bit and refer to the 
points $\{ \pm \sigma_j \}$ as thresholds as well. 

In the remainder of Section \ref{ss:r} we will derive, among other things, the following description of the spectrum of $H$. The essential spectrum of $H$ is given by the same continuous spectrum $[\sigma_0^2,\infty)$, with the same thresholds and multiplicities. The discrete spectrum of $H$ is countable (possibly empty) with infinity being the only possible accumulation point.

We begin with the upper half plane, where $H$ may have only discrete  spectrum.

\begin{lemma}\label{l:uhp}
The resolvent $R(\lambda)$ is meromorphic for $\Im \lambda >0$.
\end{lemma}

This is fairly clear from our assumptions on $H$. However, 
we give a detailed proof based on (\ref{e:vodevphys}), an identity due to Vodev
\cite{VodevMathNach2014}, which will also be useful in our analysis later. In \cite{VodevMathNach2014} the identity is stated only for Schr\"odinger operators  on $\Real^d$.  However, it in fact holds in far greater
generality for operators which are, in an appropriate sense, compactly supported perturbations of each other.  The identity  (\ref{e:vodevphys}) is a version
adapted to our setting.
 
\begin{proof}
Let $\chi_1 = \chi_1(r)\in C_c^{\infty}([0,\infty))$ be $1$ near $r=0$. Let $\lambda$ and $\lambda_0$ have positive imaginary parts and nonzero real parts. To relate $R(\lambda)$ and $R_0(\lambda)$ we start with
$$R(\lambda)(1-\chi_1)\colon L^2(\mathbb R_+; \mch_Y)\rightarrow \mch,$$
which we write as
$$R(\lambda)(1-\chi_1)= R(\lambda)(1-\chi_1)(-\partial_r^2 + H_Y -\lambda^2)R_0(\lambda).$$
But since $(-\partial_r^2 + H_Y)(1-\chi_1) = H(1-\chi_1)$, we may 
write
\begin{align}\label{eq:QQ01}
R(\lambda)(1-\chi_1)& = R(\lambda)\{(H-\lambda^2) (1-\chi_1)+ [\chi_1,\partial_r^2]\}R_0(\lambda)\nonumber \\ & 
= \{(1-\chi_1)-R(\lambda)[\partial_r^2,\chi_1] \}R_0(\lambda).
\end{align}
Likewise
\begin{equation}\label{eq:QQ02}
(1-\chi_1)R(\lambda_0)= R_0(\lambda_0)\{(1-\chi_1)+[\partial_r^2,\chi_1]R(\lambda_0)\}.
\end{equation}

On the other hand, by the resolvent identity we have
\begin{align} \label{eq:V1}
R(\lambda)-R(\lambda_0)& = (\lambda^2-\lambda_0^2) R(\lambda) R(\lambda_0)\nonumber \\
 & = (\lambda^2-\lambda_0^2) \left( R(\lambda ) \chi_1 (2-\chi_1)R(\lambda_0)+ R(\lambda)(1-\chi_1)^2R(\lambda_0)\right).
\end{align}

Inserting \eqref{eq:QQ01} and \eqref{eq:QQ02} into \eqref{eq:V1} gives
\begin{equation}\label{e:vodevphys}
\begin{split}
R(\lambda)-R(\lambda_0)= &(\lambda^2-\lambda_0^2)\Big(R(\lambda) \chi_1(2-\chi_1)R(\lambda_0)
\\
&+ \{ (1-\chi_1)-R(\lambda)[\partial_r^2,\chi_1]\} R_0(\lambda)R_0(\lambda_0)
\{ (1-\chi_1)+[\partial_r^2,\chi_1]R(\lambda_0)\}\Big).
\end{split}
\end{equation}
Bringing the $R(\lambda)$ terms to the left, the remaining terms to the right, and factoring, gives
\[
R(\lambda)(I + K(\lambda,\lambda_0)) = F(\lambda,\lambda_0),
\]
for any $\lambda$ and $\lambda_0$ with positive imaginary parts and nonzero real parts, where
\[
K(\lambda,\lambda_0) = (\lambda_0^2 - \lambda^2)\Big(\chi_1(2-\chi_1)R(\lambda_0) - [\partial_r^2,\chi_1]R_0(\lambda)  R_0(\lambda_0) \{ (1-\chi_1)+[\partial_r^2,\chi_1]R(\lambda_0)\}\Big),
\]
and
\[
F(\lambda,\lambda_0) = R(\lambda_0) + (\lambda^2-\lambda_0^2)(1-\chi_1) R_0(\lambda)  R_0(\lambda_0) \{ (1-\chi_1)+[\partial_r^2,\chi_1]R(\lambda_0)\}.
\]
For $\lambda_0$ fixed and $\lambda$ sufficiently close to $\lambda_0$, we can invert $I + K(\lambda,\lambda_0)$ using a Neumann series. Observe that $K(\lambda,\lambda_0)$  is compact, because of our assumption that  $\bbbone_{0}(H+i)^{-1}$ is compact; see e.g. \cite[Lemma~4.2]{dy-zw}. Moreover $\lambda \mapsto K(\lambda,\lambda_0)$ continues analytically to the upper half plane. Consequently, by the analytic Fredholm theorem (see e.g. \cite[Theorem C.8]{dy-zw}), 
\begin{equation}\label{e:rkf}
\lambda \mapsto R(\lambda) = F(\lambda,\lambda_0)(I+K(\lambda,\lambda_0))^{-1},
\end{equation}
continues meromorphically to the upper half plane, for any fixed $\lambda_0$ with positive imaginary part and nonzero real part.
\end{proof}

The statement of Lemma \ref{l:uhp} is equivalent to the statement that the essential spectrum of $H$ is contained in $[0,\infty)$. Its proof can be easily adapted to show that the essential spectrum of $H$ equals $[\sigma_0^2,\infty)$. More specifically, if $\sigma_0 >0$, then use the fact that $R_0(\lambda)$ continues analytically to $\mathbb C \setminus \{\lambda \in \mathbb R \mid |\lambda|\ge \sigma_0\}$ to show that the essential spectrum of $H$ is contained in $[\sigma_0^2,\infty)$. To show the reverse containment, reverse the roles of $R(\lambda)$ and $R_0(\lambda)$ in the proof. 

To study the essential spectrum in more detail, we use  the minimal complete Riemann surface to which the functions $\tau_j$ (defined by \eqref{e:taudef}) continue analytically for each $\sigma_j^2 \in \spec(H_Y)$. We denote this Riemann surface by $\zhat$.  We use the term \textit{physical space} to refer to a certain copy of the upper half plane $\{\Im \lambda>0\}$ in $\zhat$, defined as follows: if $\sigma_0 =0$, then the physical space is the unique copy of the upper half plane on which $\Im \tau_j(\lambda)>0$ for each $\sigma_j^2 \in \spec(H_Y)$, and if $\sigma_0>0$, then there are two such copies, and we let the physical space be one of them (it doesn't matter which one). Throughout this paper we will identify the upper half plane in $\mathbb C$ with the physical space in $\zhat$, and $\mathbb R$ in $\mathbb C$ with the boundary of the physical space in $\zhat$.

We define a
projection
$\pr: \zhat \rightarrow \Complex$ as follows:  When $\lambda$ lies in the 
physical space, we put
$\pr(\lambda)=\lambda$, and for general $\lambda \in \zhat$, 
extend $\pr(\lambda)$ by analytic continuation.  
Then $\pr$ is an countably-sheeted covering map which is regular over 
$\mathbb C \setminus  \{\pm \sigma_j \colon j\in\Natural_0,\;\sigma_j>0\}$, and which has points of ramification of order two over each 
distinct $\pm \sigma_j$ with $\sigma_j >0$. If $\sigma_0 = 0$, then $\pr = \tau_0$. Regardless of the value of $\sigma_0$, the physical space is the unique preimage of the upper half plane in $\mathbb C$ under $\pr$ on which $\Im \tau_j(\lambda)>0$ for each $\sigma_j^2 \in \spec(H_Y)$. The global structure of $\zhat$ is complicated, but we shall only need to work with 
$\lambda\in \zhat$ in a neighborhood of $\Real$. For more about $\zhat$, see e.g. \cite[Section~6.7]{tapsit}

From the explicit formula \eqref{e:r0def}, we see that, for any $\chi = \chi(r) \in C_c^{\infty}([0,\infty))$, the cut-off resolvent $\chi R_0(\lambda) \chi$ continues analytically to $\zhat$. The physical space is that preimage of the upper half plane in $\mathbb C$ under $\pr$ on which $R_0(\lambda)$ is bounded $\mch \to \mch$. We now prove meromorphic continuation of $\chi R(\lambda) \chi$ to the same Riemann surface.

\begin{lemma}\label{l:mercont}
For any $\chi = \chi(r) \in C_c^\infty([0,\infty))$ the cut-off resolvent $\chi R(\lambda) \chi$ continues meromorphically to $\zhat$. 
\end{lemma}

\begin{proof}
The proof is straightforward
 because the preliminary work has been done in the proof of Lemma~\ref{l:uhp}.

Without loss of generality we may assume that $\chi$ is $1$ near $r=0$. Fix $\chi_1 = \chi_1(r) \in C_c^\infty([0,1))$, also $1$ near $r=0$, so that $\chi\chi_1=\chi_1$. Then, multiplying \eqref{e:rkf} on the left and right by $\chi$, we obtain
\begin{equation}\label{e:rfkc}
\chi R(\lambda) \chi = \chi F (\lambda,\lambda_0) (I+K(\lambda,\lambda_0) )^{-1}\chi =  \chi F (\lambda,\lambda_0)\chi (I+K(\lambda,\lambda_0)\chi )^{-1},
\end{equation}
for $\lambda_0$ fixed with positive imaginary part and nonzero real part, and for $\lambda$ sufficiently close to $\lambda_0$, where for the second equality we used the Neumann series for $(I+K)^{-1}$ and $\chi K = K$. From the fact that $\chi R_0(\lambda) \chi$ continues analytically to $\zhat$, and the resolvent identity $(\lambda^2 - \lambda_0^2)R_0(\lambda)R_0(\lambda_0) = R_0(\lambda) - R_0(\lambda_0)$, we see that $\lambda \mapsto \chi F (\lambda,\lambda_0)\chi $ and $\lambda \mapsto K(\lambda,\lambda_0)\chi$ both continue analytically to $\zhat$. Hence, by the analytic Fredholm theorem, $\chi R(\lambda) \chi$ continues meromorphically to $\zhat$.
\end{proof}

We now refine Lemma \ref{l:mercont} when $\Im \lambda = 0$, that is to say on the boundary of the physical space. Our arguments here mostly follow 
\cite[Propositions 6.27 and 6.28]{tapsit}.  We denote by $\Pre_{\lambda_0}\colon \mch \to \mch$ the orthogonal projection onto the eigenspace of $H$ at $\lambda_0^2$, with the convention that if $\lambda_0^2$ is not an eigenvalue of $H$, then $\Pre_{\lambda_0}=0$. 

\begin{lemma}\label{l:laurentreal}
Let $\lambda_0 \in \mathbb R$ and $\epsilon >0$. 

\begin{itemize}
\item If $\lambda_0$ is not a threshold, then there exists $B(\lambda)\colon \langle r \rangle^{-1/2-\epsilon}\mch \to \langle r \rangle^{1/2+\epsilon}\mch$, continuous with respect to $\lambda$, such that 
\begin{equation}\label{e:laurentnnotthresh}
R(\lambda) = - \frac {\Pre_{\lambda_0}}{\lambda^2 - \lambda^2_0} + B(\lambda),
\end{equation}
when  $\Im \lambda \ge 0$ and $\lambda$ is sufficiently close to $\lambda_0$.

\item If $\lambda_0$ is a threshold, that is to say $\lambda_0 = \sigma_j$ for some $\sigma_j^2 \in \spec(H_Y)$, then there exist $A_0, \ B(\lambda)\colon \langle r \rangle^{-1/2-\epsilon}\mch \to \langle r \rangle^{1/2+\epsilon}\mch$, with $A_0$ independent of $\lambda$ and $B$ continuous with respect to $\lambda$,  such that 
\begin{equation}\label{e:laurentthresh}
R(\lambda) = - \frac {\Pre_{\lambda_0}}{\tau_j(\lambda)^2} + \frac{A_0} {\tau_j(\lambda)} + B(\lambda),
\end{equation}
when $\Im \lambda \ge 0$ and $\lambda$ is sufficiently close to $\lambda_0$.
\end{itemize}
\end{lemma}

Before giving the proof, we clarify what we mean by $Hf$ if $f$ lies in a weighted space, rather than $\mch$.  Let $g(r)=\langle r \rangle ^p$ 
or $g(r)=e^{pr}$ for some $p\in \Real$.   
Suppose $f\in g \mcd$, with $\bbbone_{\infty}f=\sum f_j \phi_j$.  Then
$(f_j),\; (-f_j''+\sigma_j^2f_j )\in \ell^2(\Natural_0;g L^2(\Real_+))$.
If $\chi=\chi(r) \in C_c^\infty(\Real_+)$ is $1$ in a neighborhood of $r=0$,
then $H(1-\chi)f\in g \mch$ is given by
$H(1-\chi)f=\sum  (-\partial_r^2+\sigma_j^2)((1-\chi)f_j) \phi_j.$
Moreover,
 $H f= H\chi f + H(1-\chi)f.$

\begin{proof}
We use a more elaborate version of \eqref{e:rfkc}. Fix $\chi_0, \ \chi_1 \in C_c^\infty([0,1))$, both $1$ near $r=0$, such that $\chi_0\chi_1=\chi_1$. Then,
abbreviating $K(\lambda,\lambda_0)$ to $K$ and using $\chi_0 K = K$, we have
\[
(I-K(1-\chi_0)) (I+K) = I + K\chi_0 \quad \Longrightarrow \quad  (I+K)^{-1} = (I + K\chi_0)^{-1} (I-K(1-\chi_0)),
\]
so that 
\begin{equation}\label{e:rfkchi0}
 R(\lambda)  = F (\lambda,\lambda_0) (I + K(\lambda,\lambda_0)\chi_0)^{-1} (I-K(\lambda,\lambda_0)(1-\chi_0)),
\end{equation}
for $\lambda$ and $\lambda_0$ in the upper half plane, away from any poles of $R$. By  the explicit formula \eqref{e:r0def}, we see that $\lambda\mapsto R_0(\lambda) \colon \langle r \rangle^{-1/2-\epsilon}\mch \to \langle r \rangle^{1/2+\epsilon}\mch$ extends continuously from the physical space to its boundary. Hence $\lambda\mapsto  (I-K(\lambda,\lambda_0)(1-\chi_0)) \colon \langle r \rangle^{-1/2-\epsilon}\mch \to \langle r \rangle^{-1/2-\epsilon}\mch$ and $\lambda\mapsto  F(\lambda,\lambda_0) \colon \langle r \rangle^{-1/2-\epsilon}\mch \to \langle r \rangle^{1/2+\epsilon}\mch$ also extend continuously. By the analytic Fredholm theorem, 
$\lambda\mapsto  (I+K(\lambda,\lambda_0)\chi_0)^{-1} \colon \langle r \rangle^{-1/2-\epsilon}\mch \to \langle r \rangle^{-1/2-\epsilon}\mch$ also extends continuously, except possibly on a discrete set where is has poles.

To describe these poles, we use the fact that if $\Im \lambda>0$, then $\|R(\lambda)\|^{-1}$ equals the distance from $\lambda^2$ to the spectrum of $H$, where $\| \cdot \|$ denotes the operator norm $\mch \to \mch$. In particular,
\begin{equation}\label{e:resuhp}
0 < \Im \lambda \le |\Re \lambda| \Longrightarrow \|R(\lambda)\| \le 1/(2 \Im \lambda |\Re \lambda|). 
\end{equation}
Hence, if $\lambda_0 \in \mathbb R$ is not a threshold, then there exist $A \colon \mch \to \mch$ and $B(\lambda) \colon \langle r \rangle^{-1/2-\epsilon}\mch \to \langle r \rangle^{1/2+\epsilon}\mch$ such that
\[
R(\lambda) =  \frac {A}{\lambda^2 - \lambda_0^2} + B(\lambda),
\]
when $\Im \lambda \ge 0$ and $\lambda$ is sufficiently close to $\lambda_0$.
Composing on the left with $H-\lambda^2$, and matching powers of $\lambda^2 - \lambda^2_0$ as $\lambda \to \lambda_0$, gives
\begin{equation}\label{e:hab}
(H-\lambda_0^2)A = 0, \qquad I = - A  + (H-\lambda_0^2)B(\lambda_0),
\end{equation}
which implies $A = - \Pre_{\lambda_0}$. Indeed, the first of \eqref{e:hab} implies that the image of $A$ is contained in the image of $\Pre_{\lambda_0}$, so that $\Pre_{\lambda_0}A=A$. Then apply $\Pre_{\lambda_0}$ to the second of \eqref{e:hab} and use $\Pre_{\lambda_0} (H-\lambda_0^2) = 0$ to  see that $\Pre_{\lambda_0} = - \Pre_{\lambda_0} A$.

If $\lambda_0 = \sigma_j$, then near $\lambda_0$ we must use the coordinate $\tau_j(\lambda)$ to describe the possible pole there, and it follows from \eqref{e:resuhp} that  there are operators $A \colon \mch \to \mch$ and $A_0, \ B(\lambda) \colon  \langle r \rangle^{-1/2-\epsilon}\mch \to \langle r \rangle^{1/2+\epsilon}\mch$ such that
\[
R(\lambda) = \frac {A}{\tau_j(\lambda)^2} + \frac{A_0} {\tau_j(\lambda)} + B(\lambda),
\]
when $\Im \lambda \ge 0$ and $\lambda$ is sufficiently close to $\lambda_0$. Composing on the left with $H-\lambda^2$, and matching powers of $\tau_j(\lambda)$ as $\lambda \to \sigma_j$, gives \eqref{e:hab} and hence $A = - \Pre_{\lambda_0}$.
\end{proof}

Let $\Pre$ denote the orthogonal projection onto the span of the eigenfunctions of $H$, if any.  Then $R(\lambda) \Pre$ continues meromorphically from the physical space to $\zhat$ (or even just to $\mathbb C$), and  for any
$\chi = \chi(r) \in  C_c^\infty([0,\infty))$, 
$\chi R(\lambda)(I-\Pre)\chi$ is continuous for $\lambda$
 on the boundary of the 
physical space, except, perhaps, at the thresholds $\{ \pm \sigma_j\mid \sigma_j^2\in 
\spec(H_Y)\}$. We will further 
describe the possible singularities at the thresholds in Section~\ref{s:threshold}.

\subsection{The generalized eigenfunctions}\label{ss:gef}

To describe $R(\lambda)$ in more detail, we define a family of generalized
eigenfunctions $\gef_j$ of $H$ depending on the parameter
$\lambda\in \zhat$, though we shall be most interested in them for $\lambda\in\Real$, that is, $\lambda $ on the boundary
of the physical space. Recall that
 $\{ \phi_0, \; \phi_1,\dots\} \subset \mcd_Y$ is a complete orthonormal set of  eigenfunctions
of $H_Y$, with $ H_Y\phi_j=\sigma_j^2\phi_j$.
Let $\chi = \chi(r) \in C_c^\infty ([0,\infty))$ be $1$ for $r\leq 1$, and set
$$\gef^\infty_j(\lambda)=
e^{-i\tau_j(\lambda)r}\phi_j\in C^\infty(\mathbb R_+; \mathcal H_Y).$$
Now, for $\lambda$
in the physical space and 
away from poles of the resolvent 
we define a function 
$\gef_j = \gef_j(\lambda)\in \langle r\rangle ^{1/2+\epsilon }e^{\Im \tau_j(\lambda)r} \mcd$:
\begin{equation}\label{eq:gef}
\gef_j(\lambda)= (1-\chi) \gef_j^\infty(\lambda)  
- R(\lambda) [\partial_r^2,\chi]\gef_j^\infty(\lambda),\; \text{for}\; \Im \lambda >0.
\end{equation}
Then (in the sense of the comment after the statement of Lemma~\ref{l:laurentreal})
\begin{equation}\label{eq:innullspace}
(H-\lambda^2)\gef_j(\lambda)=0.
\end{equation}
It is easy to see that when $\Im \lambda>0$ and $\lambda^2$ is not an eigenvalue of $H$ then $\gef_j(\lambda)$ is independent of the choice of $\chi \in C_c^\infty([0,\infty))$ which is $1$ for $r \le 1$: If $\gef_j$, $\tilde{\gef}_j$ are
defined as in (\ref{eq:gef}) with two different such functions $\chi,\; \tilde{\chi}$, then $\gef_j - \tilde{\gef}_j \in \mch$  is in the null space of $H-\lambda^2$, and hence is $0$ by necessity.  The
functions $\gef_j$ have a meromorphic
continuation to $\zhat$ (with the weighted space to which they belong 
depending not only on the index $j$ but also on the point
in the Riemann surface $\zhat$), which we continue to denote in the same
way.  Moreover, we shall show below that
 $\gef_j(\lambda)$ is a continuous function of $\lambda$ when 
 $\lambda$ is on the boundary of the physical space and 
$|\lambda|\geq \sigma_j$.  When $\lambda$ is in the boundary of the physical space
and $\sigma_j\geq |\lambda|$, $\gef_j(\lambda)\in \langle r \rangle^{1/2+\epsilon}\mch$ for any $\epsilon>0$.

From (\ref{eq:gef}), away from poles of the resolvent we have
\begin{equation}\label{eq:endexp}
\left[\bbbone_\infty \gef_j(\lambda)\right](r)=  e^{-i\tau_j(\lambda)r} \phi_j +\sum _{\sigma_m^2\in \spec(H_Y)} \tS_{mj}(\lambda)  e^{i\tau_m(\lambda)r} \phi_m,
\end{equation}
for some functions $\tS_{mj}(\lambda)$ which 
 determine the scattering matrix.   
 For each $\lambda$ away from poles of the resolvent the 
series in (\ref{eq:endexp}) converges absolutely, uniformly for $r$ varying in a compact set, as do its derivatives with respect to $r$ and its `derivatives' with respect to $H_Y$.  For 
$\lambda \in \Real$ (ie., on the boundary of the physical space) with $|\lambda|>\sigma_j$ and away from the thresholds and 
the  poles of the resolvent, one can 
equivalently define $\gef_j(\lambda)$ to be the element of $\langle r \rangle ^{1/2+\epsilon}\mch$ which satisfies $(H-\lambda^2)\gef_j=0$ and 
which has an expansion of the form (\ref{eq:endexp}) for some $\tS_{mj}$. Note that in 
the expansion  (\ref{eq:endexp}), the terms with $\sigma_m>|\lambda|$ are exponentially decaying;
these correspond to evanescent modes.  Those with $\sigma_m<|\lambda|$ correspond to outgoing 
propagating modes.  

Both the generalized eigenfunctions $\gef_j$ and the functions $S_{mj}$ 
are meromorphic functions on $\zhat$, as can be seen from
(\ref{eq:gef}).  In particular, near a threshold
corresponding to $\sigma_k$, both $\gef_j$ and $S_{mj}$ are locally meromorphic functions
of $\tau_k(\lambda)$.




We give a self-contained proof of the following lemma, which is
known (at least in specific cases) but perhaps not explicitly in the literature.

\begin{lemma}\label{l:gefcont} If $\lambda_0\in \Real$ and $|\lambda_0|\geq \sigma_j\geq 0$,
then  $\gef_j(\lambda)$ and 
$S_{mj}(\lambda)$ 
are continuous at $\lambda_0$.
\end{lemma}
We comment that there is no restriction on the size of $\sigma_m$ compared
to $|\lambda_0|$.
\begin{proof} We give the proof for $\lambda_0\geq\sigma_j$, as the proof
for $\lambda_0\leq -\sigma_j$ is similar.

 From \cite[(3.4)]{Pa95}, or \cite[Lemma 1.2]{ace},
\begin{equation}\label{eq:smsum}\sum_{0\leq \sigma_m \leq \lambda}\tau_m(\lambda)
S_{mj}(\lambda)\overline{S}_{mk}(\lambda) = \tau_j(\lambda)\delta_{jk},\; \text{if} \; 
0\leq  \sigma_j,\sigma_k\leq \lambda.
\end{equation}
In particular, this implies 
$$\sum_{0\leq \sigma_m \leq \lambda}\tau_m(\lambda)
|S_{mj}(\lambda)|^2 =\tau_j(\lambda),\; \text{if}\; 0\leq \sigma_j\leq\lambda.$$
Thus $S_{mj}(\lambda)(\tau_m(\lambda)/\tau_j(\lambda))^{1/2}$ is bounded for $\lambda\geq \sigma_m, \sigma_j$.
Since $S_{mj}$ 
 is meromorphic on $\zhat$,  $S_{mj}(\lambda)$
 is actually continuous in this region. Thus far we have proved $S_{mj}(\lambda)$
has no poles if $0\leq \max(\sigma_j,\sigma_m)\leq \lambda$.

Now suppose there is a $\lambda_0\geq \sigma_j\geq 0$ so that $\gef_j$ has
a pole at $\lambda_0$.  Let $\sigma_{k_0}$ be any minimizer of $\{|\sigma_k-\lambda_0|:\ \mid \sigma_k^2\in \spec(H_Y)\}$, and use the coordinate $\tau_{k_0}(\lambda)$ near $\lambda_0$.
Suppose that, with respect to this coordinate $\gef_j(\lambda)$, has a pole of order $p_0>0$
at $\lambda_0$.  In particular this implies 
\begin{equation}\label{eq:sing}
f(\lambda)\defeq \gef_j(\lambda)(\tau_{k_0}(\lambda))^{p_0}
\end{equation}
is analytic with respect to the coordinate 
$\tau_{k_0}(\lambda)$ near $\lambda_0$, and $f(\lambda_0)$ is 
nontrivial. Since we have already shown $S_{mj}(\lambda)$ is regular at $\lambda_0$
if $0\leq \max(\sigma_j,\sigma_m)\leq \lambda_0$, this ensures
by (\ref{eq:endexp}) that  $f(\lambda_0)\in \mch$.
Using (\ref{eq:innullspace}), we have that $(H-\lambda_0^2)f(\lambda_0)=0$, so that $f(\lambda_0)$ is an eigenfunction
of $H$ with eigenvalue $\lambda_0^2.$

  Let $\Pre_{\lambda_0}:\mch\rightarrow \mch$ 
denote orthogonal projection
onto the span of the eigenfunctions of $H$ with eigenvalue $\lambda_0^2$.
Note that if $\eta\in \mch$ is an
eigenfunction of $H$ with eigenvalue $E$ and $\sigma_l^2 \le E$, then
\begin{equation}\label{e:etaphi}
\langle \bbbone_\infty \eta , \phi_l \rangle_{\mch_Y}(r)=0.
\end{equation}  
Hence
$\Pre_{\lambda_0}  [\partial_r^2,\chi]\gef_j^\infty(\lambda )=0$ for any $\lambda$.  
This in turn means 
$$f(\lambda_0)=\lim_{\lambda \rightarrow \lambda_0:\; \Im \lambda>0}f(\lambda) =\lim_{\lambda\rightarrow \lambda_0,\; \Im \lambda>0}(\tau_{k_0}(\lambda))^{p_0}\Pre_{\lambda_0}R(\lambda) 
[\partial_r^2,\chi]\gef_j^\infty(\lambda )=0$$ 
since $R(\lambda)$ commutes with $\Pre_{\lambda_0}$ when $\lambda$ 
is in the physical space.  But this contradicts
that $f(\lambda_0)$  is nontrivial.  Hence $\gef_j(\lambda)$ does not 
have a pole at $\lambda_0$. 
The expansion (\ref{eq:endexp}) then shows 
that all the $S_{mj}(\lambda)$ are regular at $\lambda_0$.
Hence $\Phi_j$ and $S_{mj}$ are continuous at $\lambda_0$.
\end{proof}


 This
lemma implies that if
$\sigma_k\geq \sigma_j$ and $\delta_0>0$ is sufficiently small, then
$\gef_j(\lambda)$ is a smooth function of $\tau_k(\lambda)$ on 
$ (\pm \sigma_k-\delta_0, \pm \sigma_k+\delta_0)$.




\subsection{The spectral measure}
The spectral measure for $(I-\Pre)H$ can be written in terms of the generalized
eigenfunctions $\{\gef_j\}$.
  When $H$ is the Laplacian on a manifold with infinite cylindrical ends
(or in fact on a more general $b$-manifold), 
 Lemma \ref{l:specmeas} below follows from the results of \cite[Section 6.9]{tapsit}
and \cite[Section 2]{ace}.  (See particularly \cite[(2.2)]{ace} and the 
end of the proof of Lemma 2.5.  See also
\cite[Section 5]{Lyf75}.) 

  Below  we use the notation 
$$(g\otimes h)f= g\langle f, h\rangle_{\mch}.$$
We remark that if $ f \in \langle r \rangle^{-p}\mch$ and $h\in \langle r \rangle^p\mch$ then
we can make sense of $\langle f, h\rangle_{\mch}$ 
by writing $\langle f, h\rangle_{\mch} = \langle \langle r \rangle^p f, \langle r \rangle^{-p} h\rangle_{\mch} $.
  Recall that 
when $\lambda\in \Real$, 
$R(\lambda)=R(\lambda+i0)$, where $\lambda+i0$ is a (particular) 
point on the boundary of the physical space.

The next lemma will be used to give an expression for the 
part of the  spectral 
measure corresponding to the continuous spectrum of $H$.
In order to prove it, we use another version of Vodev's identity.  Let 
$\chi_1=\chi_1(r)\in C_c^\infty(\Real_+)$
be $1$ in a neighborhood of the origin,
and let $\chi =\chi(r)\in C_c^\infty(\Real_+)$,
with $\chi \chi_1=\chi_1$.  Then
for $\lambda$ in the physical space, starting
with  (\ref{e:vodevphys}),
  using $\lambda^2-\lambda_0^2=\pr^2(\lambda)-\pr^2(\lambda_0)$ and $(\lambda^2 - \lambda_0^2)R_0(\lambda)R_0(\lambda_0) = R_0(\lambda) - R_0(\lambda_0)$ and 
multiplying on both the left and right by $\chi$
gives
\begin{equation}\label{e:vodev}
\begin{split}
\chi R&(\lambda)\chi- \chi R(\lambda_0)\chi= (\pr^2(\lambda)-\pr^2(\lambda_0))\chi 
R(\lambda)\chi \chi_1(2-\chi_1)\chi R(\lambda_0)\chi \\
&+ (1-\chi_1-\chi R(\lambda)\chi[\partial_r^2,\chi_1]) \left( \chi R_0(\lambda)\chi-\chi R_0(\lambda_0)\chi \right) (1-\chi_1+[\partial_r^2, \chi_1]\chi R(\lambda_0)\chi).
\end{split}
\end{equation}
The identity holds on all of $\zhat$ by meromorphic continuation. 

\begin{lemma}\label{l:specmeas}  Let 
$\chi = \chi(r) \in C_c^\infty([0,\infty))$. 
 Then
 for $\lambda \in \Real$, 
$\lambda \not = \pm \sigma_k$, $k\in \Natural_0$, we have
\begin{equation}\label{eq:Hresolvediff}
\frac{1}{i}\chi[R(\lambda)-R(-\lambda)](I-\Pre)\chi= \frac{1}{2} 
\sum_{0\leq \sigma_j^2\leq \lambda^2} \frac{1}{\tau_j(\lambda)}\chi\gef_j(\lambda) \otimes \gef_j(\lambda)\chi.
\end{equation}
\end{lemma}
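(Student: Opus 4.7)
The plan is to apply Vodev's identity (Lemma~\ref{l:vodev}) with the choice $\lambda_0 = -\lambda$.  Since $\pr^2(\lambda) = \pr^2(-\lambda) = \lambda^2$ for $\lambda \in \Real$, the first term on the right-hand side of Lemma~\ref{l:vodev} vanishes, leaving
\[
\chi R(\lambda)\chi - \chi R(-\lambda)\chi = A(\lambda)\bigl(\chi R_0(\lambda)\chi - \chi R_0(-\lambda)\chi\bigr) B(-\lambda),
\]
with $A(\lambda):=1-\chi_1-\chi R(\lambda)\chi[\Delta_{X_\infty},\chi_1]$ and $B(-\lambda):=1-\chi_1+[\Delta_{X_\infty},\chi_1]\chi R(-\lambda)\chi$.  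For real $\lambda \neq \pm \sigma_k$ the relation $R(-\lambda)^{*}=R(\lambda)$ (the adjoint interchanges the two physical boundary values of the meromorphic continuation of the resolvent across the cuts) together with $[\Delta_{X_\infty},\chi_1]^{*}=-[\Delta_{X_\infty},\chi_1]$ gives $B(-\lambda) = A(\lambda)^{*}$, so the right-hand side becomes a quadratic form built from $A(\lambda)$.

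Next I would compute $\chi R_0(\lambda)\chi - \chi R_0(-\lambda)\chi$ by separation of variables.  The Dirichlet resolvent in the $j$th mode on $X_\infty$ has kernel $\tau_j(\lambda)^{-1}\sin(\tau_j(\lambda) r_<)e^{i\tau_j(\lambda) r_>}$.  Closed channels ($\sigma_j>|\lambda|$) satisfy $\tau_j(-\lambda)=\tau_j(\lambda)$ and drop out of the difference, while open channels ($\sigma_j\le |\lambda|$) satisfy $\tau_j(-\lambda)=-\tau_j(\lambda)$, and the resulting product-to-sum simplification yields
\[
\chi R_0(\lambda)\chi - \chi R_0(-\lambda)\chi = \sum_{\sigma_j^{2}\le \lambda^{2}}\frac{2i}{\tau_j(\lambda)}(\chi\psi_j)\otimes(\chi\psi_j),\qquad \psi_j(r,y):=\sin(\tau_j(\lambda)r)\phi_j(y).
\]

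The third step is to recognize the propagated tensors as $\gef_j$.  Writing $\psi_j=\tfrac{i}{2}(u_j^{-}-u_j^{+})$ with $u_j^{\pm}:=e^{\pm i\tau_j(\lambda)r}\phi_j$, a direct computation using $\chi\chi_1 = \chi_1$ (and taking $\chi_1$ as the cutoff in (\ref{eq:gef})) gives $A(\lambda)\chi u_j^{-}(\lambda)=\chi\gef_j(\lambda)$, while the analogous computation with $u_j^{+}$ produces $A(\lambda)\chi u_j^{+}(\lambda)=\chi\gef_j^{+}(\lambda)$, where $\gef_j^{+}(\lambda):=(1-\chi_1)u_j^{+}-R(\lambda)[\Delta_{X_\infty},\chi_1]u_j^{+}$ satisfies $(H-\lambda^{2})\gef_j^{+}=0$ and has a purely outgoing end expansion (no incoming modes $u_k^{-}$).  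A Sommerfeld-type uniqueness argument shows that for $\lambda\in\Real\setminus\{\pm\sigma_k\}$ that is not associated with an eigenvalue of $H$ such an outgoing solution must vanish, and the result extends to $\lambda^{2}\in\spec_{p}(H)$ by continuity in $\lambda$ once one observes that $(R(\lambda)-R(-\lambda))\Pre=0$, since the discrete part of the resolvent depends on $\lambda$ only through $\lambda^{2}$; consequently the $(I-\Pre)$ on the right of the lemma statement absorbs any residual contribution from $\gef_j^{+}$.

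Assembling the pieces one has $A(\lambda)\chi\psi_j \equiv \tfrac{i}{2}\chi\gef_j(\lambda)$ modulo $\Pre\mch$, and the scalar identity $(\tfrac{i}{2}f)\otimes(\tfrac{i}{2}f)=\tfrac14(f\otimes f)$ combines with the prefactor $2i/\tau_j$ to yield the stated $\tfrac{1}{2\tau_j(\lambda)}\chi\gef_j\otimes\gef_j\chi$ after dividing by $i$.  The hardest step will be the Sommerfeld-type uniqueness for $\gef_j^{+}$ in the black-box setting, and the careful interplay with the projection $(I-\Pre)$ through eigenvalues; the other steps are essentially book-keeping and direct computation.
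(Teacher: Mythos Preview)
Your approach coincides with the paper's: apply Vodev's identity with $\lambda_0=-\lambda$, compute the Dirichlet resolvent jump explicitly on $X_\infty$, and identify each sandwiched rank-one term with $\chi\gef_j(\lambda)\otimes\gef_j(\lambda)\chi$.  Your explicit observation $B(-\lambda)=A(\lambda)^*$ is correct and makes the two-sided structure transparent; the paper leaves this implicit and simply asserts, citing (\ref{eq:gef}) and (\ref{eq:endexp}), that
\[
\gef_j(\lambda)=(1-\chi_1)\gef_j^0(\lambda)-R(\lambda)[\Delta_{X_\infty},\chi_1]\gef_j^0(\lambda),\qquad \gef_j^0=u_j^--u_j^+,
\]
which is exactly your identification once the $u_j^+$ contribution is shown to vanish.

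The one place you work harder than necessary is that last point.  A Sommerfeld/Rellich uniqueness argument in the black-box setting is valid but avoidable: for $\Im\lambda>0$ the function $(1-\chi_1)u_j^+=(1-\chi_1)e^{i\tau_j(\lambda)r}\phi_j$ already lies in $\mch$ (since $\Im\tau_j>0$), satisfies $(H-\lambda^2)[(1-\chi_1)u_j^+]=[\Delta_{X_\infty},\chi_1]u_j^+$, and hence equals $R(\lambda)[\Delta_{X_\infty},\chi_1]u_j^+$ whenever $\lambda^2\notin\spec_p(H)$.  Thus $\gef_j^+(\lambda)\equiv 0$ in the physical space, and since $\chi\gef_j^+$ is meromorphic on $\hat Z$ it vanishes on the real boundary as well.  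This replaces your ``hardest step'' by a one-line continuation argument; the extension through embedded eigenvalues by continuity is then exactly as you and the paper describe.
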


Note that by Lemma \ref{l:laurentreal}, 
$$\chi[R(\lambda)-R(-\lambda)](I-\Pre)\chi = 
\chi[R(\lambda)-R(-\lambda)]\chi.$$
We include the $(I-\Pre)$ here to emphasize that this combination does not have 
poles arising from the embedded eigenvalues of $H$.

\begin{proof} 
Without loss of generality we may assume that $\chi$ is real valued, and that $\chi(r) = 1$ for $r\leq 2$. Choose $\chi_1\in C_c^{\infty}([0,\infty))$, also real valued,
so that $\chi_1\chi=\chi_1$ and $\chi_1=1$ for $r\leq 1$.  We shall use 
(\ref{e:vodev}).  
 We identify points on the open upper half plane (with $\lambda$ as the
parameter) with the 
physical space of $\zhat$.  Thus 
$\lambda>0$ corresponds to approaching the spectral parameter $\lambda^2$ from the upper half
plane, and $\lambda<0$ corresponds to approaching the spectral parameter $\lambda^2$ from the lower 
half plane.  

Recall that in \eqref{e:r0def} we defined  $R_0(\lambda)$ to be the resolvent for $-\partial_r^2 + H_Y$ with Dirichlet boundary conditions at $r=0$. By explicit computation,
\begin{equation}\label{eq:Q0}
R_0(\lambda)-R_0(-\lambda) =\frac{i}{2}\sum _{0\leq \sigma_j \leq \lambda} \frac{1}{\tau_j(\lambda)} 
\gef_j^0(\lambda) \otimes \gef_j^0(\lambda)
\end{equation}
where 
\begin{equation}\label{eq:gef0}\gef_j^0(\lambda)=\gef_j^0(\lambda,r)  = (e^{-i\tau_j(\lambda) r}- e^{i\tau_j(\lambda) r})\phi_j.
\end{equation}
We remark here that if $|\lambda|>\sigma_j$, then $\gef_j^0(-\lambda)=-\gef_j^0(\lambda)$, while for $|\lambda|<\sigma_j$, $\gef^0_j(-\lambda)= \gef^0_j(\lambda)$, and $\gef_j^0(\sigma_j)=0$.  
From  (\ref{e:vodev}) and (\ref{eq:Q0}), if $\lambda^2$ is not an eigenvalue of $H$,
\begin{align} &
\chi R(\lambda) \chi-\chi R(-\lambda) \chi \nonumber \\ &
= (1-\chi_1-\chi R(\lambda)\chi [\partial_r^2,\chi_1])(\chi R_0(\lambda)\chi-\chi R_0(-\lambda)\chi)
(1-\chi _1+[\partial_r^2,\chi_1]\chi R(-\lambda)\chi)\nonumber \\
& = \frac{i}{2}(1-\chi_1-\chi R(\lambda)\chi [\partial_r^2,\chi_1])\left( \sum _{0\leq \sigma_j \leq \lambda} \frac{1}{\tau_j(\lambda) }
\chi \gef_j^0(\lambda) \otimes \gef_j^0(\lambda)    \chi\right) \nonumber
(1-\chi _1+[\partial_r^2,\chi_1]\chi R(-\lambda)\chi).
\end{align}
Now we claim that 
\begin{equation}\label{eq:alternate}
\gef_j(\lambda)= (1-\chi_1)\gef^0_j(\lambda)-R(\lambda)[\partial_r^2,\chi_1]\gef_j^0(\lambda).
\end{equation}
That (\ref{eq:alternate})
holds can be checked by using the comment following 
(\ref{eq:endexp}).  Alternately, one may use that for $\lambda$ in the
physical space, the functions 
defined in (\ref{eq:gef}) and by the right hand side of (\ref{eq:alternate})
both are in the null space of $H-\lambda^2$, and differ
by an element of $\mch$, and hence must agree 
if $\lambda^2$
not an eigenvalue of $H$.  
The identity then holds for all 
$\lambda$ by analytic continuation.

   This finishes the proof if $\lambda^2$ is not an eigenvalue of $H$.  If $\lambda^2$ is an eigenvalue,
the result follows from the fact that both sides of (\ref{eq:Hresolvediff}) are continuous functions of $\lambda$ away from the thresholds.
\end{proof}
We note that a
 related proof of an analogous result for the Schr\"odinger operator on $\Real$ 
can be found in, for example, \cite[Appendix to XI.6]{r-sv3}.

\subsection{Threshold behavior}\label{s:threshold} We now discuss in more detail the behavior
of the resolvent at thresholds. Even after projecting
away from the eigenfunctions, the resolvent near the threshold
corresponding to $\sigma_j$ may have a singularity which 
is like $1/\tau_j$, and it is this singularity we wish to better understand.
Lemma \ref{l:specmeas} can be combined with a result of \cite{tapsit} to 
obtain the following corollary.  We note that we are giving a somewhat different formulation, and a rather
different proof, of part of \cite[Proposition 6.28]{tapsit} in this more 
general setting.  
\begin{lemma}\label{c:threshold}  For $\lambda $ sufficiently 
near $\pm \sigma_j$, 
$$\chi\left(  R(\lambda) (I-\Pre) - \frac{i}{4\tau_j(\lambda)}
\sum_{l: \sigma_l=\sigma_j} \gef_l(\sigma_j)\otimes \gef_l(\sigma_j) \right) \chi$$
is bounded if $\chi = \chi(r) \in C_c^\infty([0,\infty))$.
Moreover, \begin{equation}\label{eq:pm}
\gef_j(\sigma_j)=\gef_j(-\sigma_j).
\end{equation}
\end{lemma}
\begin{proof} By the Laurent expansion \eqref{e:laurentthresh},
 there is an
operator $A_0$ so that 
$$\chi\left(  R(\lambda) (I-\Pre) -\frac{A_0}{\tau_j(\lambda)} \right)\chi$$
is bounded near $\lambda= \sigma_j$.  Likewise, there is a $B_0$ so that 
$$\chi\left(  R(\lambda) (I-\Pre) -\frac{B_0}{\tau_j(\lambda)} \right)\chi$$
is bounded near $\lambda= -\sigma_j$.  

If $\sigma_j=0$, then trivially $A_0=B_0$.  So suppose temporarily
$\sigma_j>0$.
If $\lambda \in \Real $ with $0<\lambda<\sigma_j$, then 
$\tau_j(-\lambda)=\tau_j(\lambda)$.  Thus we find 
$$\lim_{\lambda \uparrow  \sigma_j}\tau_j(\lambda)\chi(R(\lambda)-R(-\lambda))(I-\Pre)\chi=
\chi(A_0-B_0)\chi.$$  
However, since by Lemma \ref{l:gefcont} $\gef_k(\lambda)$ is continuous at $\lambda=\sigma_j$ if
$\sigma_k\leq \sigma_j$, we find from (\ref{eq:Hresolvediff}) that
$$\lim_{\lambda \uparrow  \sigma_j}\tau_j(\lambda)\chi(R(\lambda)-R(-\lambda))(I-\Pre)
\chi=0.$$  Thus $A_0=B_0$.

Now let $\sigma_j \geq 0$.
If $\lambda \in \Real$, $\lambda>\sigma_j$, then $\tau_j(-\lambda)=
-\tau_j(\lambda)$, so that 
$$\lim_{\lambda \downarrow  \sigma_j}\tau_j(\lambda)\chi(R(\lambda)-R(-\lambda))(I-\Pre)\chi=
\chi(A_0+B_0)\chi= 2\chi A_0\chi.$$  Comparing  (\ref{eq:Hresolvediff}) 
this means $$A_0= \frac{i}{4} \sum_{l:\sigma_l=\sigma_j} 
\gef_l(\sigma_j)\otimes \gef_l(\sigma_j) .$$


Now we turn to proving
(\ref{eq:pm}).  Let $\chi, \chi_1\in C_c^\infty([0,\infty))$ be $1$ for 
$r\leq 1$,
and let $\Pre_{\sigma_j}$ denote projection onto
the eigenfunctions of $H$ with eigenvalue $\sigma_j^2$, if any.
By \eqref{e:laurentthresh},
$$\chi \left( R(\lambda) -\frac{A_0}{\tau_j(\lambda)}+\frac{\Pre_{\sigma_j}}{(\tau_j(\lambda))^2} \right)\chi$$
is bounded for $\lambda$ near $\pm \sigma_j$. By \eqref{e:etaphi} we have  $\Pre_{\sigma_j}[\partial_r^2,\chi_1]\gef_j^0(\lambda)=0$, where $\gef_j^0(\lambda)$ is given by \eqref{eq:gef0}.
  Now use this and  (\ref{eq:alternate}) to find
\[
\chi \Phi_j(\pm \sigma_j) = - \lim_{\lambda \to \pm \sigma_j} \chi  R(\lambda)[\partial_r^2,\chi_1]\gef_j^0(\lambda) =  - \chi A_0 [\partial_r^2,\chi_1] \lim_{\lambda \to \pm \sigma_j} \frac{ \gef_j^0(\lambda)} {\tau_j(\lambda)}  = 2i \chi A_0 [\partial_r^2,\chi_1] r \phi_j.  
\]
\end{proof}


Given $\sigma_{j_0}^2\in \spec(H_Y)$, consider the set
\begin{equation}\label{eq:halfboundstates}
\mathcal{G}_{j_0}\defeq\{ \gef_j(\sigma_j)\mid \; \sigma_j=\sigma_{j_0}\}.\end{equation}
If $\mathcal{G}_{j_0}$
 contains at least one nonzero element, we  say $\pm \sigma_{j_0}$ 
is a  threshold resonance, and the nonzero elements of this set are
 {\em resonant states} associated with $\pm \sigma_j$.
It follows from (\ref{eq:smsum}) that they have 
an expansion on the ends as in (\ref{eq:endexp}).
    The set $\mathcal{G}_{j_0}$ contains a nonzero element if and only
if  $R(\lambda)(I-\Pre) $ has a pole 
at $\sigma_{j_0}$ 
on the boundary of the physical space.  Indeed, we can 
see from Lemma \ref{c:threshold} that if the set $\mathcal{G}_{j_0}$
contains only $0$, then $R(\lambda)(I-\Pre)$ is  continuous at
 $\lambda=\sigma_{j_0}$.  If $\sigma_{j_0}^2$ is a simple eigenvalue of
$H_Y$, then the other direction is immediate.  Otherwise, if
$\sigma_{j_0}^2$ is an eigenvalue of $H_Y$ of multiplicity 
at least two, 
 see \cite[Proposition 6.28]{tapsit} to see that
the singularity at $\sigma_{j_0}$ of $R(\lambda)(I-\Pre)$ is nontrivial.

  The threshold resonant states are analogous to the familiar
half-bound states of Euclidean scattering theory, see e.g. \cite{New02}.

 

\section{Two term wave expansions}\label{s:tta}


Let $H$ be an operator as in Section \ref{ss:blackbox}
and let $u(t)$ be the solution to the wave equation
\begin{equation}\label{eq:usolution}
(\partial^2_t+H)u =0, \;
u(0) = f_1,\; u_t(0)=f_2; 
\end{equation} 
that is, $u(t) =\cos(t\sqrt{H})f_1+ \frac{ \sin(t\sqrt{H})}{\sqrt{H}}f_2.$
Here $f_1, f_2\in \mch$, though
  later we shall impose more stringent 
conditions on $f_1,\; f_2$.

We begin by recalling the contribution of the eigenvalues to $u$. In Section 
\ref{ss:wavedecaythm} we state the main theorem, the two term asymptotics 
result.  In the remainder of Section \ref{s:tta} we give the proof of 
Theorem \ref{thm:wavedecay}.

\subsection{Projection onto the eigenfunctions}\label{ss:eigenfunctions}
We begin by recalling the contribution of the eigenvalues to the behavior of 
$u$.  This requires only that $H$ is self-adjoint.

Let $\{ \ev_\ell\} $ denote the eigenvalues of $H$, repeated with 
multiplicity, with corresponding orthonormal
eigenfunctions $\{ \eta_\ell\}$: $H \eta_\ell= \ev_\ell \eta_\ell$.   For a general black box operator
$H$ the set $\{ \ev_\ell\}$ could be empty, nonempty but finite, or infinite, and examples
are known of each.
However, the assumptions on $H$ which we make in Theorem \ref{thm:wavedecay}  imply that $H$ does not have 
infinitely many eigenvalues.  
On the other hand, \cite{ch-da} contains examples of 
Schr\"odinger operators on a manifold with infinite cylindrical ends which
have finitely many 
embedded eigenvalues but which still have the type of high-energy resolvent estimate which we need for Theorem \ref{thm:wavedecay}.  

The following lemma, however, does not require high energy estimates on the cut-off resolvent, and holds
whether the set of eigenvalues is finite or infinite.
\begin{lemma}\label{l:ueigenvalues} Let $u(t)$ be the solution of 
(\ref{eq:usolution}).   Then, with $u_e(t)=\Pre u(t)$,
\begin{multline}\label{eq:ue}
u_e(t) = \sum_{\substack{\ev_\ell\in \spec_{p}(H) \\
\ev_\ell\not =0}} \eta_\ell \left( \cos((\ev_\ell)^{1/2} t) \langle f_1,\eta_\ell\rangle_{\mch} + 
 \frac{\sin( (\ev_\ell)^{1/2} t)}{(\ev_\ell)^{1/2} } \langle f_2,\eta_\ell\rangle_\mch \right)
\\ + \sum_{\substack{\ev_\ell\in \spec_{p}(H)\\\ev_\ell =0}} \eta_\ell \left( \langle f_1,\eta_\ell\rangle_\mch + t \langle f_2,\eta_\ell\rangle_\mch \right).
\end{multline}
\end{lemma}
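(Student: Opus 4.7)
The plan is to use the spectral decomposition induced by the eigenfunctions $\{\eta_\ell\}$ and reduce the wave equation, after projection, to a countable collection of scalar ODEs that can be solved in closed form.

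First I would observe that, because $H$ is self-adjoint, the orthogonal projection $\Pre$ onto the span of the $L^2$ eigenfunctions commutes with $H$ (this is a standard consequence of the spectral theorem: the eigenspace decomposition is invariant under the functional calculus, hence under $H$ itself on appropriate domains). Consequently, if $u(t)$ solves \eqref{eq:usolution}, then $u_e(t) := \Pre u(t)$ satisfies the same wave equation with initial data $\Pre f_1$ and $\Pre f_2$. In particular, $u_e(t)$ stays in the range of $\Pre$ for all $t$, so one may expand
\[
u_e(t) = \sum_\ell c_\ell(t)\, \eta_\ell, \qquad c_\ell(t) = \langle u_e(t), \eta_\ell\rangle_{\mch} = \langle u(t), \eta_\ell\rangle_{\mch}.
\]

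Next I would derive the ODE satisfied by each coefficient $c_\ell$. Taking the $\mch$-inner product of \eqref{eq:usolution} with $\eta_\ell$, using self-adjointness of $H$ together with $H\eta_\ell = \lambda_\ell \eta_\ell$, gives
\[
c_\ell''(t) + \lambda_\ell\, c_\ell(t) = 0, \qquad c_\ell(0) = \langle f_1, \eta_\ell\rangle_{\mch}, \qquad c_\ell'(0) = \langle f_2, \eta_\ell\rangle_{\mch}.
\]
Solving this elementary second-order constant coefficient ODE case by case yields, for $\lambda_\ell \neq 0$,
\[
c_\ell(t) = \cos\!\left(\lambda_\ell^{1/2} t\right)\langle f_1, \eta_\ell\rangle_{\mch} + \frac{\sin\!\left(\lambda_\ell^{1/2} t\right)}{\lambda_\ell^{1/2}} \langle f_2, \eta_\ell\rangle_{\mch},
\]
interpreting the square root analytically (so that if $\lambda_\ell < 0$ the trigonometric functions become hyperbolic ones); and for $\lambda_\ell = 0$, $c_\ell(t) = \langle f_1, \eta_\ell\rangle_{\mch} + t\langle f_2, \eta_\ell\rangle_{\mch}$.

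Substituting these coefficients back into the eigenfunction expansion and separating the $\lambda_\ell = 0$ terms from the rest yields exactly \eqref{eq:ue}. The only point requiring a word of justification is the convergence and term-by-term differentiation of the series, which is harmless because under the standing assumptions on $H$ (in particular those invoked later in Theorem \ref{thm:wavedecay}) the set of eigenvalues is finite, so the sum has only finitely many terms; in the purely self-adjoint generality where the sum could be infinite, one would instead interpret the series as a strongly convergent sum obtained from the spectral theorem applied to $\cos(t H^{1/2})$ and $\sin(tH^{1/2})/H^{1/2}$, restricted to the pure point subspace. There is no real obstacle here; the lemma is essentially a bookkeeping statement about the functional calculus.
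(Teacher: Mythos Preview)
Your proof is correct and follows essentially the same approach as the paper: use that $\Pre$ commutes with $H$ to reduce to the projected wave equation with data $\Pre f_1$, $\Pre f_2$, then solve explicitly. The paper simply says ``a straightforward computation shows that the explicit expression in \eqref{eq:ue} solves this initial value problem,'' whereas you spell out that computation by expanding in the $\eta_\ell$ and solving the resulting scalar ODEs.
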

\begin{proof}
For the initial data $f_1$, $f_2$ we can write $f_j =\Pre f_j+ (I-\Pre )f_j$.  Then, since $\Pre$ commutes with $H$, 
$\Pre u(t)=u_e(t)$, where $u_e$ satisfies 
\begin{align*}
(\partial^2_t+H)u_e(t)& =0\\ u_e(0)&=\Pre f_1,\\ (\partial_tu_e)(0)&=\Pre f_2.
\end{align*}
Then a straightforward computation shows that the explicit expression in  (\ref{eq:ue}) solves this initial value problem.
\end{proof}

	\subsection{Statement of Theorem \ref{thm:wavedecay}}\label{ss:wavedecaythm}
 
 Let 

\begin{equation}
\begin{split}
 \Cs &\defeq  \Complex \setminus \left( \bigcup_{l>0} \bigcup_{\pm} \{ \pm \nu_l -is, \; s \geq 0 \} \right) = \Complex \setminus \left( \bigcup_{j: \sigma_j>0} \bigcup_{\pm} \{ \pm \sigma_j -is, \; s \geq 0 \} \right) .
\end{split}
 \end{equation}

\begin{figure}[h]
\labellist
\small 
\pinlabel $\nu_1$ [l] at 155 85
\pinlabel $\nu_2$ [l] at 182 85
\pinlabel $\cdots$ [l] at 205 85
\pinlabel $-\nu_1$ [l] at 70 85
\pinlabel $-\nu_2$ [l] at 42 85
\pinlabel $\cdots$ [l] at 20 85
\endlabellist
 \includegraphics[width=10cm]{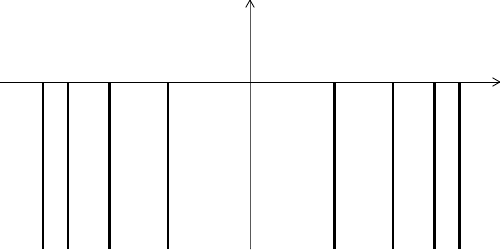} 
 \caption{The set $ \Cs $ is the complex plane with downward half-lines removed at the square roots of the nonzero eigenvalues of $H_Y$.}\label{f:cslit}
\end{figure}

The operator $\chi R(\lambda)\chi$ continues meromorphically
from $\{ \lambda \in \Complex: \Im \lambda>0\}$ to $\Cs$.
In fact, we can identify $\Cs$ with a subset of the Riemann 
surface $\zhat$.  
We use the same notation for the continuation of $R(\lambda) $ to 
$\Cs$, so that when $\lambda \in \Real$, $R(\lambda)= R(\lambda +i0)$.
We note that $\Cs $ does not require a cut at $0$, because if $\sigma_j=0$, then $\tau_j(\lambda)=\tau_0(\lambda)=\lambda$ is
already analytic on $\Complex$.  This does not mean, however, that $R(\lambda)$ must be
analytic at the origin, as it may still have a pole there.

In the following theorem, $u_e$ encodes the contribution of the eigenvalues of $H$ 
as in Lemma \ref{l:ueigenvalues} and $u_{thr}$ is the leading order contribution from the threshold 
resonances (if any).  The expansion for $u_e(t)$ is given in Lemma \ref{l:ueigenvalues}.  Recall that we have assumed that $H$ is lower semibounded.
Here we choose $M_0\in (0,\infty)$ so that $H+M_0>0$.

\begin{thm}\label{thm:wavedecay}  Let $H$ be a black box 
operator as defined in Section \ref{ss:bb},
and suppose that for some $N_1,\; N_2\in [0,\infty)$, $\lambda_0>1$, and any 
$\tilde{\chi} \in C_c^\infty([0,\infty)) $ 
there are $C_0$, $C_1$  
so that $\tilde{\chi} R(\lambda) \tilde{\chi}$ is analytic on the set
\begin{equation}\label{eq:resfree}
\{ \lambda \in \Cs\mid \; \Re \lambda>\lambda_0-1\; \text{and}\; \Im \lambda> -C_0 (\Re \lambda)^{-N_1} \}
\end{equation}
in $\zhat$, and that in this region
\begin{equation}\label{eq:resolveassumption}
\| \tilde{\chi} R(\lambda) \tilde{\chi}\| \leq C_1(1+|\lambda|)^{N_2}.
\end{equation}
 Fix $r_1>0$. For $k=1,\;2$, suppose $f_k\in (H+M_0)^{-m_k}\mch$ 
with $m_k\in \Natural$, $m_k>(N_2+4-k)/2$ and $m_k\geq (N_1+3-k)/2$ .  If the 
set $\{\nu_l\}$ is infinite, assume in addition 
 there are positive constants $N_3, C_2$ and $L_0$ so that 
\begin{equation}\label{e:weyl}
l^{1/N_3}/C_2 \le \nu_l , \text{ for } l \ge L_0
\end{equation} and
$m_k>(N_2+N_3+3-k)/2$ for $k=1,2$.
Suppose $\bbbone_{\infty}f_k$ vanishes for $r>r_1>0$.  
Let $u(t)$ be the solution of  (\ref{eq:usolution}).  Then
$$u(t)= u_e(t)+ u_{thr}(t)+u_r(t),$$ where
$u_e(t)=\Pre u(t)$ has an expansion as given in Lemma \ref{l:ueigenvalues}, and
\begin{multline}\label{eq:thr}
u_{thr}(t)= \frac{1}{4} \sum_{\sigma_j=0} \gef_j(0)\langle f_2,\gef_j(0)\rangle_\mch + 
\frac{1}{2\sqrt{t}}\sum_{\sigma_j>0} \sqrt{\frac{ \sigma_j}{2\pi}}\cos(\sigma_j t+\pi/4) \gef_j
(\sigma_j) \langle f_1,\gef_j(\sigma_j) \rangle_\mch  \\+
\frac{1}{2\sqrt{t}}\sum _{\sigma_j>0}  \frac{1}{\sqrt{2\pi \sigma_j}} \sin(\sigma_j t +\pi/4) \gef_j(\sigma_j) \langle f_2,\gef_j(\sigma_j)\rangle_\mch.
\end{multline}
Moreover, if $\chi\in C_c^\infty([0,\infty))$,
then 
\[
\| \chi(H+M_0)^{1/2}u_r(t)\|_{\mch}+ \| \chi (\partial_tu_r)(t) \|_{\mch} 
\leq Ct^{-1} \sum_{k=1}^2 \|  (H+M_0)^{m_k}(I-\Pre) f_k\|_{\mch},
\]if $t$ is sufficiently large.
\end{thm}
\begin{rmk} Note that it is possible that there are no $\sigma_j$ which
are zero, in which case the first sum in (\ref{eq:thr}) is $0$.
\end{rmk}
\begin{rmk}
We note that our assumptions on $H$ in this theorem ensure that
there are no threshold resonances  larger than $\lambda_0-1$,
or eigenvalues larger than $(\lambda_0-1)^2$.  Hence
 the sums in $u_e$ and $u_{thr}$, see
(\ref{eq:ue}) and (\ref{eq:thr}), are finite.
\end{rmk}
\begin{rmk}  The identity $R(-\overline{\lambda})=R(\lambda)^*$, which is a consequence of the self-adjointness of $H$, and
the consequent 
 symmetry of the resonances mean 
 that $\tilde{\chi} R(\lambda)\tilde{\chi}$ is analytic in the region 
$\{ \lambda \in \Cs\mid \;\Re \lambda<-(\lambda_0-1)\; 
\text{and}\;  \Im \lambda > -C_0(-\Re \lambda)^{-N_1}\}$, and satisfies
$\| \tchi R(\lambda)\tchi\| \leq C_1(1+|\lambda|)^{N_2}$ there.
\end{rmk}

\begin{rmk}\label{rmk:seemweak}
The assumption that there is a resonance-free region of the form (\ref{eq:resfree}) 
 follows from the seemingly weaker assumption that
the bound (\ref{eq:resolveassumption}) on the cut-off resolvent holds
 for $\lambda \in \Real$, $|\lambda|>\lambda_0-1$. 
By \cite[Theorem 5.6]{ch-da} this implies the existence of a resonance-free region of the form (\ref{eq:resfree}) with $N_1=N_2+1$, with a corresponding 
estimate on the cut-off resolvent there.  Note 
that by \cite[Theorem 3.1]{ch-da} and \cite[Sections 3.2, 3.3]{ch-da}
 this bound on the resolvent for  
 $-\Delta_X$ (or $-\Delta_X+V$, for a 
large class of $V\in C_c^\infty(X;\Real)$)
  holds for the examples of manifolds $X$ in Sections \ref{ex:non1} and \ref{ex:3fun1}.  
Moreover, \cite[Theorem 3.1]{ch-da} gives a more general method of constructing
  manifolds  and 
Schr\"odinger operators for which such an estimate holds.  The paper
\cite{ch-da3} includes classes  of domains  $\Omega \subset \Real^d$ for 
which the Dirichlet Laplacian satisfies the conditions of the theorem.
These include the planar waveguides described in Section \ref{ss:waveguides}.
Thus Theorem \ref{thm:t1intro} follows from Theorem \ref{thm:wavedecay}.
\end{rmk}

\begin{rmk} The assumption (\ref{e:weyl}) is a weak substitute
for a Weyl law for the eigenvalues of $H_Y$.  For example, if $(Y,g_Y)$ is 
a smooth compact manifold of dimension $d-1$ and $H_Y=-\Delta_Y$, then
the Weyl law implies
(\ref{e:weyl}) holds with $N_3=d-1$. But if the eigenvalues of $H_Y$ have high multiplicity, then \eqref{e:weyl} may also hold for a smaller value of $N_3$. For example, let
$\beta>0$ be a fixed real number, $\mu_0\in \Natural$, and let $(Y,g_Y)=
\sqcup_{\mu=1}^{\mu_0}(\Sphere^{d-1},\beta g_{\Sphere^{d-1}})$ where $\Sphere^{d-1}$ is the $d-1$-dimensional
unit sphere, and $g_{\Sphere^{d-1}}$ is the usual metric on it. Then $\nu_l = \sqrt{l(d+l-2)/\beta}$ and \eqref{e:weyl} holds with $N_3=1$, regardless of the value of the dimension $d$.
\end{rmk}


Note that without loss of generality we may assume that 
$\chi(r)=1$ for
$r<r_1$, where $\chi$
is as in the statement of the theorem.
  We do so in the remainder of this section.  In particular, this
implies that
$\chi f_k=f_k$, $k=1,2$.

\subsection{Reduction to Propositions \ref{p:Isbd} and \ref{p:Imbd}}
In this section we prove Theorem \ref{thm:wavedecay} modulo the proofs
of two propositions.
We have already found the contribution of the discrete spectrum to $u(t)$
in Lemma \ref{l:ueigenvalues}.  
We use the spectral theorem to write $(I-\Pre) u(t)$ as an integral.
This, in turn, we write as the sum of three integrals depending on the size
of the spectral parameter.  Each of these three will be evaluated
or  bounded using a 
different technique.

\begin{lemma}\label{l:contspec} Let $u(t)$ be the solution of (\ref{eq:usolution}).
Then
$$\left( \begin{array}{c} 
(I-\Pre)u(t) \\
\partial_t(I-\Pre) u(t)\end{array}\right) = \PV \frac{1}{2\pi i} \int_{-\infty}^\infty 
e^{it\lambda} A(\lambda) (R(\lambda)- R(-\lambda)) (I-\Pre) d\lambda\left( \begin{array}{c}f_1\\f_2 
\end{array} \right) $$
where 
\begin{equation}\label{eq:A}
A(\lambda) = \left( \begin{array}{cc} \lambda & -i \\
i\lambda^2  & \lambda
\end{array}\right)
\end{equation}
and $\PV$ is the principal value.
\end{lemma}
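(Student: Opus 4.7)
The plan is to apply the spectral theorem to $(I-\Pre)u(t)$, use Stone's formula to express the spectral measure of $H$ on its continuous spectrum as the jump of $R(\lambda)$ across $[0,\infty)$, and then exploit the oddness of $R(\lambda)-R(-\lambda)$ in $\lambda$ to fold the resulting half-line integral into a single principal-value integral over $\Real$ with kernel $e^{it\lambda}$.

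First, since $\Pre$ commutes with $H$, the projection $(I-\Pre)u$ also solves (\ref{eq:usolution}) with initial data $(I-\Pre)f_\ell$ and lies in the continuous spectral subspace. Passing to the associated first-order system and applying the functional calculus yields
\[
\begin{pmatrix}(I-\Pre)u(t)\\ \partial_t(I-\Pre)u(t)\end{pmatrix}=\int_{0}^{\infty}M(\lambda,t)\,dE(\lambda^2)\,(I-\Pre)\begin{pmatrix}f_1\\ f_2\end{pmatrix},\quad M(\lambda,t)=\begin{pmatrix}\cos(t\lambda)&\lambda^{-1}\sin(t\lambda)\\ -\lambda\sin(t\lambda)&\cos(t\lambda)\end{pmatrix},
\]
where $E$ is the spectral resolution of $H$ restricted to $[0,\infty)$. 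By Stone's formula, combined with the convention from Section~\ref{ss:rgef} that $R(\lambda)$ for $\lambda\gtrless0$ is the boundary value from the upper/lower half plane in $\mu=\lambda^2$, one has $dE(\mu)=\tfrac{1}{2\pi i}[R(\sqrt\mu)-R(-\sqrt\mu)]\,d\mu$ for $\mu>0$. The change of variables $\mu=\lambda^2$, $d\mu=2\lambda\,d\lambda$, then rewrites each entry as a $(0,\infty)$-integral against $\tfrac{1}{2\pi i}[R(\lambda)-R(-\lambda)](I-\Pre)\,d\lambda$.

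Next I would exploit parity. The map $\lambda\mapsto -\lambda$ preserves $\cos(t\lambda)$ and $\lambda^{2k}$ but flips the sign of $\sin(t\lambda)$, $\lambda^{2k+1}$, and $R(\lambda)-R(-\lambda)$. Inserting $\cos(t\lambda)=\tfrac12(e^{it\lambda}+e^{-it\lambda})$ and $\sin(t\lambda)=\tfrac1{2i}(e^{it\lambda}-e^{-it\lambda})$ and discarding the vanishing odd-in-$\lambda$ pieces gives, for example,
\begin{align*}
\int_0^\infty 2\lambda\cos(t\lambda)[R(\lambda)-R(-\lambda)]\,d\lambda &= \int_{-\infty}^\infty e^{it\lambda}\lambda[R(\lambda)-R(-\lambda)]\,d\lambda,\\
\int_0^\infty 2\sin(t\lambda)[R(\lambda)-R(-\lambda)]\,d\lambda &= -i\int_{-\infty}^\infty e^{it\lambda}[R(\lambda)-R(-\lambda)]\,d\lambda,
\end{align*}
and similarly for the two remaining entries; assembling the four identities in matrix form reproduces exactly the multiplier $A(\lambda)$ of (\ref{eq:A}).

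The main obstacle is convergence of the resulting real-line integral, which is why the principal value appears. By Corollary~\ref{c:threshold}, since $\tau_0(\lambda)=\lambda$, the operator $\chi[R(\lambda)-R(-\lambda)](I-\Pre)\chi$ is $O(1/\lambda)$ near $\lambda=0$ whenever there is a nontrivial zero-threshold resonance. The factors $\lambda$ and $i\lambda^2$ in $A(\lambda)$ cancel this pole on the $f_1$-components, but the $-i$ entry acting on $f_2$ does not, so that component converges only after symmetric truncation around $\lambda=0$, i.e. as a principal value (the divergent $1/\lambda$ contribution is odd in $\lambda$ and is killed by the PV). At the nonzero thresholds $\pm\sigma_j$ the singularity $1/\tau_j(\lambda)\sim(\lambda\mp\sigma_j)^{-1/2}$ is locally integrable, and decay at $|\lambda|\to\infty$ follows from $f_\ell\in(H+M_0)^{-m_\ell}\mch$ via the spectral theorem. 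The delicate step is verifying that the PV integral near the origin coincides with the original convergent half-line integral: one isolates the (finite rank) singular part $\tfrac{i}{2\lambda}\sum_{\sigma_l=0}\gef_l(0)\otimes\gef_l(0)$ from Corollary~\ref{c:threshold}, checks that it is odd in $\lambda$ and hence drops out under PV, and passes to the limit in the bounded remainder.
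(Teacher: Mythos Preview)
Your proposal is correct and follows essentially the same route as the paper: Stone's formula for the spectral measure, the change of variables $\mu=\lambda^2$, and the parity argument to unfold the half-line integral into a full-line integral against $e^{it\lambda}$. The paper carries this out component by component for $\cos(t\sqrt{H})$ and $\sin(t\sqrt{H})/\sqrt{H}$ rather than packaging it in the matrix $M(\lambda,t)$ as you do, and it is terser about the PV issue (simply noting that the pole at $0$ from a possible zero-threshold resonance forces the principal value in the $\sin$-term), but the logic is the same. One small remark: the lemma is stated for general $f_1,f_2\in\mch$, not for $f_\ell\in(H+M_0)^{-m_\ell}\mch$, so your appeal to that extra regularity for decay at $|\lambda|\to\infty$ is not available here; the paper implicitly reads the full-line integral via the functional calculus rather than as an absolutely convergent Bochner integral.
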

\begin{proof}
We have
$$u(t) =\cos(t\sqrt{H})f_1+ \frac{ \sin(t\sqrt{H})}{\sqrt{H}}f_2.$$
By the functional calculus and Stone's formula,
\begin{align}
\nonumber 
\cos (t\sqrt{H})(I-\Pre) & = \frac{1}{2\pi i} \int_0^\infty 
\cos(t\sqrt{\tau}) [ (H-\tau -i0)^{-1} -(H-\tau +i0)^{-1}] (I-\Pre) d\tau\\
 & = \frac{1}{2\pi i} \int_0^\infty [e^{it\lambda} + e^{-it\lambda}]
[R(\lambda)-R(-\lambda)](I-\Pre) \lambda d\lambda  \nonumber \\
 & =\frac{1}{2\pi i} \int_{-\infty}^\infty e^{it\lambda} 
[R(\lambda)-R(-\lambda)](I-\Pre) \lambda d\lambda.
\end{align}

As in the statement of Lemma \ref{l:specmeas}, the factor $I-\Pre$ can be omitted in the last 
two instances, but it is needed in the first two.  We continue to often include it even 
when it is unnecessary, both to
emphasize that there are no poles due to 
eigenvalues, and because some later manipulations will require $I-\Pre$.

Similarly 
\begin{align*}
\frac{\sin (t\sqrt{H})}{\sqrt{H}}(I-\Pre) & = \frac{-1}{2\pi }
 \int_0^\infty [e^{it\lambda} - e^{-it\lambda}]
[R(\lambda)-R(-\lambda)](I-\Pre)  d\lambda \\
& = -\PV \frac{1}{2\pi}\int_{-\infty}^\infty e^{it\lambda} [R(\lambda)-R(-\lambda)](I-\Pre)  d\lambda .
\end{align*}
Here we do need the principal value if $H$ has $0$ as a threshold resonance,
since in that case $(R(\lambda)-R(-\lambda))(I-\Pre)$ has a pole of order 
$1$ at $0$.
\end{proof}

We use the integral representation from Lemma \ref{l:contspec} to 
write $(I-\Pre) u(t)$ as the sum of three terms:
\begin{equation}\label{eq:Ismlsum}
 (I-\Pre) \left(\begin{array} {c}u(t) \\
\partial_t u(t)\end{array} \right)   = 
\left(  I_s(t) +I_m(t) +I_l(t) \right) \left( \begin{array}{c} f_1 \\f_2
\end{array} \right) 
\end{equation}
where
\begin{align*}
I_s(t) & =  \PV \frac{1}{2\pi i} \int_{|\lambda|<\lambda_0}
e^{it\lambda} A(\lambda) (R(\lambda)- R(-\lambda)) (I-\Pre) d\lambda\\
I_m(t) & = \frac{1}{2\pi i}  \int_{\lambda_0< |\lambda|< t^{\epsilon} }e^{it\lambda}  A(\lambda) (R(\lambda)- R(-\lambda)) (I-\Pre) d\lambda
\\
I_l(t) & = \frac{1}{2\pi i}\int_{ t^\epsilon< |\lambda| } e^{it\lambda}  A(\lambda) (R(\lambda)- R(-\lambda)) (I-\Pre) d\lambda   .
\end{align*}
Here $\epsilon>0$ is a constant to be determined later
and $\lambda_0 > 1$ is 
as in the statement of
Theorem \ref{thm:wavedecay}.  We shall later add an additional 
assumption on $\lambda_0$ for convenience.
The subscripts $s,\; m,\; l$ stand for small, medium, and
large, and refer to the size of $|\lambda|$.   We shall bound each of these in turn, beginning with the easiest.  Note that for $I_m$, $I_l$ we may omit
the $I-\Pre$ as $H$ has no eigenvalues which are greater than or equal to $\lambda_0^2$.

Recall $M_0\in (0,\infty)$ was chosen so that $H+M_0>0$. 
\begin{lemma}\label{l:Il}
If $\min(2m_1-1,2m_2)\geq 0$, then
\begin{multline}\label{eq:Ilbd}
\left\|\left( \begin{array}{c c}  (H+M_0)^{1/2} & 0 \\
0 & I \end{array} \right) 
  I_l(t)\left( \begin{array}{c c}(H+M_0)^{-m_1} & 0 \\
0 &(H+M_0)^{-m_2} \end{array}\right) \right\|_{\mch \oplus \mch \rightarrow \mch\oplus \mch} \\ =
 O(t^{-\epsilon(\min(2m_1-1, 2m_2))})
\end{multline}
\end{lemma}
\begin{proof}
 We note 
that for $m\in \Natural$, 
\begin{multline}\label{eq:Qmidentity}
\chi (R(\lambda)-R(-\lambda) )\chi= \chi (R(\lambda)-R(-\lambda))(H+M_0)^{-m}(H+M_0)^{m} \chi \\ =
\chi (R(\lambda)-R(-\lambda))(\lambda^2+M_0)^{-m}(H+M_0)^{m} \chi.
\end{multline}

By the spectral theorem, using that for $\lambda\gg 0$, 
$\lambda (R(\lambda) - R(-\lambda))d\lambda$ is, up to a constant multiple, the spectral measure for $H$,
we find for $m,\;j,\;k\in \Integers$
\begin{align*} 
& \left \| (H+M_0)^{j/2}\int_{|\lambda|>t^\epsilon} e^{it\lambda}
\lambda^{k+1} (R(\lambda )-R(-\lambda) )d\lambda (H+M_0)^{-m}\right\|_{\mch \rightarrow \mch} \\ & =
\left \| \int_{|\lambda|>t^\epsilon} e^{it\lambda}\lambda^k  (\lambda^2+M_0)^{-m+j/2}\lambda (R(\lambda) -R(-\lambda) )d\lambda \right\|_{\mch \rightarrow \mch} \\ & 
= 4\pi \sup_{|\lambda |>  t^\epsilon} |\lambda|^k(M_0+\lambda^2)^{-m+j/2} = O(t^{(-2m+k+j)\epsilon})
\end{align*}
if $k+j-2m\leq 0$.  Here we use that a nontrivial spectral projector
has norm $1$.

The lemma follows from this and the expression for $I_l(t)$.
\end{proof}


Evaluating and bounding the contributions of $I_s$ and $I_m$ requires more effort and each will be studied separately.  We state the results here.
\begin{prop}\label{p:Isbd}
Let $f_1,\; f_2, \chi$, $\lambda_0$ and $u_{thr}$ be as  in Theorem 
\ref{thm:wavedecay}.  In addition, suppose $\lambda_0\not = \sigma_j$
for any $j$.
Then there is a constant $C$
(depending on $\chi$) so that 
\begin{equation} \label{eq:Isbd}\left \| 
\chi\left( \begin{array}
{c c}
(H+M_0)^{1/2} & 0\\ 0 & I \end{array} \right)  \left( I_s(t) \left( \begin{array}{c} f_1 \\ f_2 \end{array} \right) 
- \left( \begin{array} {c} u_{thr}(t) \\ \partial_t u_{thr}(t) \end{array}\right) \right) \right\|_{\mch \oplus \mch} 
\leq C t^{-1} \left\| \left( \begin{array} {c} f_1 \\f_2\end{array}\right)\right\|_{\mch \oplus \mch}
\end{equation}
when $t$ is sufficiently large.
\end{prop}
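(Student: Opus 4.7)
The plan is to evaluate $I_s(t)$ by localizing near each of the finitely many thresholds $\sigma_j \in [0,\lambda_0)$. By Corollary \ref{c:threshold} together with the symmetry (\ref{eq:pm}), for each $\sigma_j > 0$ the operator $\chi(R(\lambda)-R(-\lambda))(I-\Pre)\chi$ decomposes near $\pm\sigma_j$ as
\[
\operatorname{sgn}(\lambda)\,\mathbf{1}_{|\lambda|>\sigma_j}\,\frac{i}{2\sqrt{\lambda^2-\sigma_j^2}}\,\chi\sum_{l:\sigma_l=\sigma_j}\gef_l(\sigma_j)\otimes\gef_l(\sigma_j)\,\chi \;+\; g_j(\lambda),
\]
where $g_j$ is real-analytic in the local uniformizer $\tau_j(\lambda)=(\lambda^2-\sigma_j^2)^{1/2}$ at $\tau_j=0$, hence H\"older-$1/2$ in $\lambda$ at the threshold. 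When $\sigma_j=0$ the singular factor collapses to the simple pole $\tfrac{i}{2\lambda}$ at the origin, which is why the principal value appears in the definition of $I_s$. Away from threshold neighborhoods the integrand is smooth in $\lambda$.

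Using a smooth partition of unity in $\lambda$, I would split $I_s(t)$ into a bulk integral, smooth in $\lambda$ and $O(t^{-\infty})$ by integration by parts, plus localized pieces at each $\pm\sigma_j$. For each $\sigma_j>0$, applying the Fresnel identity $\int_0^\infty e^{\pm it\mu}\mu^{-1/2}\,d\mu=e^{\pm i\pi/4}\sqrt{\pi/t}$ (via the substitution $\mu=|\lambda|-\sigma_j$) to the singular part and combining the $+\sigma_j$ and $-\sigma_j$ contributions produces terms of the form $t^{-1/2}\cos(\sigma_j t+\pi/4)$ and $t^{-1/2}\sin(\sigma_j t+\pi/4)$; pairing against the top and bottom rows of $A(\pm\sigma_j)$ and the components of $(f_1,f_2)^T$ reproduces exactly the $t^{-1/2}$ sums in (\ref{eq:thr}) for $u_{thr}$ and analogously for $\partial_t u_{thr}$. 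For $\sigma_j=0$, one has $\operatorname{PV}\int_{-\delta}^{\delta} e^{it\lambda}/\lambda\,d\lambda=i\pi+O(t^{-1})$; since the first column of $A(0)=\bigl(\begin{smallmatrix}0&-i\\0&0\end{smallmatrix}\bigr)$ is zero, the $f_1$ contribution is $O(t^{-1})$, and the top-right entry $-i$ combined with the residue $\tfrac{i}{2}\sum_{\sigma_j=0}\gef_j(0)\otimes\gef_j(0)$ yields exactly $\tfrac{1}{4}\sum_{\sigma_j=0}\gef_j(0)\langle f_2,\gef_j(0)\rangle$, while the bottom row of $A(0)$ vanishes (consistent with no constant term in $\partial_t u_{thr}$).

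The remaining errors to control are (i) the Fourier integrals of the H\"older-$1/2$ remainders $g_j$, which are $O(t^{-3/2})$ by the analytic dependence on $\tau_j$; (ii) the error from expanding $\sqrt{2\sigma_j\mu+\mu^2}=\sqrt{2\sigma_j\mu}\bigl(1+O(\mu)\bigr)$ and extending the Fresnel integrals to $\mu=\infty$, also $O(t^{-3/2})$; and (iii) the $O(t^{-\infty})$ bulk term. Each is $O(t^{-1})$, as required. The prefactor $\chi(H+M_0)^{1/2}$ on the first component is absorbed via the identity $(H+M_0)^{1/2}[R(\lambda)-R(-\lambda)]=(\lambda^2+M_0)(H+M_0)^{-1/2}[R(\lambda)-R(-\lambda)]$, contributing only the bounded multiplier $(\lambda^2+M_0)\le\lambda_0^2+M_0$ on the support of the integrand together with the fixed bounded operator $\chi(H+M_0)^{-1/2}$. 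The principal obstacle is verifying the H\"older-$1/2$ regularity of $g_j$ in the appropriate operator norm with $\chi$ on both sides, which follows from the analytic structure of the meromorphically continued resolvent in $\tau_j$ near thresholds established in Corollary \ref{c:threshold}.
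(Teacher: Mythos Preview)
Your overall strategy---localize near each threshold, extract the explicit singular part from Corollary~\ref{c:threshold}, and treat the rest by oscillatory integral estimates---matches the paper's approach closely. The zero-threshold computation and the stationary-phase/Fresnel treatment at $\sigma_j>0$ are essentially the paper's Lemma~\ref{l:I0} and Lemma~\ref{l:simplestatphase}.

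There is one genuine gap. You claim that the ``bulk'' integral (the piece localized away from all thresholds) is $O(t^{-\infty})$ by integration by parts. This is false: the domain of integration in $I_s$ is the \emph{bounded} interval $|\lambda|<\lambda_0$ with a \emph{sharp} cutoff, so each integration by parts produces boundary terms at $\pm\lambda_0$ of size $t^{-1}$, and these do not vanish. The paper's proof (see \eqref{eq:negl}) integrates by parts exactly once and keeps those boundary terms explicitly; the remark immediately following the statement of Proposition~\ref{p:Isbd} says this is precisely why the $O(t^{-1})$ bound is sharp. Your argument still closes, since $O(t^{-1})$ is all the proposition asks for, but the claimed $O(t^{-\infty})$ must be corrected to $O(t^{-1})$.

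A related point you omit: in the application (proof of Theorem~\ref{thm:wavedecay}) $\lambda_0$ is in fact chosen to be a threshold $\sigma_{j_0}$. Then the derivative of $\chi(R(\lambda)-R(-\lambda))(I-\Pre)\chi$ has an integrable $|\lambda^2-\lambda_0^2|^{-1/2}$ singularity at the endpoints, so one must check that the integrated term after one integration by parts is still finite. The paper notes this explicitly; you should too.
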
 
The term 
 $u_{thr}$ corresponds to possible resonances in $[0,\lambda_0)$
and if it is nontrivial decays at fastest at a rate proportional to $t^{-1/2}$.
We remark that the error 
$O(t^{-1})$ in the estimate of Proposition \ref{p:Isbd} is sharp
and is due to the 
discontinuous nature of the cut-off at $\lambda =\pm \lambda_0$ in the definition of $I_s(t)$.  The
error could be improved by instead using a smooth cut-off function 
in the $\lambda$ variable  to 
define  $I_s(t)$ and $I_m(t)$; we do something similar in Section \ref{s:exp2}. 
 However, since 
our methods for estimating the contribution of $I_m(t)$ result in an error 
of size $O(t^{-1})$ even with
this change,  we would not gain by taking this alternate approach here.

For $I_m(t)$, the corresponding result is
\begin{prop}\label{p:Imbd}  Assume (\ref{eq:resfree}), (\ref{eq:resolveassumption}),
and (\ref{e:weyl}).
Let $0<\epsilon<1/N_1$ and
let  $\lambda_0$ be as in the statement of Theorem \ref{thm:wavedecay}.
If  $H_Y$ is bounded so that the set 
$\{\nu_l\}$ is finite, by increasing $\lambda_0$ if 
necessary, choose $\lambda_0^2>\|H_Y\|$.
Choose $m_k\in \Natural$ so that $m_k>(N_2+4-k)/2$ for $k=1,2$, and, if the
set
$\{\nu_l\}$ is infinite, assume in addition $m_k>(N_2+N_3+3-k)/2$.
Then for $t$ sufficiently large
\begin{equation}
\left\| \chi \left( \begin{array} { c c} 
(H+M_0)^{1/2} & 0 \\
0 & I \end{array}\right)I_m(t)   \left( \begin{array} { c c} 
(H+M_0)^{-m_1} & 0 \\
0 &(H+M_0)^{-m_2}  \end{array}\right)  \chi \right\|_{\mch \oplus \mch 
\rightarrow \mch \oplus \mch }
\leq Ct^{-1}.
\end{equation}
\end{prop}
It is only in the proof of Proposition \ref{p:Imbd} that we use the assumptions
(\ref{eq:resfree}), (\ref{eq:resolveassumption}) and (\ref{e:weyl}).
We prove Proposition \ref{p:Isbd} in Section \ref{ss:Is} and Proposition 
\ref{p:Imbd} in Section \ref{ss:Im}.

Assuming these two propositions, we may now prove Theorem \ref{thm:wavedecay}.

\vspace{2mm}
\noindent {\em Proof of Theorem \ref{thm:wavedecay}.}
 Writing 
$u(t)=\Pre u(t)+(I-\Pre)u(t)$,   Lemma \ref{l:ueigenvalues} gives an explicit
expression for $u_e(t)=\Pre u(t)$. 

Recall that $\bbbone_{\infty}f_1,\; \bbbone_{\infty}f_2$ vanish 
for $r$ sufficiently 
large, and  without loss of generality we have chosen $\chi$ so that $\chi f_k= f_k$; in 
particular, $(H+M_0)^{m_k} \chi f_k = (H+M_0)^{m_k}  f_k
= \chi(H+M_0)^{m_k}  f_k$.
From  (\ref{eq:Ismlsum}) we see that to understand
$(I-\Pre) u(t)$ it suffices to understand 
 the contributions of $I_s(t)$, $I_m(t)$, and $I_l(t)$.  Here we choose
$\epsilon=1/(N_1+1)$.  If $H_Y$ is bounded, we choose $\lambda_0$ both 
satisfying
the conditions of the theorem and so that $\lambda_0^2>\|H_Y\|$.
If $H_Y$ is unbounded, choose $\lambda_0$ satisfying the conditions
of the theorem, and so that $\lambda_0\not = \sigma_j$ for any $j$.

Then, with $m_k\geq (N_1+3-k)/2$, $k=1,2$, the bound of $Ct^{-\epsilon\min(2m_1-1,2m_2)}$ on $I_l(t)$ from
(\ref{eq:Ilbd})  is less than 
or equal to $Ct^{-1}$.  The results
of Propositions \ref{p:Isbd} and  \ref{p:Imbd} complete the proof.
\qed

\subsection{The contribution of $I_s(t)$}\label{ss:Is}

The goal of this section is to prove Proposition \ref{p:Isbd}.  We 
remark that to prove this proposition, we do not need to assume
bounds on the resolvent of $H$ at high energy.  Moreover, 
note that the bound is given in terms of $\|f_1\|_\mch$, $\|f_2\|_{\mch}$,
so that we do not need the Sobolev-type norms $\| (H+M_0)^{m_k}f_k\|_{\mch}$ in 
this section.

In order to prove the proposition, we shall  write the integral
defining $I_s$
as the sum of three types of terms: an integral over a small neighborhood 
of $0$, an integral over a small neighborhood of $\nu_l$ or $-\nu_l$
  for each $\nu_l \in (0, \lambda_0)$, and an integral of a function with support disjoint from all
$\pm \sigma_j$ with $0\leq \sigma_j<\lambda_0$.  Recall that the integrand is 
continuous near $\lambda_0$, since we have assumed that no embedded resonances exceed 
$\lambda_0-1$.


Let $\psi \in C_c^{\infty}(\Real)$ be a function which is $1$ in a neighborhood
of $0$, and set
$$I_{0}(t)  =  \PV \frac{1}{2\pi i} \int \psi(\lambda)
e^{it\lambda} A(\lambda) (R(\lambda)- R(-\lambda)) (I-\Pre) d\lambda
.$$
Note that if  $\sigma_0>0$, then $I_0(t)=0$ if
the support of $\psi$ is chosen sufficiently small. 

We emphasize that the following lemma does not require high energy
resolvent estimates for $H$, and is valid for any $f_1,\; f_2\in \mch$ which
have $\bbbone_{\infty}f_1,\;\bbbone_{\infty}f_2$  both supported for
$r$ in 
a fixed compact subset of $[0,\infty)$.
\begin{lemma}\label{l:I0}
Let $H$ be any operator satisfying the hypotheses of Section \ref{ss:blackbox},
and let $f_1,\; f_2\in \mch$ have $\bbbone_{\infty}f_1,\;\bbbone_{\infty}f_2$  both supported in $r\leq r_1$.
Then with the support of $\psi$ chosen sufficiently small, for any $q
\in \Natural_0,$ $k\in \Natural$ there
is a constant $C$ depending on $q,\;k$  and the support of $f_1$, $f_2$ so that 
\begin{multline*}
\left\|  \chi(H+M_0)^{q/2} I_0(t)  \left( \begin{array} {c} f_1\\ f_2\end{array} \right) 
-\frac{1}{4} \left( \begin{array} {c} \chi \sum_{\sigma_j=0}M_0^{q/2} \Phi_j(0)\langle f_2,\Phi_j(0)\rangle
\\ 0 
\end{array} \right) \right \|_{\mch \oplus \mch}
\\ \leq C t^{-k}(\|  f_1\|_\mch+\|  f_2\|_\mch )
\end{multline*}
when $t$ is sufficiently large.
\end{lemma}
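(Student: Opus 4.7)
The plan is to combine Lemma \ref{l:specmeas} with integration by parts. Choose $\psi$ with support small enough that $\supp\psi$ contains no $\pm\sigma_j$ with $\sigma_j > 0$; since $\chi f_l = f_l$ by the convention $\chi(r) = 1$ for $r < M_1$, I may sandwich the integrand between copies of $\chi$ on both sides. On $(\supp\psi)\setminus\{0\}$, Lemma \ref{l:specmeas} reduces to
\begin{equation*}
\chi(R(\lambda) - R(-\lambda))(I-\Pre)\chi = \frac{i}{2\lambda}\sum_{\sigma_j = 0}\chi\Phi_j(\lambda)\otimes\Phi_j(\lambda)\chi,
\end{equation*}
since only the $\sigma_j = 0$ terms satisfy $\sigma_j^2 \leq \lambda^2$. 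Writing $\Phi_j(\lambda) = \Phi_j(0) + \lambda\widetilde\Phi_j(\lambda)$ with $\widetilde\Phi_j$ smooth in $\lambda$ on a complex neighborhood of $0$, this splits into a pure $1/\lambda$ part and a smooth operator-valued remainder $B(\lambda)$:
\begin{equation*}
\chi(R(\lambda)-R(-\lambda))(I-\Pre)\chi = \frac{i}{2\lambda}\sum_{\sigma_j=0}\chi\Phi_j(0)\otimes\chi\Phi_j(0) + B(\lambda).
\end{equation*}

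Next I move $\chi(H+M_0)^{q/2}$ inside the integral via the functional-calculus identity
\[ (H+M_0)^{q/2}(R(\lambda) - R(-\lambda))(I - \Pre) = (\lambda^2 + M_0)^{q/2}(R(\lambda) - R(-\lambda))(I-\Pre),\]
which holds because, with $g(\tau) = (\tau+M_0)^{q/2}$, the ``smoothed'' piece $\int\frac{g(\tau)-g(\lambda^2)}{\tau-\lambda^2}dE_H(\tau)$ of $g(H)R(\lambda)$ depends on $\lambda$ only through $\lambda^2$ and therefore cancels in the difference. Substituting the decomposition from the previous paragraph into the integrand for $\chi(H+M_0)^{q/2}I_0(t)$ yields a singular part coming from the $1/\lambda$ term and a smooth part coming from $A(\lambda)(\lambda^2+M_0)^{q/2}B(\lambda)$. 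The smooth part is a compactly supported, $C^\infty$-in-$\lambda$, operator-valued integrand, so integrating by parts $k$ times against $e^{it\lambda}$ gives the required $O(t^{-k})$ bound, with operator-norm control uniform in the data.

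Finally, I evaluate the singular piece. A direct matrix computation shows that among the entries of $A(\lambda)(\lambda^2+M_0)^{q/2}/(2\lambda)$ only the $(1,2)$ entry $(\lambda^2+M_0)^{q/2}/(2\lambda)$ is genuinely singular at $0$; all other entries are smooth in $\lambda$, so they contribute $O(t^{-k})$ by the same argument. Expanding $(\lambda^2+M_0)^{q/2} = M_0^{q/2} + O(\lambda^2)$ and applying the standard identity $\PV\int\psi(\lambda)\lambda^{-1}e^{it\lambda}\,d\lambda = i\pi\psi(0) + O(t^{-k})$ for $t\to+\infty$, the $(1,2)$ entry contributes exactly $\frac{M_0^{q/2}}{4}\sum_{\sigma_j=0}\chi\Phi_j(0)\langle f_2,\Phi_j(0)\rangle$ in the first component, with nothing remaining in the second. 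The step requiring the most care is justifying that $B(\lambda)$ is smooth with uniform operator-norm bounds on its derivatives; this reduces to the meromorphy of $\chi R(\lambda)(I - \Pre)\chi$ on a complex neighborhood of $0$ with only a simple pole at $\lambda = 0$, which follows from the absence of other thresholds in the neighborhood and the removal of any eigenvalue contribution by $(I-\Pre)$.
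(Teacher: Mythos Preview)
Your proof is correct and follows essentially the same route as the paper's: restrict $\supp\psi$ to miss the nonzero thresholds, use Lemma~\ref{l:specmeas} (and the meromorphy of $\chi R(\lambda)(I-\Pre)\chi$ near $0$ with at most a simple pole) to split off the $\tfrac{i}{2\lambda}\sum_{\sigma_j=0}\chi\Phi_j(0)\otimes\chi\Phi_j(0)$ singularity from a smooth remainder, pass $(H+M_0)^{q/2}$ to $(\lambda^2+M_0)^{q/2}$ via the spectral theorem, integrate by parts on the smooth pieces, and evaluate the lone singular $(1,2)$ entry via the principal-value identity. The only cosmetic difference is that the paper first disposes of all entries of $A(\lambda)$ carrying a factor of $\lambda$ and then invokes the decomposition, whereas you decompose first (via the Taylor expansion $\Phi_j(\lambda)=\Phi_j(0)+\lambda\widetilde\Phi_j(\lambda)$) and then isolate the singular entry; the content is the same.
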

\begin{proof}
Recall that $\chi (R(\lambda)-R(-\lambda))(I-\Pre) \chi$ has a singularity at worst like $1/\lambda$ at $\lambda=0$.  Thus,
 if $p\in \Natural$, then 
$\lambda^p (\lambda^2+M_0)^{q/2}\psi(\lambda) \chi(R(\lambda)-R(-\lambda))(1-\Pre) \chi$ is a smooth
function of $\lambda\in \Real$ if the support of $\psi$ is chosen sufficiently 
small that it contains no $\pm\sigma_j$ with $\sigma_j\not =0$.
Hence for $p\in \Natural$, by integrating by parts $k$ times, we find
for $t>0$
\begin{multline}\label{eq:pneg}
\left \|\int_{-\infty}^\infty  e^{it \lambda} \lambda^p (\lambda^2+M_0)^{q/2}\psi(\lambda) \chi (R(\lambda)-R(-\lambda) )(I-\Pre)\chi d\lambda \right\|_{\mch \rightarrow \mch} \\ \leq t^{-k}\int_{-\infty}^\infty
\left\| \frac{d^k }{d\lambda^k}\left(\lambda^p (\lambda^2+M_0)^{q/2}\psi(\lambda) \chi (R(\lambda)-R(-\lambda) )(I-\Pre)\chi\right) \right\|_{\mch\rightarrow \mch}
d\lambda =O(t^{-k})
, \; p\in \Natural
\end{multline}
for any $k\in \Natural$. 

Using (\ref{eq:pneg}) and considering the expression (\ref{eq:A}) for $A$, this
means we need only consider more carefully the entry corresponding to the 
upper right-hand corner of $A$.  We also will use, see 
Lemma \ref{l:specmeas},
$$(R(\lambda)-R(-\lambda) )(I-\Pre)= \frac{i}{2\lambda} \sum_{\sigma_j=0} 
\Phi_j(0)\otimes \Phi_j(0) +B(\lambda)$$
where $\chi B(\lambda)\chi $ is analytic in a neighborhood of $\lambda=0$.
Hence, using another integration by parts argument,
\begin{multline}\label{eq:reduction1}
\PV \int_{-\infty}^\infty  e^{it\lambda}(\lambda^2+M_0)^{q/2} \psi(\lambda) 
\chi (R(\lambda)-R(-\lambda) )(I-\Pre)\chi d\lambda
=\\\frac{iM_0^{q/2}}{2} \sum_{\sigma_j=0} 
\chi \gef_j(0)\otimes \chi \gef_j(0) \PV \int_{-\infty}^\infty  e^{it\lambda}
 \frac{\psi(\lambda) }{\lambda}
 d\lambda +O(t^{-k})
\end{multline}
when the support of $\psi$ is sufficiently small.

Now we use 
$$\PV \int_{-\infty}^\infty e^{it\lambda}\frac{1}{\lambda}d\lambda= i\pi, \; t>0$$
and the fact that $\psi$ is $1$ in a small neighborhood of the origin to 
find that 
\begin{multline*}
\PV \int_{-\infty}^\infty  e^{it\lambda} (\lambda^2+M_0)^{q/2}\psi(\lambda) 
\chi (R(\lambda)-R(-\lambda) )(I-\Pre)\chi d\lambda\\= \frac{-\pi M_0^{q/2}}{2} \sum_{\sigma_j=0} 
\chi \Phi_j(0)\otimes \chi \Phi_j(0) +O(t^{-k})
\end{multline*}
for $t$ sufficiently large.
\end{proof}


The next lemma follows directly from the more general Lemma 
\ref{l:allinonestatphase}.
\begin{lemma}\label{l:simplestatphase}
Let $\mcx$ be a Banach 
space, $\sigma_j>0,$ and set $B_0=\{ z\in \Complex\mid |z|<\min(\sigma_j,1)/2\}$.
If $F\in C_c^\infty(B_0;\mcx)$ then there is a  $C>0$
 so that
$$\left\| \int _0^\infty e^{- i\lambda t}\frac{ F(\tau_j(\lambda))}{\tau_j(\lambda)}
d\lambda - (\sigma_j t)^{-1/2} e^{-i\pi/4} \sqrt{2\pi}F(0)e^{-i\sigma_j t}\right\|_{\mcx} \leq C t^{-1},\; t>0. $$
Moreover, 
$$\left\| \int _0^\infty e^{ i\lambda t}\frac{ F(\tau_j(\lambda))}{\tau_j(\lambda)}
d\lambda \right\|_{\mcx} \leq 
 C t^{-1},\; t>0.$$
\end{lemma}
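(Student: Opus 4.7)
My plan is to reduce both integrals to endpoint Fresnel integrals by removing the square-root singularity of $1/\tau_j(\lambda)$ at $\lambda=\sigma_j$ through explicit changes of variable. Since $F$ is supported in $B_0$, the integrand vanishes except in a compact $\lambda$-neighbourhood of $\sigma_j$, so boundary contributions at $\lambda=0$ and $\lambda=\infty$ may be ignored. I split the integral at $\lambda=\sigma_j$ and set $\lambda = \sigma_j+u^2$ on $(\sigma_j,\infty)$ and $\lambda = \sigma_j-v^2$ on $(0,\sigma_j)$. Then $\tau_j(\lambda) = u\sqrt{2\sigma_j+u^2}$ and $\tau_j(\lambda)=iv\sqrt{2\sigma_j-v^2}$ respectively, so the apparent $1/u$, $1/v$ singularities cancel against $d\lambda=\pm 2u\,du$, $\pm 2v\,dv$. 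The first integral becomes
\[
\int_0^\infty e^{-i\lambda t}\frac{F(\tau_j(\lambda))}{\tau_j(\lambda)}\,d\lambda = 2e^{-i\sigma_j t}\!\int_0^\infty\! e^{-iu^2 t}G_+(u)\,du \;-\; 2i e^{-i\sigma_j t}\!\int_0^{\sqrt{\sigma_j}}\! e^{iv^2 t}G_-(v)\,dv,
\]
where $G_+(u) = F(u\sqrt{2\sigma_j+u^2})/\sqrt{2\sigma_j+u^2}$ and $G_-(v) = F(iv\sqrt{2\sigma_j-v^2})/\sqrt{2\sigma_j-v^2}$ lie in $C_c^\infty([0,\infty);\mcx)$ and share the endpoint value $G_\pm(0)=F(0)/\sqrt{2\sigma_j}$. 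The second integral of the lemma has the identical form but with the signs of the $u^2 t$ and $v^2 t$ phases and of the overall $\sigma_j t$ reversed.

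The core step is the endpoint asymptotic
\[
\int_0^\infty e^{\pm is^2 t}\,G(s)\,ds \;=\; \tfrac12\sqrt{\pi/t}\,e^{\pm i\pi/4}\,G(0) + O(t^{-1})
\]
for $G\in C_c^\infty([0,\infty);\mcx)$. I obtain it by writing $G(s)=G(0)+(G(s)-G(0))$; the first term reduces after the rescaling $s\mapsto s/\sqrt t$ to the Fresnel integral $\int_0^\infty e^{\pm is^2}ds=\tfrac12\sqrt{\pi}\,e^{\pm i\pi/4}$. The remainder admits one integration by parts against $d(e^{\pm is^2 t})=\pm 2ist\,e^{\pm is^2 t}\,ds$: the factor $1/s$ is absorbed by $G(s)-G(0)=O(s)$, the boundary at $\infty$ vanishes by compact support, the boundary at $s=0$ contributes $\mp G'(0)/(2it)=O(t^{-1})$, and the surviving integrand $\frac{d}{ds}[(G(s)-G(0))/s]$ is smooth and compactly supported, so after the prefactor $1/(\pm 2it)$ it is $O(t^{-1})$.

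Combining the two pieces yields both conclusions. For the first integral, $I_+$ gives $e^{-i\sigma_j t}F(0)\sqrt{\pi/(2\sigma_j t)}\,e^{-i\pi/4}$, and the identity $-i\,e^{i\pi/4}=e^{-i\pi/4}$ shows the $I_-$ piece gives the same quantity; summing produces $(\sigma_j t)^{-1/2}\sqrt{2\pi}\,F(0)\,e^{-i\pi/4}\,e^{-i\sigma_j t}$, which is the advertised leading term. For the second integral, the analogous contributions are $F(0)\sqrt{\pi/(2\sigma_j t)}\,e^{i\pi/4}e^{i\sigma_j t}$ from the outer piece and, since $-i\,e^{-i\pi/4}=-e^{i\pi/4}$, its exact negative from the inner piece; the two leading contributions cancel and only the $O(t^{-1})$ errors remain. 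There is no genuine obstacle in the argument: the only delicate point is the bookkeeping of the $\pm i\pi/4$ phases, which conspires to make the endpoint contributions add constructively in the first integral and destructively in the second.
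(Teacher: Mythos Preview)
Your argument is correct and takes a more elementary route than the paper. In the paper this lemma is not proved directly: it is recorded as the $k_0=1$ case of the more general Lemma~\ref{l:allinonestatphase}. There the authors split $F$ into its even and odd parts in $\tau_j$; the odd part yields something smooth in $\lambda$ and is dispatched by repeated integration by parts, while for the even part they substitute $\tau'=\tau_j$ (respectively $\tau'=-i\tau_j$), which leaves the square root in the \emph{phase} $e^{\pm it\sqrt{(\tau')^2+\sigma_j^2}}$ and a smooth even amplitude, and then invoke the full stationary phase theorem of H\"ormander. Your substitution $\lambda=\sigma_j\pm s^2$ instead makes the phase exactly quadratic, so the asymptotics reduce to the Fresnel integral plus one integration by parts; the cancellation in the second estimate, which the paper verifies either by direct comparison of stationary-phase coefficients or by a contour deformation argument, falls out immediately from the identity $-ie^{-i\pi/4}=-e^{i\pi/4}$. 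Your method needs less machinery, while the paper's yields the full expansion to any order (needed in Section~\ref{s:exp2}) with uniform control in $\sigma_j$.

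Two small slips are worth flagging. Once you write $G(s)=G(0)+(G(s)-G(0))$, neither piece has compact support: for $s$ beyond the support of $G$ the second piece equals $-G(0)$. So the boundary term at infinity after your integration by parts vanishes because it is $O(1/s)$, not ``by compact support''; and the surviving integrand $\frac{d}{ds}\bigl[(G(s)-G(0))/s\bigr]$ equals $G(0)/s^2$ there, so it is integrable but not compactly supported. Neither point affects the $O(t^{-1})$ conclusion.
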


\vspace{2mm}
\noindent{\em Proof of Proposition \ref{p:Isbd}}.  
Let $\psi\in C_c^\infty(\Real;[0,1])$ be equal to $1$ in a small 
neighborhood of the origin. 
Set $L=\max \{l\mid\nu_l<\lambda_0\}$, and, for $l=1,...,L$, set
$\psi_l(\lambda)= \psi(|\lambda^2-\nu_l^2|)$. Note that $\psi_l$ is smooth since $\psi$ is $1$ in
a neighborhood of the origin. 
Choose the support of $\psi$ sufficiently small
that 
$$0< l, \; l'\leq L, \; l\not = l' \Rightarrow 
\supp \psi_l \cap \supp \psi_{l'}=\emptyset \;\text{and} \supp \psi 
\cap \supp \psi_l=\emptyset.$$
Then set
$\psi_s(\lambda)= \psi(\lambda)+ \sum_{l=1}^L \psi_l(\lambda).$
By shrinking the support of $\psi$ if necessary, we can assume that 
$\psi_s$ is $0$ in a neighborhood of $\pm \lambda_0$.
Note that $(R(\lambda)-R(-\lambda))(I-\Pre)(1-\psi_s(\lambda))$ is 
smooth on $[-\lambda_0, \lambda_0]$ by our choice of
$\psi_s$ and since $\lambda_0^2$
is not an eigenvalue of $H_Y$.

To simplify notation, for $q\in \Natural_0$, set 
$$A_q(\lambda)=(\lambda^2+M_0)^{q/2}A(\lambda).$$
Hence, integrating by parts,
\begin{align}\label{eq:negl}
& \left \| \int_{-\lambda_0}^{\lambda_0} 
e^{i t \lambda}A_q(\lambda) (1-\psi_s(\lambda))\chi ( R(\lambda)-R(-\lambda))(I-\Pre) \chi d\lambda \right\|_{\mch\oplus \mch \rightarrow \mch\oplus \mch} \nonumber \\&
= \left\|  \frac{-i}{t}[e^{i t  \lambda_0}A_q(\lambda_0)+e^{-it\lambda_0} A_q(-\lambda_0) ]\chi ( R(\lambda_0)-
R(-\lambda_0))(I-\Pre) \chi  \right. \nonumber \\ &  \hspace{5mm}
\left.  +\frac{i}{t} 
\int_ {-\lambda_0}^{\lambda_0} e^{i t \lambda}\frac{d}{d\lambda}
\left( A_q(\lambda)(1-\psi_s(\lambda))\chi ( R(\lambda)-R(-\lambda))(I-\Pre) \chi
\right)  d\lambda \right\|_{\mch\oplus \mch \rightarrow \mch\oplus \mch} \nonumber
\\ &  =O(t^{-1})
\end{align}
since $$\left \| \frac{d}{d\lambda}
\left(A_q(\lambda) (1-\psi_s(\lambda))\chi ( R(\lambda)-R(-\lambda)) (I-\Pre)\chi
\right) \right\|_{\mch\oplus \mch \rightarrow \mch\oplus \mch} \in L^1([-\lambda_0,\lambda_0]).$$

We will now focus on neighborhoods of $\pm \nu_l$. Consider 
$\int_{-\lambda_0}^{\lambda_0} e^{it \lambda} A_q(\lambda)
\psi_l(\lambda ) (R(\lambda)-R(-\lambda) )(I-\Pre)
d\lambda$ with $0< l \leq L$.
Let $j=j(l)\in \Natural$ be such that $\nu_l=\sigma_j$.  By 
our choice of the support properties of $\psi$, 
$\tau_j(\lambda) \psi_l(\lambda ) R(\lambda)(I-\Pre)$ is a smooth 
function of $\tau_j(\lambda)$ for $\lambda \in \Real$, but 
$\tau_j(\lambda) \psi_l(\lambda ) R(-\lambda)(I-\Pre)$ is not.  The reason 
for this second is that 
for $\lambda \in \Real$, $\tau_j(-\lambda)/\tau_j(\lambda)= 1$ if 
$0<|\lambda|<\sigma_j$,
and $\tau_j(-\lambda)/\tau_j(\lambda)=-1$ for $|\lambda|>\sigma_j$.  Hence 
we do a change of variable:
\begin{multline}
\int_{-\lambda_0}^{\lambda_0} e^{it \lambda} A_q(\lambda)
\psi_l(\lambda) (R(\lambda)-R(-\lambda) )(I-\Pre)
d\lambda \\
= \int_{-\lambda_0}^0 (e^{i t\lambda}A_q(\lambda)-e^{-it \lambda} A_q(-\lambda))
\psi_l(\lambda) R(\lambda)(I-\Pre) d\lambda  \\
+ \int^{\lambda_0}_0 (e^{i t\lambda}A_q(\lambda)-e^{-it \lambda} A_q(-\lambda))
\psi_l(\lambda)R(\lambda) (I-\Pre) d\lambda.
\end{multline}

By shrinking the support of $\psi$ if 
necessary, we may apply Lemma \ref{l:simplestatphase}
to the second integral on the right-hand side,
 with $F(\tau)= \chi \tau A_q((\tau^2+\sigma_j^2)^{1/2})\psi(|\tau|^2)R(\lambda(\tau))(I-\Pre)\chi$, where 
$\lambda(\tau)$ is the locally well-defined inverse of $\zhat\ni \lambda\mapsto \tau_j(\lambda)\in \Complex$.  Thus, using the meromorphic continuation
of $\chi R \chi$ to $\zhat$ and Lemma \ref{c:threshold},
 $F$ is a smooth function
supported in a complex neighborhood of the origin.

From Lemma \ref{l:simplestatphase}, for $t>0$
\begin{multline}\label{eq:lint1}
\int^{\lambda_0}_0 e^{- i t\lambda}A_q(-\lambda)
\psi_l(\lambda)\chi R(\lambda)(I-\Pre)\chi d\lambda= \int^{\lambda_0}_0 e^{- i t\lambda}
A_q(- \lambda)
\psi(|\tau_j(\lambda)|^2)\chi R(\lambda)(I-\Pre) \chi d\lambda\\
= \sqrt{2\pi}e^{- i(\sigma_jt+\pi/4) }A_q(- \sigma_j)[\chi R(\lambda)(I-\Pre)\chi \tau_j(\lambda)]\restrict_{\lambda=\sigma_j}(\sigma_j t)^{-1/2}+B_{l,1}(t)
\end{multline}
where $\| B_{l,1}(t) \|_{\mch\oplus \mch\rightarrow \mch\oplus \mch}= 
O(t^{-1})$.
By a result parallel to Lemma \ref{l:simplestatphase}, for $t>0$,
\begin{multline}
\int_{-\lambda_0}^0 e^{- i t\lambda}A_q( -\lambda)
\psi_l(\lambda)\chi R(\lambda)(I-\Pre ) \chi d\lambda =\int_{-\lambda_0}^0 e^{- i t\lambda}
A_q(- \lambda)
\psi(|\tau_j(\lambda)|^2)\chi R(\lambda) (I-\Pre )\chi d\lambda\\
= -\sqrt{2\pi}e^{ i(\sigma_j t+\pi/4) }A_q( \sigma_j)
[\chi R(\lambda)(I-\Pre)\chi \tau_j(\lambda)]\restrict_{\lambda=-\sigma_j}(\sigma_j t)^{-1/2}+B_{l,2}(t)
\end{multline}
where $\|  B_{l,2}(t) \|= O(t^{-1})$.
From Lemma \ref{l:simplestatphase} (and the analogous
result for the integral over $\lambda<0$)
\begin{equation}
\left\| \chi \int_{-\lambda_0}^{\lambda_0} e^{ i t\lambda}A_q( \lambda)
\psi_l(\lambda )R(\lambda)(I-\Pre )\chi  d\lambda\right\|_{\mch\oplus \mch\rightarrow \mch\oplus \mch}=O(t^{-1}),\; t\rightarrow \infty.
\end{equation}


It follows from 
Lemma \ref{c:threshold}
that 
\begin{equation}\label{eq:resatthresh}
(\tau_j (\lambda)\chi R(\lambda)(I-\Pre) \chi)\restrict_{\lambda =\pm \sigma_j}=\frac{i}{4} \sum_{j': \sigma_{j'}=\sigma_j= \nu_l} \chi \Phi_{j'}(\sigma_j)\otimes \chi
\Phi_{j'}(\sigma_j).
\end{equation}
Hence from (\ref{eq:lint1}-\ref{eq:resatthresh}), we have
\begin{multline}\label{eq:lcont}
\int_{-\lambda_0}^{\lambda_0} e^{it \lambda} A_q(\lambda) \psi_l(\lambda) \chi (R(\lambda)-R(-\lambda))(I-\Pre)\chi d\lambda \\
= (\sigma_j t)^{-1/2} \frac{i}{2} \sqrt{\pi/2} \sum_{j: \sigma_j=\nu_l} \left[
e^{i(\sigma_jt+\pi/4)}A_q(\sigma_j)- e^{-i(\sigma_jt+\pi/4)}A_q(-\sigma_j)\right] \chi \Phi_j(\sigma_j)\otimes \chi \Phi_j(\sigma_j)
+B_{l,\chi}
\end{multline}
where $\| B_{l,\chi}\|=O(t^{-1}).$

Using (\ref{eq:negl}), (\ref{eq:lcont}) and Lemma \ref{l:I0} proves Proposition
\ref{p:Isbd}.  \qed

\subsection{The contribution of $I_m(t) $}\label{ss:Im}
The main result of this section is Proposition \ref{p:Imbd}, which provides the needed bound 
on $I_m(t)$.  This is the portion of the proof of Theorem \ref{thm:wavedecay}
for which we use the resonance-free region and the high energy 
resolvent estimate, and we assume these hold for all results in this section.
We have some freedom in our choice of $\lambda_0$ -- we can always choose
a larger value. 
 The choice of $\lambda_0$
in Proposition \ref{p:Imbd} is made to simplify the proof a bit.

In order to prove the bound,
we shall perform a contour deformation
into $\Cs$.  
We recall that 
\begin{equation} \label{eq:ImQs}
I  _m(t)  = \frac{1}{2\pi i } \int_{\lambda_0<|\lambda| < t^{\epsilon}}e^{it\lambda}A(\lambda) 
[ R(\lambda)-R(-\lambda)] d\lambda.
\end{equation}
There is no need to compose on the right with $(I-\Pre)$ here,
since  $\lambda_0^2$ exceeds the largest eigenvalue of $H$.

In order to simplify notation, 
set
\begin{equation}\label{eq:G}
G(\lambda)= \left( \begin{array} { c c} 
(\lambda^2+M_0)^{1/2} & 0 \\
0 & 1 \end{array}\right) A(\lambda)  \left( \begin{array} { c c} 
(\lambda^2+M_0)^{-m_1} & 0 \\
0 &(\lambda^2+M_0)^{-m_2}  \end{array}\right) 
\end{equation}
and
$$
R_\chi(\lambda)= \chi R(\lambda)\chi.$$
Then  
\begin{multline}\label{eq:withG}
\left\| \chi \left( \begin{array} { c c} 
(H+M_0)^{1/2} & 0 \\
0 & I \end{array}\right)I_m(t)   \left( \begin{array} { c c} 
(H+M_0)^{-m_1} & 0 \\
0 &(H+M_0)^{-m_2}  \end{array}\right)  \chi \right\|_{\mch \oplus \mch 
\rightarrow \mch \oplus \mch }\\
= 
\left\|  \frac{1}{2\pi i } \int_{\lambda_0<|\lambda| <
 t^{\epsilon}}e^{it\lambda}G(\lambda) 
[ R_\chi(\lambda)-R_\chi(-\lambda)]d\lambda \right\|_{\mch \oplus \mch 
\rightarrow \mch \oplus \mch }.
\end{multline}

We treat the contributions from the terms $R_\chi(\lambda)$ and $R_\chi(-\lambda)$ separately; the second
is substantially more difficult than the first.

To bound the term in (\ref{eq:ImQs}) with $e^{it\lambda}R_\chi(\lambda)$ we shall
use the following lemma.
\begin{lemma}\label{l:easycontour}
Let $\lambda_0>0$ be as in the statement of Theorem \ref{thm:wavedecay}.
Then, if $\epsilon,\;t>0$ then there is a constant $C$ so that 
\begin{equation}\label{eq:easycontourest}
\left\| \int_{\lambda_0<|\lambda|<t^\epsilon} 
e^{it\lambda} G(\lambda) R_\chi(\lambda) d\lambda \right\| _{\mchs \rightarrow \mchs}
\leq Ct^{-1} 
\end{equation}
if $N_2+\max(2-2m_1, 1-2m_2)\leq 0$ and $t$ is sufficiently large.
\end{lemma} 
We postpone the  proof of this lemma to Section \ref{ss:segmentintegral}.

The proof of Lemma \ref{l:easycontour} uses a contour deformation
argument, deforming the contour into the upper half-plane where $e^{it\lambda}$
decays in $t$.
To bound an integral of the form
\begin{equation}\label{eq:twotobound}\int^{-\lambda_0}_{-t^\epsilon} G(\lambda)   e^{it\lambda}
R_\chi(-\lambda) d\lambda\; \text{or}\; \int_{\lambda_0}^{t^\epsilon} G(\lambda)   e^{it\lambda}
R_\chi(-\lambda) d\lambda
\end{equation}
in a similar way, we run into the problem that $R_\chi(-\lambda)$ must
be evaluated at a point with $\Im(-\lambda)<0$.  Since the continuation
of $R_\chi$ is to $\Cs$, we see that this is complicated.
 Each {\em distinct } value of 
$\sigma_j>0$ gives ramification points at $\pm \nu_l$ in $\zhat$; this 
corresponds to the omitted rays in the lower 
half plane in $\Cs$, and we must stay away from these omitted rays.  Instead,
a contour deformation argument gives us the following proposition, which 
we prove in  Section \ref{ss:segmentintegral}.  The values of $a$ and $b$ are
chosen so that we can avoid the omitted rays in $\Cs$.

\begin{prop}\label{p:segmentintegral}
Let $0<\epsilon<1/N_1$ 
and set
$$p=N_2+\max(2-2m_1,1-2m_2)$$
where $N_1,\; N_2$ are as in the statement of Theorem \ref{thm:wavedecay}.
  Then there are constants $T,\; C>0$ so that if
 $t>T$ then
\begin{equation}\label{eq:segment1}
\left\| \int_{a\leq |\lambda|\leq b} 
e^{i\lambda t}G(\lambda ) R_\chi(-\lambda)  d\lambda\right\|_{\mchs \rightarrow \mchs} 
\leq  C t^{-1}\left(a^{p} + b^{p}+ \int_{a}^bs^{p}
ds\right)
\end{equation}
whenever $a$ and $b$ satisfy $\max(\lambda_0,\nu_l)< a <b<
\min(\nu_{l+1}, t^\epsilon)$ for some $l\in \Natural_0$.
Moreover, 
if $H_Y$ is bounded so that $\{\nu_l\}$ is finite,
then (\ref{eq:segment1}) holds whenever
$\max(\lambda_0,\|H_Y\|)
< a<b< t^\epsilon$.
 \end{prop}

Note that, by assumption, $\lambda_0>1$ and $p<-1$.

For the proofs of both Propositions \ref{p:segmentintegral} and
\ref{p:hardmedium}, we focus on the integrals over negative values of 
$\lambda$, as these are notationally slightly easier to handle after a 
change of variable as in (\ref{eq:top1}).  The integrals
over positive values of $\lambda$ can be handled in almost the same way.
Alternatively, one can use that for $\lambda \in \Real$, $R_\chi(\lambda)= R_\chi(-\lambda)^*$
for real-valued $\chi$.
We postpone the proof of Proposition \ref{p:segmentintegral}
to Section \ref{ss:segmentintegral}, and instead turn to the consequences of the proposition.


\begin{prop}\label{p:hardmedium} 
Let $\lambda_0$ 
 be as in the statement of 
of Proposition \ref{p:Imbd}, and assume $m_k>(N_2+4-k)/2$ for $k=1,2$.
If the set $\{\nu_l\}$ is infinite, assume in addition that 
$m_k>(N_2+N_3+3-k)/2$ for $k=1,2$.
Then, if $0<\epsilon<1/N_1$ and  $t$ is sufficiently
large, there is a constant $C$ so that 
\begin{equation}\label{eq:hm1}
\left\| \int_{\lambda_0<|\lambda|<t^\epsilon}
 e^{it\lambda} G(\lambda) 
R_\chi(-\lambda) d\lambda \right\|_{\mchs\rightarrow \mchs} 
\leq Ct^{-1}. \end{equation}
\end{prop}
\begin{proof}  If $H_Y$ is bounded, so that the set $\{\nu_l\}$ is 
finite, then this proposition follows almost directly from Proposition 
\ref{p:segmentintegral}, using the choice of $\lambda_0^2>\|H_Y\|$.
  We write
$$\int_{\lambda_0<|\lambda|<t^\epsilon}e^{it\lambda} G(\lambda)R_\chi(-\lambda)d\lambda
= \lim_{\delta \downarrow 0}\int_{\lambda_0+\delta <|\lambda|<t^\epsilon-\delta}e^{it\lambda }G(\lambda)R_\chi(-\lambda)d\lambda,$$
  apply the estimate of Proposition \ref{p:segmentintegral}, and use the
fact that $\int_1^\infty s ^{p}ds$ converges since $p<-1$.

Now suppose the set $\{\nu_l\}$ is infinite.  
We give the proof for the integral over
$(-t^\epsilon,-\lambda_0)$, as the 
proof for the integral over $(\lambda_0,t^\epsilon)$ is essentially identical.
Choose $l_0\in \Natural$
so that $\nu_{l_0}\geq \lambda_0$ but $\nu_{l_0-1}\leq \lambda_0$, and 
let $L(t)\in \Natural$ be such that $\nu_{L(t)}\leq t^\epsilon$, but 
$\nu_{L(t)+1}\geq t^\epsilon$.
Then we write
\begin{multline}\label{eq:dividedintegral}
  \int^{-\lambda_0}_{-t^\epsilon}   e^{it\lambda} G(\lambda) 
R_\chi(-\lambda) d\lambda
= \lim_{\delta \downarrow 0} \left( 
\sum_{l=l_0}^{L(t)-1} \int_{-\nu_{l+1}+\delta}^{-\nu_l-\delta} e^{it\lambda} G(\lambda) 
R_\chi(-\lambda) d\lambda \right. \\ \left. + \int^{-\nu_{L(t)}-\delta}_{-t^\epsilon+\delta} 
 e^{it\lambda} G(\lambda) 
R_\chi(-\lambda) d\lambda + 
\int^{-\lambda_0-\delta}_{-\nu_{l_0}+\delta}  e^{it\lambda} G(\lambda) 
R_\chi(-\lambda) d\lambda\right).
\end{multline}
Using (\ref{eq:dividedintegral}) and
Proposition \ref{p:segmentintegral} we find
that
\begin{align}\label{eq:ubdsum}
& \left \| \int^{-\lambda_0}_{-t^\epsilon} e^{it\lambda} G(\lambda) 
R_\chi(-\lambda)  d\lambda\right \|
\leq Ct^{-1} \left( \sum_{l=l_0}^{L(t)-1}  
\left(\nu_l^{p}+\nu_{l+1}^{p}\right)
+ \int_{\lambda_0}^{t^\epsilon}s^{p}ds 
+  \lambda_0^p + t^{\epsilon p}\right).
\end{align}
Using the lower bound \eqref{e:weyl}: $\nu_l \geq l^{1/N_3}/C_2$, 
\begin{equation}
 \sum_{l=l_0}^{L(t)-1}  
\left(\nu_l^{p}+\nu_{l+1}^{p}\right) \leq 2 \sum_{l=l_0}^{L(t)}  
 (l^{1/N_3}/C_2)^{p}.
\end{equation}
This sum is bounded independently of $t$ because $p/N_3<-1$, or 
$m_k>(N_2+N_3+3-k)/2$ for $k=1,2$. The integral
in (\ref{eq:ubdsum}) is bounded independently of $t$ because $p<-1$.
\end{proof}

Proposition \ref{p:Imbd} follows directly from (\ref{eq:withG}),
Proposition \ref{p:hardmedium}, and Lemma \ref{l:easycontour}.

\subsection{Proofs of Proposition \ref{p:segmentintegral} and Lemma \ref{l:easycontour}}
\label{ss:segmentintegral}
It remains to prove Proposition \ref{p:segmentintegral} and Lemma \ref{l:easycontour}.  
The proofs are similar, involving contour deformations off the real axis to take advantage
of the exponential decay of $e^{\pm it\lambda}$ in the appropriate half-plane. 
That this and the resulting estimates are possible are due to the assumptions 
of a resonance-free region in which we have an estimate on
the cut-off resolvent, our assumptions (\ref{eq:resfree})
and (\ref{eq:resolveassumption}). 
As the proof of Proposition \ref{p:segmentintegral} is more complicated, we focus on it.
In particular, we 
prove (\ref{eq:segment1})  for the integral over
$[-b,-a]$ carefully, as the proof for the integral over $[a,b]$
is completely analogous.
 A first step in our proof of Proposition \ref{p:segmentintegral} is
 
\begin{lemma}\label{l:Ijs}
Let  $\epsilon <1/N_1$.    Suppose 
 $\max(\lambda_0, \nu_l)< a<b <
\min(\nu_{l+1}, t^\epsilon)$ for some $l \in \Natural_0$.  Alternatively, if $H_Y$ is
bounded, we allow the possibility that $\max(\lambda_0,\|H_Y\|)< a <b<t^\epsilon$. For each $t>1$, let  $\mathfrak{R}_t$ be the closed rectangle in $\Cs$ with vertices
$$a,\; b,\; b-i(\log t)/t,\; \text{ and} \; a-i (\log t)/t.$$  
Let $\gamma_{\downarrow}$, $\gamma_{\rightarrow}$, and $\gamma_{\uparrow}$ be the 
left, bottom, and right sides of  $\mathfrak{R}_t$, oriented counterclockwise.

Then there is a $T>1$, independent of $a,\;b$, and $l$, such that  for $t>T$
$$\int_{-b}^{-a} 
e^{i\lambda t} G(\lambda) R_\chi(-\lambda) d\lambda
= I_{\downarrow}+I_{\rightarrow}+I_{\uparrow},$$
where
\begin{equation}\label{eq:directionalintegrals}
I_{\bullet}  =  \int_{\gamma_{\bullet}} e^{-i\lambda t} G(-\lambda) R_\chi(\lambda) d\lambda , 
\end{equation}
with $\bullet$ denoting one of $\downarrow$, $\rightarrow$, or $\uparrow$.
\end{lemma}

\begin{figure}[h]
\labellist
\pinlabel $\gamma_{\downarrow}$ [l] at 17 19
\pinlabel $\gamma_{\uparrow}$ [l] at 98 19
\pinlabel $\gamma_{\rightarrow}$ [l] at 54 -4
\tiny
\pinlabel $a$ [l] at 24 38
\pinlabel $b$ [l] at 93 38
\pinlabel $-i\log t/t$ [l] at 9 2
\endlabellist
 \includegraphics[width=10cm]{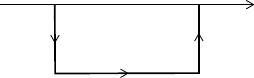}
 \caption{The contour used in Lemma \ref{l:Ijs}.}
\label{f:gamma}
\end{figure}
\begin{proof}  By a change of variable,
\begin{equation}\label{eq:top1}\int_{-b}^{-a} 
e^{i\lambda t} G(\lambda) R_\chi(-\lambda) d\lambda= \int_{a}^{b} 
e^{-i\lambda t} G(-\lambda) R_\chi(\lambda) d\lambda .
\end{equation}
Note that
$ G(-\lambda) $ is analytic in $\lambda$ in a neighborhood of ${\mathfrak R}_{t}$, for any $t>1$.   Moreover, there is a $T>1$, independent of $a,\;b$ and 
$l$ satisfying 
conditions of the lemma,
such  that $R_\chi(\lambda) $ is analytic in a neighborhood of 
${\mathfrak R}_{t}$ when $t>T$ . It is here that
we use $\epsilon<1/N_1$ and the fact that $R_\chi(\lambda)$ has an analytic continuation
to $\{\lambda \in \Cs\mid\ \Re \lambda>\lambda_0 \; \text{and}\; \Im \lambda >-C_0 (\Re \lambda)^{-N_1}\}.$
Hence by Cauchy's theorem
$$\int_{-\partial{\mathfrak R}_{t}} e^{-i\lambda t }G(-\lambda) R_\chi(\lambda)d\lambda=0.$$   
 The integral over the top side of the rectangle is the integral in (\ref{eq:top1}), hence the lemma follows directly.
\end{proof}

The next lemma bounds the integrals over the vertical sides of the rectangle.
\begin{lemma} \label{l:sides}
Set $p=N_2+\max(2-2m_1,1-2m_2)$. With the notation and assumptions as in Lemma
 \ref{l:Ijs},  there is a constant 
$C>0$ independent of $a, \; b$ and $l$  so that for $t>T$
$$\| I_{\downarrow}\|_{\mch\oplus \mch\rightarrow\mch\oplus \mch} \leq Ct^{-1}a^{p},$$
and
$$\|  I_{\uparrow}\|_{\mch\oplus \mch\rightarrow\mch\oplus \mch} 
\leq Ct^{-1}b^{p}.$$
Here $N_2$ is as in the statement of Theorem \ref{thm:wavedecay}.
\end{lemma}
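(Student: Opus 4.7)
The plan is a direct estimate of the integrand along each vertical contour. For $I_{j\downarrow}$, I would parametrize $\gamma_{j\downarrow}+\delta$ by $\lambda(s) = \sigma_j + \delta - is\log t/t$ with $s \in [0,1]$, so that $|d\lambda| = (\log t/t)\,ds$ and, crucially, $|e^{-i\lambda t}| = t^{-s}$. The factor $G(-\lambda)$ is an explicit rational matrix: reading off its entries from (\ref{eq:A}) and (\ref{eq:G}), the largest exponent appearing in any entry yields $\|G(-\lambda)\| \le C(1+|\lambda|)^{\max(2-2m_1,\,1-2m_2)}$ uniformly on any bounded horizontal strip near $\Real$.

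Next I would check that the contour lies in the resonance-free region (\ref{eq:resfree}) so that the hypothesis (\ref{eq:resolveassumption}) applies. The real-part condition holds trivially since $\Re\lambda = \sigma_j + \delta \ge \lambda_0 > \lambda_0 - 1$. The imaginary-part condition amounts to $(\log t)/t < C_0(\sigma_j+\delta)^{-N_1}$; since $\sigma_j \le \sigma_{j+1} < 2t^\epsilon$ with $\epsilon < 1/N_1$, a short computation shows this inequality holds for all $t$ large enough, uniformly in $j$ and in $\delta$ small. Consequently $\|R_\chi(\lambda)\| \le C_1(1+|\lambda|)^{N_2} \le C(1+\sigma_j)^{N_2}$ on the contour.

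Combining these pointwise estimates,
$$\|I_{j\downarrow}\|_{\mchs \to \mchs} \le C(1+\sigma_j)^{N_2+\max(2-2m_1,\,1-2m_2)}\,\frac{\log t}{t}\int_0^1 t^{-s}\,ds = C(1+\sigma_j)^{N_2+\max(2-2m_1,\,1-2m_2)}\,\frac{1-t^{-1}}{t},$$
which is exactly the claimed $Ct^{-1}$ bound (times the polynomial factor) and, being uniform in $\delta$, persists in the limit $\delta \downarrow 0$. The bound for $I_{j\uparrow}$ follows identically with $\sigma_{j+1}$ in place of $\sigma_j$. There is no real obstacle: the key nontrivial inputs (the resolvent bound and the analyticity region) are handed to us by hypothesis, and the only delicate step is verifying that the prescribed contour height $(\log t)/t$ keeps the segment inside that region, which is precisely what the condition $\epsilon < 1/N_1$ is engineered to guarantee.
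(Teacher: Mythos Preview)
Your proof is correct and follows essentially the same approach as the paper: bound $|e^{-i\lambda t}|$, $\|G(-\lambda)\|$, and $\|R_\chi(\lambda)\|$ pointwise on the vertical segment, then integrate. The paper parametrizes by $s=\Im\lambda\in[-(\log t)/t,0]$ rather than your $s\in[0,1]$, but the resulting integral $\int e^{ts}\,ds$ produces the identical factor $t^{-1}(1-t^{-1})$; your explicit verification that the contour lies in the region (\ref{eq:resfree}) is handled in the paper implicitly via Lemma~\ref{l:Ijs}, so your inclusion of it here is slightly redundant but certainly not wrong.
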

\begin{proof}
The proofs of the two inequalities are essentially the same, so we prove 
only the first one.
We use that $|e^{-i\lambda t}|= e^{t\Im \lambda}$. 
Hence
\begin{align*}
\|  I_{\downarrow}\|_{\mchs\rightarrow \mchs} &=
 \left\| \int_{\gamma_{\downarrow}}
e^{-i\lambda t} G(-\lambda) R_\chi(\lambda) d\lambda \right\|_{\mchs\rightarrow \mchs} \\ 
& \leq \int_{-(\log t)/t}^0 e^{t s}
 \|G(-(a+is)) R_\chi(a+is )\|_{\mchs\rightarrow \mchs} ds.
\end{align*}
Recall from the definition of $G$ in (\ref{eq:G}) that for $\lambda \gg 1$,
$$G(-\lambda) \sim \left( \begin{array} {cc} -\lambda^{2-2m_1} & 
-i\lambda^{1-2m_2}\\
i\lambda^{2-2m_1} & -\lambda^{1-2m_2}\end{array}\right).$$
 Moreover, for $\lambda$ lying on the 
image of $\gamma_{\downarrow}$,  $|\lambda|$ is quite close to $a$, so
that
on $\gamma_{\downarrow}$,
 $\|G(-\lambda)\|\leq Ca^{\max(2-2m_1,1-2m_2)}$ and 
$\| R_\chi(\lambda)\|_{\mch\rightarrow \mch} \leq Ca^{N_2}$ by our assumptions on 
$R_\chi$ in the statement of the theorem.  The constants here are 
independent of  $a$ and $l$.
Thus
\begin{align*}\| I_{\downarrow}\|_{\mchs\rightarrow \mchs} &
 \leq C \int_{-(\log t)/t}^0 e^{t s} a^{N_2+\max(2-2m_1,1-2m_2)}ds 
\leq C a^{p}\;t^{-1}.
\end{align*}
\end{proof}

Next we bound the integral over the bottom side of the rectangle.
\begin{lemma}\label{l:bottom} With the notation and assumptions of Lemma \ref{l:Ijs},
there is a constant 
$C>0$ independent of $a$, $b$, and $l$  so that for $t>T$ 
$$\| I_{\rightarrow }\|_{\mchs\rightarrow \mchs} \leq C t^{-1}\int_{a}^b
 s^{N_2+ \max(2-2m_1,1-2m_2)}ds$$
with $N_2$ as in the statement of Theorem \ref{thm:wavedecay}.
\end{lemma}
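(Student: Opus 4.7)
The plan is to estimate $I_{j\rightarrow}$ directly by integrating the norm of the integrand along the bottom side of the rectangle ${\mathfrak R}_{jt}$, combining three ingredients: the exponential decay factor $|e^{-i\lambda t}|$ on this contour, the polynomial growth bound on the cut-off resolvent in the resonance-free region, and the asymptotic form of $G(-\lambda)$.

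First I would check that the path $\gamma_{j\rightarrow}$ actually lies inside the resonance-free region (\ref{eq:resfree}), so that the estimate (\ref{eq:resolveassumption}) applies along it. On $\gamma_{j\rightarrow}$ one has $\Im \lambda = -(\log t)/t$ and $\sigma_j \leq \Re\lambda \leq \sigma_{j+1} < 2 t^{\epsilon}$. The condition $\Im \lambda > -C_0 (\Re\lambda)^{-N_1}$ reduces to $\log t < C_0\, t\, (\Re\lambda)^{-N_1}$, which is implied by $\log t < C_0\, 2^{-N_1}\, t^{\,1-\epsilon N_1}$. Because $\epsilon < 1/N_1$ we have $1 - \epsilon N_1 > 0$, so this inequality holds for every $t$ larger than some $T$ independent of $j$. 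The same $T$ is already needed for Lemma \ref{l:Ijs}, so one can use a common threshold.

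Second, I would extract the pointwise bound on the integrand. On $\gamma_{j\rightarrow}$, $|e^{-i\lambda t}| = e^{t \Im \lambda} = 1/t$. From (\ref{eq:G}) and (\ref{eq:A}), the large-$|\lambda|$ behavior of $G(-\lambda)$ is governed by the entries $\lambda^{2-2m_1}$ and $\lambda^{1-2m_2}$, giving
\[
\|G(-\lambda)\|_{\mchs\to\mchs} \leq C (1+|\lambda|)^{\max(2-2m_1,\,1-2m_2)}.
\]
On this contour $|\lambda|$ is comparable to $\sigma_j$ (the vertical offset $(\log t)/t$ is tiny, and $\sigma_j\geq\lambda_0$), and (\ref{eq:resolveassumption}) gives $\|R_\chi(\lambda)\|_{\mch\to\mch} \leq C(1+|\lambda|)^{N_2}$. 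Under the hypothesis $N_2 + \max(2-2m_1,1-2m_2) \leq 0$, the exponent is non-positive, so the product $\|G(-\lambda)\|\,\|R_\chi(\lambda)\|$ is a \emph{decreasing} function of $|\lambda|$, and is therefore bounded on the whole contour by $C(1+\sigma_j)^{N_2+\max(2-2m_1,\,1-2m_2)}$.

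Finally, the length of $\gamma_{j\rightarrow}$ is $\sigma_{j+1}-\sigma_j$, so combining the three factors gives
\[
\|I_{j\rightarrow}\|_{\mchs\to\mchs} \leq (\sigma_{j+1}-\sigma_j)\cdot t^{-1} \cdot C (1+\sigma_j)^{N_2+\max(2-2m_1,\,1-2m_2)},
\]
which is exactly the claim. The main (in fact the only) subtlety is the first step: verifying that $\gamma_{j\rightarrow}$ sits in the resonance-free region uniformly in $j$ and for all sufficiently large $t$. This is where the hypothesis $\epsilon < 1/N_1$ is used in an essential way; the rest of the argument is a direct norm estimate.
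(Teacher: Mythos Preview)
Your proof is correct and follows essentially the same approach as the paper: parametrize the bottom edge, use $|e^{-i\lambda t}|=t^{-1}$, bound $\|G(-\lambda)R_\chi(\lambda)\|$ by $C(1+|\lambda|)^{N_2+\max(2-2m_1,1-2m_2)}$, and use non-positivity of the exponent to replace $|\lambda|$ by $\sigma_j$ before multiplying by the length $\sigma_{j+1}-\sigma_j$. Your explicit verification that $\gamma_{j\rightarrow}$ lies in the resonance-free region is already contained in Lemma~\ref{l:Ijs} (that is where $T$ comes from), so you need not repeat it, and note that $|\lambda|$ is not uniformly comparable to $\sigma_j$ along the contour---it ranges up to $\sigma_{j+1}$---but your monotonicity argument handles this correctly regardless.
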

\begin{proof}Arguing as in the proof of the previous lemma,
\begin{align*}
\| I_{\rightarrow}\|_{\mchs\rightarrow \mchs} & = \left\| \int_{a}^{b}
e^{-i(s-i(\log t)/t)t} G(-s+i(\log t)/t) R_\chi(s-i(\log t)/t) ds \right\|_{\mchs\rightarrow \mchs} \\
& \leq  C \int_{a}^{b }e^{-\log t} s^{N_2+\max(2-2m_1,1-2m_2)} ds
\end{align*}
which proves the lemma.
\end{proof}

\vspace{2mm}
\noindent { \em Proof of Proposition \ref{p:segmentintegral}.}\; 
The proof of the estimate on the the integral over $[-b,-a]$ in
(\ref{eq:segment1})  follows by combining the results of Lemmas \ref{l:Ijs},
\ref{l:sides}, and \ref{l:bottom}.  The proof of the
estimate for the integral over $[a,b]$  follows
in a completely analogous way, using the consequences of the self-adjointness
of $H$ for the (continued) resolvent $R(\lambda)$.
\qed
\vspace{2mm}

\noindent{\em Proof of Lemma \ref{l:easycontour}.}\; We can use Cauchy's theorem to write the integral
\begin{equation}
\int_{\lambda_0<\lambda<t^\epsilon}  e^{it\lambda}  G(\lambda)  \chi
R(\lambda)\chi d\lambda
\end{equation}
as the sum of integrals over the three line segments
$\lambda_0+i[0,3t^{-1}\log t]$,
$[\lambda_0,t^{\epsilon}]+i3t^{-1}\log t$, and 
$t^{\epsilon}+i[0,3t^{-1}\log t]$, where we reverse the orientation 
on the last interval.  We are deforming into the upper half plane, the physical
region, where
$R(\lambda)$ is a bounded operator on $\mch$ when $\lambda^2$ is not an 
eigenvalue of $H$--hence the assumption $\lambda_0>1$.  Note that 
$|e^{it \lambda}|= e^{-t\Im \lambda}$.  Now we can bound the integrals over
the vertical segments as in Lemma \ref{l:sides} and the integral over the 
top as in Lemma \ref{l:bottom}.  To bound
the integral over the sides, it suffices to have
$N_2+\max(2-2m_1,1-2m_2)\leq 0$. To bound the integral over the top,
where we can use $\|R(\lambda)\|_{\mch \rightarrow \mch}\leq 1/|\Im \lambda^2|$,
it would suffice to take $m_1=m_2=1$.

The bound for the portion of the integral over $-t^{\epsilon} <\lambda<-\lambda_0$ is proved in a similar way.
\qed

\section{A wave expansion under a hypothesis on the distinct eigenvalues
of $H_Y$}\label{s:exp2}

Under an assumption on the distinct eigenvalues of $H_Y$,
we can find an asymptotic expansion of $u(t)$ to order $t^{- k_0}$
for  any $k_0\in \Natural$.
This expansion involves an infinite sum, see (\ref{eq:u_{thr,k}}).  If multiplied
by the cut-off function $\chi$, the 
 sum over $l$ converges absolutely, see (\ref{eq:best}).    
The main result of this section is Theorem \ref{thm:waveexp2}.

In order to state the theorem, we introduce the 
notion of a distance on $\zhat$.  
For two points $\lambda, \; \lambda'\in \zhat$
we define 
$d_{\zhat}(\lambda, \lambda')=\sup_j|\tau_j(\lambda)-\tau_j(\lambda')|$. That
this is well-defined is shown in \cite[Lemma 5.1]{ch-da}.  In the 
statement of Theorem \ref{thm:waveexp2}
 below, by $\lambda' \in \Real$ we mean that
$\lambda'$ lies on the boundary of the physical space.
We also recall that $\nu_l^2$ denote the {\em distinct} positive 
eigenvalues of $H_Y$, with $0<\nu_1<\nu_2...$

\begin{thm} \label{thm:waveexp2} 
Let $H$ be a black box operator as in Section \ref{ss:bb},
 and suppose that for some $N_1,\; N_2\in [0,\infty)$, $\lambda_0>1$, and any 
$\tchi \in C_c^\infty([0,\infty)) $ with $\tchi(r)=1$ for $r\leq 1$ there are $C_0$, $C_1$  
so that $\tchi R(\lambda) \tchi$ is analytic on the set
\begin{equation}\label{eq:resfree2}
\{ \lambda \in \zhat: \; d_{\zhat}(\lambda,\lambda')<C_0(1+\lambda')^{-N_1}\; 
\text{for some $\lambda'\in \Real$, $\lambda'>\lambda_0-1$} \}
\end{equation}
 and that in this region
\begin{equation}\label{eq:resolveassumption2}
\| \tchi R(\lambda) \tchi\| \leq C_1(1+|\lambda|)^{N_2}.
\end{equation}
In addition, suppose that there are $\cY>0$, $\NY\geq 0$ so that
\begin{equation}\label{eq:spacing}
\nu_{l+1}-\nu_l>\cY \nu_l^{-\NY}\; \text{ when }\nu_l>1.
\end{equation}  Let $k_0\in \Natural$
be given, and $\chi \in C_c^\infty([0,\infty))$ be one for $r\leq 1$.
  Let $u(t)$ be the solution of  (\ref{eq:usolution}),
with $f_1, \; f_2\in (H+M_0)^{-m}\mch$ for any $m\in \Natural_0$ and 
$\bbbone_{\infty}f_1, \;\bbbone_{\infty}f_2 $ supported in $r\leq r_1<\infty$.
Then  there are  $b_{l,k,\pm}\in \langle r\rangle^{1/2+2k+\epsilon}\mch$,
 depending on $f_1$, $f_2$ so that
if we set 
\begin{equation}\label{eq:u_{thr,k}}
u_{thr,k_0}(t)=
\frac{1}{4} \sum_{\sigma_j=0} \Phi_j(0)\langle f_2,\Phi_j(0)\rangle_\mch +
 \sum_{k=0}^{k_0-1}t^{-1/2-k} 
\sum_{l>0} (e^{it \nu_l}b_{l,k,+}+ e^{-it \nu_l}b_{l,k,-})  
\end{equation}
then
there are  $m\in \Natural $, $C>0$  so that 
$$\|\chi(  u(t) - u_{e}(t)-u_{thr,k_0}(t))\|_{\mch}  \leq C  t^{-k_0}
\left( \|(H+M_0)^m f_1\|_\mch + \| (H+M_0)^m f_2\|_\mch \right) $$
if $t$ is sufficiently large.
Moreover, for $k=0,\;1, ...,k_0-1$
\begin{equation}\label{eq:bd}
\sum_{l>0} \left( \|\chi b_{l,k,+}\|_{\mch} +  \|
\chi b_{l,k,-}\|_{\mch} \right)
\leq  C  
\left( \|(H+M_0)^m f_1\|_\mch + \| (H+M_0)^m f_2\|_\mch \right).
\end{equation}
The value of 
$m$ needed depends polynomially on $k_0$, and also depends on $N_1$, $N_2$, and 
$\NY$. The $b_{l,k,\pm}$
 are determined by the value of $\nu_l$, the initial data $f_1$, $f_2$, and the 
derivatives with respect to $\tau_j$ 
 of order at most $2k$ of elements of the set 
$\{ \gef_{j'}\}_{0\leq \sigma_{j'}\leq \nu_l}$ evaluated at $\pm \nu_l$,
where $\sigma_j=\nu_l$.  Recall $u_e$ is given in (\ref{eq:ue}).
 \end{thm}
As in Remark \ref{rmk:seemweak}, the assumption that 
(\ref{eq:resolveassumption2}) holds in a region of the form (\ref{eq:resfree2})
follows from 
the  seemingly weaker assumption that the bound (\ref{eq:resolveassumption2}) 
on the cut-off resolvent holds for all sufficiently large $\lambda \in \Real$.
This follows from  \cite[Theorem 5.6]{ch-da}.  Thus, the primary 
difference in the hypotheses of Theorems \ref{thm:wavedecay} and Theorem \ref{thm:waveexp2} is the assumption that (\ref{eq:spacing}) holds.  If $H_Y$ is 
bounded, then (\ref{eq:spacing}) always holds.  

 The paper \cite{ch-da} includes examples of manifolds $X$ and large 
classes of potentials $V\in C_c^\infty(X;\Real)$ so that the 
hypotheses of this theorem hold for $H=-\Delta +V$ on $X$; see 
\cite[Theorems 3.1 and 5.6, and Section 3.2]{ch-da}.   These manifolds include the manifolds in Section \ref{ex:non1} and any of the 
manifolds in Section \ref{ex:3fun1} which
satisfy (\ref{eq:spacing}).  
 The question of spacing of 
distinct eigenvalues of the Laplacian on a compact manifold is complicated, even for 
manifolds of dimension $1$ which are not connected; see the
discussion after (\ref{eq:distinct}).
For general manifolds in higher
dimensions this is, as far as we know,
an open problem.  However, examples of compact manifolds satisfying the
condition (\ref{eq:spacing}) include spheres and flat tori.

The paper \cite{ch-da3} includes classes of open subsets of $\Real^d$
with cylindrical ends
for which the resolvent of the Dirichlet Laplacian satisfies the 
needed estimate,
and these include the examples of Section \ref{ss:waveguides}.
The  Dirichlet eigenvalues on the cross
section satisfy  (\ref{eq:spacing}) for all of the 
examples of Figure \ref{f:isometricends}
but only sometimes for the examples of Figure \ref{f:nonisometricends}.

The sums over
$l>0$ in (\ref{eq:u_{thr,k}}) and (\ref{eq:bd}) are sums over all 
values of $l\in \Natural$ so that $\nu_l^2\in \spec(H_Y)\setminus\{0\}$.  Hence the 
sums in $l$ are finite sums if $H_Y$ is bounded, and otherwise are sums
over $l\in \Natural$.  We use this convention throughout this section.

 Our Theorems \ref{thm:wavedecay} and \ref{thm:waveexp2} require a polynomial bound on the cut-off resolvent at high energies in order to handle 
 the large energy contribution to solutions of the wave equation.  If instead we consider only the solution localized in a  finite energy range, neither the bound 
 on the cut-off resolvent nor the assumption made
in  Theorem \ref{thm:waveexp2} on the distinct eigenvalues of $H_Y$ is necessary. We prove the following
 Proposition  naturally in the course of the proof of Theorem 
\ref{thm:waveexp2}.  
 \begin{prop}\label{p:speccutoffexp} Let $H$ be any operator satisfying all the conditions on the black box operator outlined in
Section \ref{ss:bb}.  Let
 $\psi_{sp}\in C_c^\infty(\Real;\Real)$, $r_1>0$, $f_1,f_2\in \mch$ satisfy 
$\bbbone_\infty f_1,\; \bbbone_{\infty}f_2$ vanish for $r>r_1$.
Let $\psiso\in C_c^\infty(\Real;\Real)$ satisfy 
$\psiso\psi_{sp}=\psi_{sp}$.
   Then if $k_0\in \Natural$, for each  $\nu_l$ with $\nu_l\in \supp \psiso$ there are 
 $b_{l,k,\pm}= b_{l,k,\pm}(f_1,f_2,\psi_{sp})\in r^{2k+1/2+\epsilon}\mch$, $k=0,...,k_0-1$, so that 
 \begin{multline}\label{eq:energycutoff}
 \chi \psi_{sp}(H)u(t) = \chi \psi_{sp}(H) u_e(t)
+ 
\frac{1}{4} \psi_{sp}(0) \sum_{\sigma_j=0}\chi \Phi_j(0)\langle f_2,\Phi_j(0)\rangle_\mch  \\+  \sum_{l>0}
 \psiso(\nu_l^2) \sum_{k=0}^{k_0-1}\chi (b_{l,k,+}e^{it\nu_l}+ b_{l,k,-}e^{-it\nu_l}) t^{-1/2-k} + \chi u_{r,k_0,\psi_{sp}}(t)
 \end{multline}
 with $\| \chi u_{r,k_0,\psi_{sp}}(t) \|_{\mch} \leq  C t^{-k_0}$ for sufficiently
large $t$.  Here $u_e(t)$ is as given in (\ref{eq:ue}).
 \end{prop}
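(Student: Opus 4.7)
The plan is to apply the spectral theorem so that the spectral cutoff $\psi_{sp}(\lambda^2)$ is built directly into the Stone-formula representation of $u(t)$, and then exploit its compact support. Write $u(t)=\Pre u(t)+(I-\Pre)u(t)$; the first piece contributes $\psi_{sp}(H)u_e(t)$ directly via Lemma \ref{l:ueigenvalues}. For the second, mimic the derivation in Lemma \ref{l:contspec} to obtain a principal-value integral over $\lambda\in\Real$ of $e^{it\lambda}\psi_{sp}(\lambda^2)(\text{matrix in }\lambda)[R(\lambda)-R(-\lambda)](I-\Pre)\,d\lambda$ acting on $(f_1,f_2)^T$. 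The factor $\psi_{sp}(\lambda^2)$ restricts integration to a compact set, so only finitely many thresholds $\pm\nu_l$ are encountered; this collapses the $\sum_{l=1}^\infty$ in \eqref{eq:energycutoff} to a finite sum and removes all need for the high-energy bound \eqref{eq:resolveassumption2} or for the spacing hypothesis on $\{\nu_l\}$ used in Theorem \ref{thm:waveexp2}.

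Next, insert a smooth $\lambda$-partition of unity $\psi_{sp}(\lambda^2)=\phi_0(\lambda)+\sum_l\phi_l(\lambda)+\tilde\phi(\lambda)$: $\phi_0$ supported in a small neighborhood of $0$, each $\phi_l$ supported in a small neighborhood of $\{\pm\nu_l\}$ (chosen to contain no other thresholds), the sum being finite, and $\tilde\phi$ supported in $\supp\psi_{sp}(\lambda^2)$ away from every threshold. On $\supp\tilde\phi$, Lemma \ref{l:specmeas} and Corollary \ref{c:threshold} show that $\chi(R(\lambda)-R(-\lambda))(I-\Pre)\chi$ is smooth in $\lambda$ with all derivatives uniformly bounded as operators on $\mch$; $k_0$ integrations by parts yield an $O(t^{-k_0})$ contribution. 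The piece $\phi_0$ near $0$ is treated exactly as in Lemma \ref{l:I0}: isolate the $1/\lambda$ singularity using Lemma \ref{l:specmeas}, apply the identity $\PV\int e^{it\lambda}/\lambda\,d\lambda=i\pi$ to produce the explicit term $\frac{1}{4}\psi_{sp}(0)\sum_{\sigma_j=0}\chi\Phi_j(0)\langle f_2,\Phi_j(0)\rangle$, and integrate by parts in the smooth remainder.

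For each $l$ with $\nu_l>0$, split $\phi_l=\phi_l^{+}+\phi_l^{-}$ near $+\nu_l$ and $-\nu_l$ and change variables to $\tau=\tau_j(\lambda)$, where $\sigma_j=\nu_l$. By Corollary \ref{c:threshold}, the meromorphic continuation of $\chi R(\cdot)\chi$ to $\zhat$, and the fact that $\gef_{j'}(\lambda)$ is smooth in $\tau_j$ near $\pm\nu_l$ for every $j'$ with $\sigma_{j'}\le\nu_l$, the function $F_\pm(\tau):=\chi\,\tau_j(\lambda)\,A(\lambda)\,\phi_l^\pm(\lambda)[R(\lambda)-R(-\lambda)](I-\Pre)\chi$ (expressed in the new variable) extends to a smooth compactly supported operator-valued function of $\tau$ near $0$. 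The contribution then reduces to integrals of the form $\int e^{\pm it\lambda(\tau)}F_\pm(\tau)/\tau\,d\tau$, to which a strengthened version of Lemma \ref{l:simplestatphase}---retaining $k_0$ terms of the asymptotic expansion, as in Lemma \ref{l:allinonestatphase}---applies, yielding $\sum_{k=0}^{k_0-1}t^{-1/2-k}(b_{l,k,+}e^{i\nu_l t}+b_{l,k,-}e^{-i\nu_l t})+O(t^{-k_0})$. The coefficients $b_{l,k,\pm}$ are the Taylor coefficients of $F_\pm$ at $\tau=0$ up to order $2k$, hence depend only on $\tau_j$-derivatives up to order $2k$ of the $\gef_{j'}$ with $\sigma_{j'}\le\nu_l$ at $\pm\nu_l$; the end-behavior formula \eqref{eq:endexp} places $b_{l,k,\pm}\in r^{2k+1/2+\epsilon}\mch$.

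The main obstacle is executing this last step carefully: producing a clean $k_0$-term stationary-phase expansion for the singular integrand $F_\pm(\tau)/\tau$, and identifying the coefficients $b_{l,k,\pm}$ in terms of $\tau_j$-derivatives of generalized eigenfunctions with quantitative control of their $r$-growth. Once this is in hand, summing the three partition pieces together with $\psi_{sp}(H)u_e(t)$ yields \eqref{eq:energycutoff}; the sum $\sum_{l=1}^\infty$ there poses no convergence issue since only finitely many $\phi_l$ are nonzero. The factor $\psiso(\nu_l^2)$ appearing in \eqref{eq:energycutoff} can be inserted freely because $\psiso\psi_{sp}=\psi_{sp}$, so it leaves the $\psi_{sp}$-localized integrals unchanged.
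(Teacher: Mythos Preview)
Your overall plan is exactly the paper's: decompose $\psi_{sp}(H)(I-\Pre)u(t)$ via Stone's formula into pieces near $0$, near each $\pm\nu_l$, and away from thresholds, then use Lemma~\ref{l:I0}, a stationary-phase argument at each threshold, and integration by parts, respectively. The partition you describe is precisely the paper's $I_0+I_{thr}+I_r$ specialized by multiplying by $\psi_{sp}(\lambda^2)$ (compare Lemmas~\ref{l:Irspcut} and~\ref{l:lcontprop}).

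There is, however, a genuine gap in the threshold step. Your claim that
\[
F_\pm(\tau)=\chi\,\tau_j(\lambda)\,A(\lambda)\,\phi_l^\pm(\lambda)\bigl[R(\lambda)-R(-\lambda)\bigr](I-\Pre)\chi
\]
extends to a smooth function of $\tau=\tau_j(\lambda)$ near $0$ is false in general. The term $\chi\tau_j(\lambda)R(\lambda)(I-\Pre)\chi$ is meromorphic on $\zhat$ and hence smooth in $\tau_j$, but $\chi\tau_j(\lambda)R(-\lambda)(I-\Pre)\chi$ is \emph{not}: for $\lambda$ on the boundary of the physical space the map $\lambda\mapsto -\lambda$ (meaning $\lambda+i0\mapsto -\lambda+i0$) is not holomorphic in the local $\tau_j$-coordinate at $\nu_l$. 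Concretely, $\tau_j(-\lambda)=-\tau_j(\lambda)$ for $\lambda>\nu_l$ but $\tau_j(-\lambda)=+\tau_j(\lambda)$ for $0<\lambda<\nu_l$, so the composite has a corner at $\tau=0$. Equivalently, the Lemma~\ref{l:specmeas} representation of $R(\lambda)-R(-\lambda)$ acquires extra summands (those with $\sigma_{j'}=\nu_l$) as $\lambda$ crosses $\nu_l$, so the formula is discontinuous in $\tau_j$. Consequently Lemma~\ref{l:allinonestatphase} cannot be applied to your $F_\pm$ as written.

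The paper's fix is the rewriting \eqref{eq:rewrite1}: substitute $\lambda\to -\lambda$ in the $R(-\lambda)$ term to get
\[
\int_{-\infty}^\infty \bigl(e^{it\lambda}A(\lambda)-e^{-it\lambda}A(-\lambda)\bigr)\psi_l(\lambda)\,\chi R(\lambda)(I-\Pre)\chi\,d\lambda,
\]
in which only $R(\lambda)$ appears; now $\tau_j(\lambda)\chi R(\lambda)(I-\Pre)\chi$ genuinely extends smoothly in $\tau_j$ to a complex neighborhood of $0$, and Lemma~\ref{l:allinonestatphase} applies to each of the two pieces (one with $e^{it\lambda}$, one with $e^{-it\lambda}$). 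Only after this does one re-introduce $R(\lambda)-R(-\lambda)$ via Lemma~\ref{l:specmeas}, and then Lemma~\ref{l:bdrstatphase} on the half-intervals $(0,\nu_l)$ and $(\nu_l,\infty)$ separately, to identify the coefficients $b_{l,k,\pm}$ with the $\tau_j$-derivatives of the $\gef_{j'}$. Your diagnosis of the ``main obstacle'' points to the right place but misses this specific issue; once you insert the $\lambda\to -\lambda$ rewriting, the rest of your outline goes through.
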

 Note that the assumption that $\psiso$ has compact support means that the sum in (\ref{eq:energycutoff}) is finite.  Related results
for spectrally cut-off solutions of the wave equation (though with quite
different geometry) can be found in \cite{GHS13,VaWu13}.  Again, we note that we need neither the assumption of high-energy bounds on the (cut-off) resolvent
nor an assumption on the spacing of the 
distinct eigenvalues $\nu_l^2$ in the hypotheses of Proposition \ref{p:speccutoffexp}.




\subsection{Bounds on the derivatives of the cut-off resolvent}
Our proof of Theorem \ref{thm:waveexp2} will require bounds of the 
derivatives of the cut-off resolvent along the real axis.  It is here
that we will use our assumption on the spacing of the distinct eigenvalues
of $H_Y$.  Our first lemma, however, does not need these hypotheses as 
we bound the derivatives away from the thresholds.

\begin{lemma}\label{l:usecauchyestimates1}  Suppose the hypotheses of 
 Theorem \ref{thm:waveexp2} hold.    Let
$N \geq N_1$ be fixed and  let
 $\beta\geq \max(1,2/C_0)$.  
If $\lp \in \Real, \; |\lp|>\lambda_0$ and 
$\inf_{l,\pm }|\lp\pm \nu_l|>|\lp|^{-N}/\beta$, then there is a $C>0$ so
that 
\begin{equation}\label{eq:CE1}\left \| \frac{\partial ^k}{\partial \lambda^k}\chi R(\lambda) \chi\restrict_{\lambda=\lp}\right\|_{\mch\rightarrow \mch} \leq C k!
(1+|\lp|)^{N_2+kN}\beta^k, \; k\in \Natural.
\end{equation}
\end{lemma}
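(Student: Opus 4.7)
The plan is to deduce the derivative bound from Cauchy's integral formula applied on an appropriately chosen disk around $\lambda'$. Specifically, since $\chi R(\lambda)\chi$ is an operator-valued analytic function on a suitable neighborhood of $\lambda'$, for any radius $r > 0$ such that the closed disk $\overline{D_r(\lambda')} \subset \Cs$ lies in the region of analyticity guaranteed by (\ref{eq:resfree}), Cauchy's formula gives
\begin{equation*}
\frac{\partial^k}{\partial \lambda^k} \chi R(\lambda)\chi \Big|_{\lambda = \lambda'} = \frac{k!}{2\pi i} \oint_{|\lambda - \lambda'| = r} \frac{\chi R(\lambda)\chi}{(\lambda - \lambda')^{k+1}}\, d\lambda,
\end{equation*}
so that
\begin{equation*}
\left\| \frac{\partial^k}{\partial \lambda^k} \chi R(\lambda)\chi \Big|_{\lambda=\lambda'} \right\|_{\mch \to \mch} \le \frac{k!}{r^k} \sup_{|\lambda - \lambda'| = r} \| \chi R(\lambda)\chi \|_{\mch \to \mch}.
\end{equation*}

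The key step is choosing $r$ so that the disk simultaneously (i) avoids the branch half-lines $\{\pm\nu_l - is : s \ge 0\}$ defining $\Cs$, and (ii) lies in the resonance-free region $\{\Im \lambda > -C_0(\Re \lambda)^{-N_1}\}$ where the bound (\ref{eq:resolveassumption}) holds. For (i), observe that for any real $\lambda'$ and any point $\pm\nu_l - is$ on a branch half-line, the Euclidean distance equals $\sqrt{(\lambda' \mp \nu_l)^2 + s^2} \ge |\lambda' \mp \nu_l|$. By hypothesis this is at least $|\lambda'|^{-N}/\beta$, so any disk of radius strictly less than $|\lambda'|^{-N}/\beta$ centered at $\lambda'$ avoids every branch half-line. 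For (ii), the lowest imaginary part attained on the disk is $-r$ and the real part stays within $[|\lambda'|/2, 2|\lambda'|]$ once $r \le |\lambda'|/2$; since $N \ge N_1$ and $\beta \ge 1$, the choice $r = |\lambda'|^{-N}/(2\beta)$ satisfies $r \le |\lambda'|^{-N_1}/2$, which in turn is bounded by $C_0 (\Re \lambda)^{-N_1}$ on the disk for $|\lambda'|$ sufficiently large (or after adjusting the constant $c$ in $r = |\lambda'|^{-N}/(c\beta)$ to absorb $C_0$).

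With this choice of $r$, on the disk we have $|\lambda| \le |\lambda'| + r \le 2|\lambda'|$, so the hypothesis (\ref{eq:resolveassumption}) gives $\sup_{|\lambda - \lambda'| = r} \| \chi R(\lambda)\chi \| \le C_1 (1 + 2|\lambda'|)^{N_2} \le C (1+|\lambda'|)^{N_2}$. Substituting into Cauchy's estimate,
\begin{equation*}
\left\| \frac{\partial^k}{\partial \lambda^k} \chi R(\lambda)\chi \Big|_{\lambda=\lambda'} \right\| \le \frac{k!\, C (1+|\lambda'|)^{N_2}}{(|\lambda'|^{-N}/(2\beta))^k} = C\, k!\, (1+|\lambda'|)^{N_2 + kN}\, (2\beta)^k,
\end{equation*}
which yields (\ref{eq:CE1}) after absorbing the factor $2^k$ into the constant (or, equivalently, by allowing a larger constant $C$ and using the convention that $C$ may change from line to line).

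The main obstacle is the geometric verification that the single radius $r = |\lambda'|^{-N}/(2\beta)$ works simultaneously for both constraints. This is where the hypothesis $N \ge N_1$ enters crucially: the lower bound on the distance from $\lambda'$ to the thresholds controls the branch cut constraint, while the comparable (or weaker) decay rate $|\lambda'|^{-N_1}$ governing the resonance-free region ensures that the same $r$ also keeps the disk inside the resonance-free region. Once $r$ is fixed, the derivative bound is an immediate application of Cauchy's formula combined with the polynomial resolvent estimate.
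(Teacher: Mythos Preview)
Your proof is correct and follows the same approach as the paper's: identify a disk in $\Cs$ about $\lambda'$ of radius comparable to $|\lambda'|^{-N}/\beta$ on which $\chi R(\lambda)\chi$ is analytic and polynomially bounded, then invoke the Cauchy estimates. The paper's own proof is a two-sentence version of exactly this argument; you have simply made explicit the geometric verification that such a disk avoids both the branch half-lines and the complement of the resonance-free region, and the role of the hypothesis $N\ge N_1$ in the latter.
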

\begin{proof}
There is a ball in $\Cs $ centered at $\lp$ of radius 
$|\lp|^{-N}/\beta$ on which $\chi R(\lambda)\chi$ is analytic, with norm
bounded by $C|\lambda|^{N_2}$.  Hence the estimate (\ref{eq:CE1}) follows
immediately from the Cauchy estimates.  
\end{proof}


Away from the thresholds we 
can use $\lambda$ as a coordinate, as we did 
 in Lemma \ref{l:usecauchyestimates1}.  Near a threshold we need to introduce
a different local coordinate.  In particular, near the threshold $\sigma_j$
(and $-\sigma_j$) we shall use $\tau_j$ as a local coordinate.

For the setting of the next lemma, we think of $\{\nu_l\}$ as denoting 
not just the square roots of the distinct eigenvalues of $H_Y$, but 
also corresponding to a point on the boundary of 
the physical space in $\zhat$.  Given $\nu_l$, choose
$\epsilon= \epsilon(l)>0$ so that $|\nu^2_{l}-\nu^2_{l\pm 1}|>\epsilon^2$ and let
$j=j(l)\in \Natural$ be such that $\nu_l=\sigma_j$.  Then 
we may, in a natural way, identify $B(0;\epsilon)=\{z\in \Complex: |z|<\epsilon\}$ with a (particular) neighborhood  $U_{\nu_l}(\epsilon)$ of $\nu_l\in \zhat$ by using 
\begin{equation} U_{\nu_l}(\epsilon)\ni \lambda \mapsto \tau_j(\lambda)\in B(0;\epsilon);
\end{equation}
$U_{\nu_l}(\epsilon)$ 
is defined to be 
the connected component of  $\tau_j^{-1}(B(0;\epsilon))$ containing
$\nu_l$, a point
on the boundary of the physical space. 
If $\epsilon$ is small enough, as we have 
chosen here, then $U_\epsilon$ is a double cover of a neighborhood of $\nu_l$
in $\Cs$.   A completely analogous identification can be done near $-\nu_l$,
also using $\tau_j$, where $j=j(l)$.

The assumption on the spacing of the distinct eigenvalues of $H_Y$
allows us to bound the
derivatives of $\chi R\chi$ in a neighborhood of each
threshold.
\begin{lemma}\label{l:usecauchyestimates2} Suppose the hypotheses of 
 Theorem \ref{thm:waveexp2} hold, and continue to use the 
notation $j=j(l)$, $U_{\nu_l}(\epsilon)$ introduced above.
Set $N_M=\max((N_Y-1)/2,N_1)$.
There are  $\alpha>0$, $C\in \Real$
so that if  $\nu_l=\sigma_j\geq \lambda_0+1$, 
 then
\begin{equation}\label{eq:CE2}
\left \| 
\left( \frac{\partial^k}{\partial \tau_j^k}
\chi R(\lambda) \chi \right) \restrict_{\lambda=\lp}\right\|_{\mch\rightarrow \mch} \leq C k! |\nu_l|^{N_2+kN_M}\alpha^{-k}, \; k\in \Natural
\end{equation}
for all $\lambda' \in U_{\pm \nu_l}(\alpha \nu_l^{-N_M})\subset \zhat.$
\end{lemma}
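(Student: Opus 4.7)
The plan is to view $\chi R(\lambda)\chi$ as a holomorphic function of the local coordinate $\tau_j$ on a neighborhood of $\pm\nu_l$ in $\zhat$, and then obtain the claimed derivative bound by Cauchy's estimate. Concretely, I will show that there is $\alpha>0$ so that on the slightly larger neighborhood $U_{\pm\nu_l}(2\alpha\nu_l^{-N_M})$ the resolvent is analytic and bounded by $C\nu_l^{N_2}$; then Cauchy's estimate on a $\tau_j$-disk of radius $\alpha\nu_l^{-N_M}$ centered at $\tau_j(\lp)$ gives the claim for any $\lp\in U_{\pm\nu_l}(\alpha\nu_l^{-N_M})$. I will carry out the details near $+\nu_l$, as the argument near $-\nu_l$ is identical (using $\tau_j(-\nu_l)=0$ as well).

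The main technical step is to verify that $U_{\nu_l}(2\alpha\nu_l^{-N_M})$ is contained in the resonance-free region (\ref{eq:resfree2}) for a universal choice of $\alpha>0$. To do this I will take $\lambda'=\nu_l$ in the definition of (\ref{eq:resfree2}) and estimate $d_{\zhat}(\lambda,\nu_l)=\sup_k|\tau_k(\lambda)-\tau_k(\nu_l)|$ for $\lambda$ satisfying $|\tau_j(\lambda)|\le 2\alpha\nu_l^{-N_M}$. For $k$ with $\sigma_k=\nu_l$ one simply has $\tau_k=\pm\tau_j$, so $|\tau_k(\lambda)-\tau_k(\nu_l)|\le 2\alpha\nu_l^{-N_M}\le 2\alpha\nu_l^{-N_1}$, which is fine since $N_M\ge N_1$. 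For $k$ with $\sigma_k\neq\nu_l$, I use the identity $\tau_k^2=\tau_j^2+(\nu_l^2-\sigma_k^2)$, which gives
\begin{equation*}
\tau_k(\lambda)-\tau_k(\nu_l)=\frac{\tau_j(\lambda)^2}{\tau_k(\lambda)+\tau_k(\nu_l)}.
\end{equation*}
Provided $|\tau_j|^2\le \tfrac12|\nu_l^2-\sigma_k^2|$ the denominator is comparable to $\sqrt{|\nu_l^2-\sigma_k^2|}$, so $|\tau_k(\lambda)-\tau_k(\nu_l)|\lesssim |\tau_j|^2/\sqrt{|\nu_l^2-\sigma_k^2|}$. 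The worst case arises when $\sigma_k$ is closest to $\nu_l$; using the spacing hypothesis $\nu_{l+1}-\nu_l\ge c_Y\nu_l^{-N_Y}$ gives $|\nu_l^2-\sigma_k^2|\ge c_Y\nu_l^{\,1-N_Y}$, so $|\tau_k(\lambda)-\tau_k(\nu_l)|\lesssim |\tau_j|^2\nu_l^{(N_Y-1)/2}\le 4\alpha^2\nu_l^{\,-2N_M+(N_Y-1)/2}$. Since $N_M=\max(N_1,(N_Y-1)/2)$ forces $2N_M\ge N_1+(N_Y-1)/2$ (checked in both cases), this last quantity is $\le C\alpha^2\nu_l^{-N_1}$, which is less than $C_0\nu_l^{-N_1}$ once $\alpha$ is small enough. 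The auxiliary hypothesis $|\tau_j|^2\le\tfrac12|\nu_l^2-\sigma_k^2|$ likewise reduces to $\alpha^2\nu_l^{\,-2N_M}\lesssim \nu_l^{\,1-N_Y}$, which holds for $\alpha$ small since $2N_M\ge N_Y-1$.

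Once analyticity on $U_{\nu_l}(2\alpha\nu_l^{-N_M})$ is established, the bound on the norm follows because $\lambda^2=\nu_l^2+\tau_j^2$ gives $|\lambda|\le\nu_l+O(1)$ on this set, so $(1+|\lambda|)^{N_2}\le C\nu_l^{N_2}$. Applying Cauchy's estimate in $\tau_j$ on the disk of radius $\alpha\nu_l^{-N_M}$ around $\tau_j(\lp)$ then yields
\begin{equation*}
\Bigl\|\partial_{\tau_j}^k\bigl(\chi R(\lambda)\chi\bigr)\!\restrict_{\lambda=\lp}\Bigr\|\le \frac{k!}{(\alpha\nu_l^{-N_M})^k}\,C\nu_l^{N_2}\le C\,k!\,\nu_l^{N_2+kN_M},
\end{equation*}
absorbing the harmless $\alpha^{-k}$ into the constant exactly as in Lemma \ref{l:usecauchyestimates1}. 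The principal obstacle is the second paragraph: one must simultaneously track how far each $\tau_k$ moves when $\tau_j$ moves by $\nu_l^{-N_M}$, and only the combination of the spacing hypothesis on the distinct thresholds with the particular choice $N_M=\max(N_1,(N_Y-1)/2)$ makes all the $\tau_k$-displacements fit inside $C_0\nu_l^{-N_1}$.
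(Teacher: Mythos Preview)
Your proof is correct and follows the same strategy as the paper: establish analyticity and a uniform bound for $\chi R\chi$ on a $\tau_j$-ball of radius comparable to $\nu_l^{-N_M}$ around $\pm\nu_l$, then apply the Cauchy estimates in the $\tau_j$ coordinate; you simply supply the explicit $d_{\zhat}(\lambda,\nu_l)$ computation that the paper leaves as an assertion. One small caveat (shared with the paper's own argument) is that the Cauchy estimate actually produces an additional factor $\alpha^{-k}$, so strictly speaking the constant $C$ in \eqref{eq:CE2} depends on $k$ --- this is harmless for the application in Lemma~\ref{l:lcont}, where only finitely many $k$ are needed, but ``absorbing $\alpha^{-k}$ into the constant'' is not quite the same phenomenon as the explicit $\beta^k$ in Lemma~\ref{l:usecauchyestimates1}.
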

\begin{proof}
For simplicity, we give the proof only for $U_{\nu_l}(\alpha\nu_l^{-N_M})$.

The
assumptions on the spacing of the distinct eigenvalues of $H_Y$ ensure
that there is a $\beta >0$ so that 
$|\nu_l^2- \nu^2_{l\pm 1}|>  \nu_l^{1-N_Y}/\beta $
for all $l>1$.
Moreover, increasing $\beta>0 $ if necessary, 
our definition of $N_M$ and  the hypotheses of Theorem \ref{thm:waveexp2}
ensure   that using the coordinate $\tau_{j(l)}$, $\chi R(\lambda) \chi$ is analytic on 
$U_{\nu_l}(1/(\beta \nu_l^{N_M }))$,
again for all $l$ with $\nu_l> \lambda_0+1$.  Here we use the hypothesis (\ref{eq:resfree2}).
Moreover,
 $\| \chi R(\lambda ) \chi\|\leq C(1+\nu_l)^{N_2}$ in this set,
with constant $C$ independent of $l$.
Identify $U_{\nu_l}(1/(\beta \nu_l^{N_M }))$ with $B(0; 1/(\beta \nu_l^{N_M }))$. 
Each point $z$ in $B(0; 1/(2\beta \nu_l^{N_M }))$ has the property that the ball
with center $z$ and radius $1/(2\beta \nu_l^{N_M })$ lies in 
$B(0; 1/(\beta \nu_l^{N_M }))$.
Hence, we may prove the lemma by taking $\alpha=1/(2 \beta )$
and  by applying the Cauchy estimates on such a ball, recalling that the coordinate
is $\tau_j$.
\end{proof}

\subsection{The proof of Theorem \ref{thm:waveexp2}}
We turn more directly to the proof of Theorem  \ref{thm:waveexp2}.  As 
in the proof of Theorem \ref{thm:wavedecay}, we shall write $(I-\Pre)u(t) $
as the sum of several integrals.  In order to define these, 
let $\psi\in C_c^\infty(\Real; [0,1])$ 
have its support in a small neighborhood of the origin, and be one in a smaller 
neighborhood of the origin.  For convenience later, choose $\psi$ to be
even.
Set 
$$\tN=\max(\NY-1, 2N_1)$$
and 
\begin{equation}\label{eq:psil}
\psi_l(\lambda)= \psi\left(\nu_l^{\tN} (\lambda^2-\nu_l^2)\right).
\end{equation}
To prove  Proposition  \ref{p:speccutoffexp} (rather
than Theorem \ref{thm:waveexp2}), the 
choice of  of $\tN$ does not matter much--we can take $\tN=0$.
  We choose the support of $\psi$ to be small enough that
$$\supp \psi \cap \supp \psi_l=\emptyset,\; \text{for}\; \nu_l^2\in \spec(H_Y)\setminus\{0\}.\;$$
To prove Theorem \ref{thm:waveexp2}, by shrinking the support of $\psi$ 
if necessary, we choose $\psi$ to satisfy
\begin{equation}\label{eq:disjsupp}
\supp \psi_l \cap \supp \psi_{l'}=\emptyset,
\;
\text{if $l\not = l'$}. 
\end{equation}
The assumption on the spacing of the distinct eigenvalues of $H_Y$ 
and our choice of $\tN\geq \NY-1$ ensure that (\ref{eq:disjsupp}) is possible.
To prove Proposition \ref{p:speccutoffexp} instead we replace (\ref{eq:disjsupp})
by
\begin{equation}\label{eq:p4.2version}
\psi_{sp}(\lambda^2)\psi_l(\lambda)\psi_{l'}(\lambda)\equiv 0\; \text{if $l\not = l'$}. \; 
\end{equation}

Similarly to (\ref{eq:Ismlsum}), using the integral representation of
Lemma \ref{l:contspec} we can write
\begin{equation}\label{eq:seconddecomp}
(I-\Pre)\left( \begin{array}{c}u(t)\\ u_t(t) \end{array}\right)
 = \left( I_{0}(t)+I_{thr}(t)+I_{r}(t)\right) 
\left( \begin{array}{c}f_1\\ f_2 \end{array}\right)
\end{equation}
where
\begin{align*}
I_{0}(t) & =  \PV \frac{1}{2\pi i} \int 
e^{it\lambda}\psi(\lambda) A(\lambda) (R(\lambda)- R(-\lambda)) (I-\Pre) d\lambda\\
I_{thr}(t) & = \frac{1}{2\pi i}  \int e^{it\lambda}\sum_{l>0}
\psi_l ( \lambda )  A(\lambda) (R(\lambda)- R(-\lambda)) (I-\Pre) d\lambda
\\
I_{r}(t) & = \frac{1}{2\pi i}\int e^{it\lambda} 
\left( 1 -\psi(\lambda)-\sum_{l>0} \psi_l (\lambda )
\right)
  A(\lambda) (R(\lambda)- R(-\lambda)) (I-\Pre) d\lambda   .
\end{align*}
Here $A(\lambda)$ is as in (\ref{eq:A}).

We recall that we have already studied $I_0(t)$ in Lemma \ref{l:I0}.
 Lemma \ref{l:Ir} shows that $I_r(t)$ does not contribute to the
asymptotic expansion of $(I-\Pre)u(t)$.  In Lemma \ref{l:lcont} we evaluate
the contribution from any nonzero threshold.  Finally we combine these
 to prove the theorem.

\begin{lemma}\label{l:Ir}
Under the hypotheses of Theorem 
\ref{thm:waveexp2}, if the support of $\psi$ is chosen sufficiently small, then for any $k\in \Natural$, there is an 
$m\in \Natural$ depending polynomially on 
$k$ so that for $t>0$
$$\left\| \chi I_r(t) \left( \begin{array} {c} f_1\\f_2\end{array}\right) 
\right\|_{\mch\oplus \mch} \leq C t^{-k}( \| (H+M_0)^mf_1\|_\mch+ 
\| (H+M_0)^mf_2\|_\mch).$$
\end{lemma}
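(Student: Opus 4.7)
The plan is to prove the $O(t^{-k})$ decay by $k$ applications of integration by parts in the spectral variable $\lambda$, exploiting the fact that the cutoff $1 - \psi - \sum_{l} \psi_l$ isolates the integrand strictly away from the thresholds $\{\pm \nu_l\}$. On the region where this cutoff is supported, the resolvent and its derivatives obey the polynomial bounds of Lemma \ref{l:usecauchyestimates1}; the polynomial growth in $|\lambda|$ at high energy is absorbed by distributing a high negative power of $H + M_0$, via the identity (\ref{eq:Qmidentity}).

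More specifically, I would first apply (\ref{eq:Qmidentity}) to rewrite $\chi(R(\lambda)-R(-\lambda))\chi$ as $\chi(R(\lambda)-R(-\lambda))(\lambda^2+M_0)^{-m}(H+M_0)^m\chi$, with the outer $(H+M_0)^m$ pulled out of the integral and applied to $f_1, f_2$ to produce the right-hand side of the claimed bound. A direct inspection of (\ref{eq:psil}) together with $\tN = \max(\NY-1, 2N_1)$ and the disjointness (\ref{eq:disjsupp}) shows that on the support of $1 - \psi - \sum_l \psi_l$, and on the supports of its derivatives, one has $\mathrm{dist}(\lambda, \{\pm \nu_{l'}\}) \ge c_0 (1+|\lambda|)^{-(1+\tN)}$. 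Hence Lemma \ref{l:usecauchyestimates1} with $N = 1 + \tN$ gives
\[
\|\partial_\lambda^j \chi R(\pm \lambda)\chi\| \le C\, j!\, (1+|\lambda|)^{N_2 + j(1+\tN)}.
\]
The cutoff derivatives satisfy $|\partial_\lambda^j \psi_l(\lambda)| \le C_j (1+|\lambda|)^{j(1+\tN)}$ on their supports, and by (\ref{eq:disjsupp}) at any $\lambda$ at most one $\psi_l$ is active. Leibniz's rule then yields a pointwise bound on the $k$-th derivative of the full integrand of the form $C\, k!\, (1+|\lambda|)^{N_2 + k(1+\tN) + C_A - 2m}$, where $C_A$ absorbs the fixed polynomial growth of $A(\lambda)$. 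Integrating by parts $k$ times produces the factor $(it)^{-k}$, with vanishing boundary contributions at $\pm\infty$ once $m$ is large enough that the integrand decays faster than any fixed negative power of $|\lambda|$. Choosing $m$ so that $N_2 + k(1+\tN) + C_A - 2m < -1$ -- which forces $m$ to grow linearly, hence polynomially, in $k$ -- makes the resulting $\lambda$-integral absolutely convergent with a bound uniform in $t$.

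The main obstacle is the bookkeeping around the concentrated cutoffs $\psi_l$: their derivatives blow up like $\nu_l^{j(1+\tN)}$ near the thresholds, and it is essential that the lower bound on $\mathrm{dist}(\lambda, \{\pm\nu_{l'}\})$ precisely balances the powers of $(1+|\lambda|)$ arising from the Cauchy-estimate factor $(1+|\lambda|)^{jN}$ in Lemma \ref{l:usecauchyestimates1}, and that both of these are in turn dominated by the regulariser $(1+|\lambda|)^{-2m}$. The disjoint-supports condition (\ref{eq:disjsupp}), guaranteed by $\tN \ge \NY - 1$, is what rules out any combinatorial loss from the infinite sum over $l$; without it, one would need a more delicate argument near clustering thresholds.
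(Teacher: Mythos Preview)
Your proposal is correct and follows essentially the same route as the paper: use the identity (\ref{eq:Qmidentity}) to insert $(\lambda^2+M_0)^{-m}$, invoke Lemma~\ref{l:usecauchyestimates1} (with $N=1+\tN\ge N_1$) for polynomial bounds on $\partial_\lambda^{k'}\chi R(\pm\lambda)\chi$ on the support of $1-\psi_{tot}$, and then integrate by parts $k$ times, choosing $m$ large enough (linearly in $k$) so that the resulting integrand is in $L^1(\Real)$. The paper compresses all of your Leibniz and distance-to-threshold bookkeeping into the single displayed estimate (\ref{eq:k'deriv}), but the content is the same.
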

\begin{proof} 
Set 
\begin{equation}\label{eq:psitot}
\psi_{tot}(\lambda)=  \psi(\lambda)+\sum_{l>0} \psi_l(\lambda) .
\end{equation}
Note that by our assumptions on $\psi$, $1-\psi_{tot}$  vanishes in a neighborhood of $\lambda=0$ 
and in a neighborhood of $\lambda=\pm \sigma_j$ for each $\sigma_j$.  Hence
$$( 1 -\psi_{tot}(\lambda))
 (\chi R(\lambda)(I-\Pre)\chi-\chi R(-\lambda)(I-\Pre)\chi)$$
is a smooth function of $\lambda$. 
 Using Lemma \ref{l:usecauchyestimates1}, by choosing $m\in \Natural$ 
sufficiently large we can ensure that
\begin{equation}\label{eq:k'deriv}
\left\| \frac{d^{k'}}{d\lambda^{k'}}\left( (\lambda^2+M_0)^{-m} ( 1 -\psi_{tot}(\lambda))
 \chi( R(\lambda)- R(-\lambda))(I-\Pre) \chi\right)\right\|_{\mch\rightarrow \mch}
\leq C (1+|\lambda|)^{-2}\end{equation}
for all $k'\in \Natural_0$,  $k'\leq k$.
The choice of exponent $-2$ on the right-hand side is somewhat arbitrary,
but is made to ensure that the function is integrable.  We could replace 
$-2$ by $-p$, some other $p>1$, and such a change may change the value of $m$ 
which is needed on the left hand side.
Now we use (\ref{eq:Qmidentity}) and 
integrate  by parts $k$ times to prove the lemma.
\end{proof}

By way of comparison, we include  following lemma.
\begin{lemma} \label{l:Irspcut}
Under the hypotheses of Proposition \ref{p:speccutoffexp}, 
if the support of $\psi$ is chosen sufficiently small, then for any $k\in \Natural$, there is a $C>0$ so that
$$\left \| \chi \psi_{sp}(H) I_r(t) \left( \begin{array}{c}
f_1\\ f_2 \end{array}\right) \right \|_{\mch \oplus \mch} \leq C t^{-k}( \| f_1\|_{\mch}
+\|f_2\|_{\mch})\; \text{for $t>0$}.$$
\end{lemma}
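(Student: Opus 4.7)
The plan is to imitate the proof of Lemma \ref{l:Ir}, but with the high-energy decay provided there by the $(H+M_0)^{-m}$ regularization replaced here by the spectral cutoff $\psi_{sp}(H)$, which collapses the $\lambda$-integration to a compact set. The first step is the functional calculus identity
\[
\psi_{sp}(H)[R(\lambda)-R(-\lambda)](I-\Pre) = \psi_{sp}(\lambda^2)[R(\lambda)-R(-\lambda)](I-\Pre),
\]
which holds as a distribution in $\lambda$ because $[R(\lambda)-R(-\lambda)](I-\Pre)$ is, up to a factor of $2\pi i$, the spectral density of the continuous part of $H$ at energy $\lambda^2$ (Stone's formula with the change of variable $\tau = \lambda^2$). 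This allows me to push $\psi_{sp}(H)$ under the integral sign defining $I_r(t)$, obtaining
\[
\psi_{sp}(H)I_r(t) = \frac{1}{2\pi i}\int e^{it\lambda}\psi_{sp}(\lambda^2)\Bigl(1-\psi(\lambda)-\sum_{l\geq 1}\psi_l(\lambda)\Bigr)A(\lambda)[R(\lambda)-R(-\lambda)](I-\Pre)\,d\lambda.
\]

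Next, since $\psi_{sp}(\lambda^2)$ has compact support in $\lambda$, the integrand is supported in a fixed compact set $K\subset\Real$, and only the finitely many $\psi_l$ with $\nu_l\in K$ contribute to the sum. On $K$ the scalar factor $\psi_{sp}(\lambda^2)\bigl(1-\psi(\lambda)-\sum_l \psi_l(\lambda)\bigr)$ vanishes identically in neighborhoods of $0$ and of each threshold $\pm\sigma_j$ lying in $K$. Choosing any $\chi_1\in C_c^\infty(\xinf)$ with $\chi_1 f_\ell = f_\ell$ for $\ell=1,2$, I may insert $\chi_1$ to the right of the resolvent without changing the action on $f_1,f_2$. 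By Lemma \ref{l:specmeas} and the meromorphic continuation of $\chi R(\lambda)\chi_1$ to $\zhat$, the operator-valued function
\[
\lambda\mapsto \chi\bigl(R(\lambda)-R(-\lambda)\bigr)(I-\Pre)\chi_1
\]
is smooth on $\Real$ away from the origin and the thresholds, so on the support of the scalar cutoff all of its $\lambda$-derivatives are bounded in operator norm, with bounds depending only on the supports of $\psi_{sp}$ and $\psi$ (and on $\chi$, $\chi_1$).

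The remainder is routine: the full integrand is a $C^\infty$, compactly supported, $B(\mchs,\mchs)$-valued function of $\lambda$, and integration by parts $k$ times produces the factor $t^{-k}$ times an integral bounded independent of $t$, yielding the claimed estimate in terms of $\|f_1\|_{\mch}+\|f_2\|_{\mch}$ alone. The only genuine obstacle—really the only nontrivial point—is the initial functional calculus identity that moves $\psi_{sp}(H)$ inside the spectral integral; once it is in place, the present argument is strictly simpler than that of Lemma \ref{l:Ir}, since neither the high-energy resolvent bound nor the spacing hypothesis on the $\nu_l$ is used anywhere.
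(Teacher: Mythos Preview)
Your proof is correct and follows essentially the same route as the paper's: both replace $\psi_{sp}(H)$ by $\psi_{sp}(\lambda^2)$ under the spectral integral, observe that the resulting integrand $\psi_{sp}(\lambda^2)(1-\psi_{tot}(\lambda))\chi(R(\lambda)-R(-\lambda))(I-\Pre)\chi$ is smooth and compactly supported in $\lambda$ (smooth because the cutoff kills neighborhoods of all thresholds in its support), and then integrate by parts $k$ times. You are simply more explicit than the paper about justifying the functional calculus identity and about inserting the cutoff on the right; the paper's proof is the same argument in two lines.
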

\begin{proof}
We use $\psi_{tot} $ from (\ref{eq:psitot}).  Using the compact
support of $\psi_{sp}$ there is a $C>0$ so that
$$
\left\| \frac{d^{k'}}{d\lambda^{k'}}\left( \psi_{sp}(\lambda^2) ( 1 -\psi_{tot}(\lambda))
 \chi( R(\lambda)- R(-\lambda))(I-\Pre) \chi\right)\right\|_{\mch \mapsto\mch} 
\leq C (1+|\lambda|)^{-2}
$$
for all $k'\in \Natural_0$,  $k'\leq k$.
Then integrating by parts $k$ times proves the lemma.
\end{proof}

\begin{lemma}\label{l:lcont}  Let $H$, $f_1$, and $f_2$ satisfy 
the hypotheses of Theorem \ref{thm:waveexp2}.
Let $\psi_l$ be as defined in (\ref{eq:psil}) 
for $\nu_l^2\in \spec(H_Y)\setminus\{0\}$ and let $k_0\in\Natural$. Then with the support of
the function $\psi$ in (\ref{eq:psil}) chosen sufficiently small,  there are 
$b_{l,k,\pm},\;b^{(')}_{l,k,\pm}\in \langle r \rangle^{1/2+2k+\epsilon}\mch$, $k=0,1,...,k_0-1$  so that
\begin{multline} \label{eq:psilexp} \int_{-\infty}^\infty e^{i t \lambda}
\psi_l(\lambda) A(\lambda)
\chi(R(\lambda)-R(-\lambda))(I-\Pre)\left( \begin{array}{c} f_1\\ f_2
\end{array}\right) d\lambda \\
=\sum_{k=0}^{k_0-1}t^{-k-1/2} \left( \begin{array}{c} \chi b_{l,k,+}e^{it\nu_l}+
 \chi b_{l,k,-}e^{-it\nu_l} \\\chi b_{l,k,+}^{(')}e^{it\nu_l}+
 \chi b_{l,k,-}^{(')}e^{-it\nu_l}
\end{array}\right) + \chi B_{l,k_0}(t).
\end{multline}
There is an $m\in \Natural $ depending polynomially
 on $k_0$ as well as on $N_1,N_2,\NY$ and 
a constant $C$ independent of $l$ so that for $t>0$
\begin{equation}\label{eq:remeststph}\| \chi B_{l,k_0}(t)\|_{\mch \oplus \mch}\leq C l^{-2}t^{-k_0} (\|(H+M_0)^m f_1\|_\mch+\|(H+M_0)^m f_2\|_\mch)
\end{equation}
and
\begin{equation}\label{eq:best}
\| \chi b_{l,k,\pm} \|_\mch + \|\chi  b_{l,k,\pm}^{(')} \|_\mch\leq C  l^{-2}(\|(H+M_0)^m f_1\|_\mch+\|(H+M_0)^m f_2\|_\mch)
,\; k=0,...,k_0-1.
\end{equation}
The $b_{l,k,\pm}$, $b^{(')}_{l,k,\pm}$
 are determined by the initial data $f_1$, $f_2$, the value of $\nu_l$,  and the 
derivatives with respect to $\tau_j$ 
 of order at most $2k$ of elements of the set 
$\{ \gef_{j'}\}_{0\leq \sigma_{j'}\leq \nu_l}$ evaluated at $\pm \nu_l$,
where $\sigma_j=\nu_l$.
\end{lemma}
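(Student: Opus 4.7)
The plan is to split the integral on the left-hand side of (\ref{eq:psilexp}) according to the two components of $\supp\psi_l$, one around $\nu_l$ and one around $-\nu_l$, and on each piece pass to $\tau=\tau_j(\lambda)$ (with $\sigma_j=\nu_l$) as a local coordinate on $\zhat$. Since $\psi$ is even and $\psi_l(\lambda)=\psi(\nu_l^{\tN}(\lambda^2-\nu_l^2))$, this change of variable is well-defined inside $\supp\psi_l$ by the discussion preceding Lemma \ref{l:usecauchyestimates2}. On the piece near $\nu_l$ one has $\lambda=\sqrt{\tau^2+\nu_l^2}$, $d\lambda=(\tau/\lambda)\,d\tau$, and $\psi_l=\psi(\nu_l^{\tN}\tau^2)$; the piece near $-\nu_l$ is analogous and will yield the conjugate phase $e^{-it\nu_l}$.

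Using Lemma \ref{l:specmeas} to expand the spectral density
\[
\chi(R(\lambda)-R(-\lambda))(I-\Pre)\chi = \frac{i}{2}\sum_{0\le \sigma_{j'}\le |\lambda|}\frac{1}{\tau_{j'}(\lambda)}\chi\Phi_{j'}(\lambda)\otimes\chi\Phi_{j'}(\lambda),
\]
I separate the $\sigma_{j'}=\nu_l$ contributions, for which Corollary \ref{c:threshold} gives a simple $1/\tau$ pole with smooth numerator, from the $\sigma_{j'}<\nu_l$ contributions, for which the continuity statements in Section \ref{ss:rgef} show that $1/\tau_{j'}(\lambda)$ and $\Phi_{j'}(\lambda)$ depend smoothly on $\tau$ throughout $\supp\psi_l$. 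The singular $1/\tau$ from the threshold terms is exactly cancelled by the Jacobian factor $\tau$, so the full integrand in $\tau$ is smooth near $\tau=0$. Simultaneously, I apply identity (\ref{eq:Qmidentity}) to replace $f_i$ by $(H+M_0)^{m}f_i$ at the cost of a factor $(\lambda^2+M_0)^{-m}\sim\nu_l^{-2m}$, providing the decay in $l$ needed for the final estimates.

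After these reductions each half-integral takes the form
\[
\int e^{it\sqrt{\tau^2+\nu_l^2}}\,F_{l,\pm}(\tau)\,\psi(\nu_l^{\tN}\tau^2)\,d\tau,
\]
with $F_{l,\pm}$ a smooth $\mchs$-valued function of $\tau$ near $0$ assembled from the rows of $A(\pm\sqrt{\tau^2+\nu_l^2})$, the generalized eigenfunctions, and the regular part of the resolvent. Writing $\sqrt{\tau^2+\nu_l^2}=\nu_l+\tau^2/(2\nu_l)+O(\tau^4/\nu_l^3)$ and invoking the standard nondegenerate stationary phase expansion at the critical point $\tau=0$ (in the form to be established in Lemma \ref{l:allinonestatphase}), I obtain $e^{\pm it\nu_l}$ times a series $\sum_{k=0}^{k_0-1}c_{l,k,\pm}\,t^{-1/2-k}$ with remainder $O(t^{-1/2-k_0})$. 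Each $c_{l,k,\pm}$ is a polynomial in $1/\nu_l$ applied to $\tau$-derivatives of $F_{l,\pm}$ at $0$ of order at most $2k$, and those derivatives are built from $\tau_j$-derivatives of the $\Phi_{j'}$ with $\sigma_{j'}\le\nu_l$ evaluated at $\pm\nu_l$, as claimed; the first and second rows of $A$ give rise to $b_{l,k,\pm}$ and $b^{(')}_{l,k,\pm}$ respectively.

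The main obstacle is the uniform-in-$l$ estimate yielding the factor $l^{-2}$ in (\ref{eq:best}) and (\ref{eq:remeststph}). Two competing effects must be balanced: the support of $\psi_l$ shrinks like $\nu_l^{-\tN/2}$, while the $\tau$-derivatives of $\chi R(\lambda)\chi$ on this support grow like $\nu_l^{N_2+kN_M}$ by Lemma \ref{l:usecauchyestimates2}, so the naive stationary phase bounds degrade in $l$. Taking $m$ polynomially large in $k_0,N_1,N_2,\NY$, the factor $\nu_l^{-2m}$ from (\ref{eq:Qmidentity}) dominates the growth of both the stationary phase coefficients and the stationary phase remainder; combined with Weyl's law, which gives $\nu_l$ growing polynomially in $l$, this produces the desired $Cl^{-2}$ bounds uniformly in $l$ and $k\le k_0-1$.
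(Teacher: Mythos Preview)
Your overall strategy—localize near $\pm\nu_l$, pass to the coordinate $\tau=\tau_j(\lambda)$, apply stationary phase, and absorb the polynomial growth of the resolvent derivatives into the factor $(\lambda^2+M_0)^{-m}$ from (\ref{eq:Qmidentity})—matches the paper. But there is a genuine gap at the stationary phase step.

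You invoke Lemma \ref{l:specmeas} at the outset and then assert that ``the full integrand in $\tau$ is smooth near $\tau=0$.'' This is not correct. The spectral density formula has the sum $\sum_{0\le\sigma_{j'}\le|\lambda|}$, whose index set jumps at $\lambda=\nu_l$: the terms with $\sigma_{j'}=\nu_l$ are present only for $\lambda>\nu_l$ and absent for $\lambda<\nu_l$. After the Jacobian cancels the $1/\tau$ in those terms, what remains is $\lambda^{-1}\sum_{\sigma_{j'}=\nu_l}\chi\Phi_{j'}\otimes\chi\Phi_{j'}$, which is generically nonzero at $\lambda=\nu_l$. So the integrand, written via the spectral density, has a jump discontinuity at $\tau=0$ between the two legs of the $L$-shaped contour. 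Lemma \ref{l:allinonestatphase} requires $F\in C_c^\infty(B_0)$ on a complex disk and does not apply to such a piecewise function. If you instead apply the half-line expansion of Lemma \ref{l:bdrstatphase} to each leg separately, you get all powers $t^{-1/2-k/2}$, and you would then need to prove that the integer-power terms cancel between the two legs—a nontrivial point you do not address.

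The paper sidesteps this by \emph{not} using the spectral density at the stationary phase step. The key move is the rewriting (\ref{eq:rewrite1}): change $\lambda\to-\lambda$ in the $R(-\lambda)$ term, so that the integrand involves only $\chi R(\lambda)(I-\Pre)\chi$, which \emph{is} meromorphic in $\tau_j$ with a simple pole at $\tau_j=0$ (Corollary \ref{c:threshold}). Then $\tau_j\cdot\psi_l\cdot A(\pm\lambda)\cdot\chi R(\lambda)(I-\Pre)\chi$ genuinely extends to a smooth function on a complex disk in $\tau_j$, and Lemma \ref{l:allinonestatphase} applies directly, yielding the expansion in powers $t^{-1/2-k}$ together with the rapid decay of the ``wrong-sign'' phase. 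Only \emph{afterwards}, once the expansion form is established, does the paper reintroduce the spectral density and invoke Lemma \ref{l:bdrstatphase} solely to identify the coefficients $b_{l,k,\pm}$ in terms of the $\Phi_{j'}$; the integer-power cancellation is then already known. Your uniform-in-$l$ argument via Lemma \ref{l:usecauchyestimates2} and (\ref{eq:Qmidentity}) is fine and is exactly what the paper does.
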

Before proving the lemma, we note that as in (\ref{eq:k'deriv}) the choice of exponent
$-2$ for $l$ in (\ref{eq:remeststph}) and (\ref{eq:best}) is 
 somewhat
arbitrary.  We choose it to ensure the sum over $l$  converges.
As in  (\ref{eq:k'deriv}),  $l^{-2}$ may be replaced by $l^{-p}$, $p>1$,
if $m$ is chosen sufficiently large, depending on $p$.
\begin{proof}
Let $j\in \Natural$ be such that $\nu_l=\sigma_j$.  We apply Lemma \ref{l:allinonestatphase}.  

By our choice of $\psi_l$, the support of $\psi_l$ contains no thresholds other than $\pm \nu_l$.
Then with $\sigma_j=\nu_l$,
 $\tau_j (\lambda) \chi R(\lambda)(I-\Pre) \chi$ is a 
smooth 
function of $\tau_j(\lambda)$ on the
support of $\psi_l(\lambda)$, $\lambda \in \Real$, but  
$\chi \tau_j(\lambda) R(-\lambda)(I-\Pre)\chi$ is not in general. 
Hence we shall rewrite the integral to avoid the use of $R(-\lambda)$.
 At the 
same time, for $m\in \Natural_0$ we write $f_1=(H+M_0)^{-m}(H+M_0)^mf_1$,
and similarly for $f_2$.   Thus we 
rewrite
\begin{multline}\label{eq:rewrite1}
 \int_{-\infty}^\infty e^{i t \lambda}
\psi_l(\lambda) A(\lambda)
\chi(R(\lambda)-R(-\lambda))(I-\Pre)(\lambda^2+M_0)^{-m}(H+M_0)^m\left( \begin{array}{c} f_1\\ f_2
\end{array}\right) d\lambda\\
= \int_{-\infty}^\infty (e^{i t \lambda} A(\lambda)-e^{-it\lambda} A(-\lambda))\psi_l(\lambda)
\chi R(\lambda)(I-\Pre)(\lambda^2+M_0)^{-m}(H+M_0)^m\left( \begin{array}{c} f_1\\ f_2
\end{array}\right) d\lambda.
\end{multline}
This integral has an asymptotic expansion, with 
contributions arising from the neighborhoods of $\lambda=\sigma_j=\nu_l$
and $\lambda = -\sigma_j=-\nu_l$; each will contribute both 
to the $b's$ and to $B_{l,k_0}$.

Now we recall from the proof of Lemma \ref{l:usecauchyestimates2} that
there are neighborhoods of 
 $ \nu_l=\sigma_j$ and $- \nu_l=-\sigma_j$ in $\zhat$
on which we may use $\tau_j$ as 
a coordinate and on which $\chi \tau_j(\lambda) R(\lambda)(I-\Pre)\chi$ is a smooth function of $\tau_j$. Under the hypotheses of Theorem \ref{thm:waveexp2}, the
radii of the balls about $\pm \nu_l$ can be taken proportional to 
$\min ( \sigma_j^{(1-\NY)/2}, \sigma_j^{-N_1})=\min(\nu_l^{(1-\NY)/2},\nu_l^{-N_1})$ in the $\tau_j$ coordinate.
Hence by our choice
of $\tN$ we can choose 
our original function $\psi$, with 
$\psi_l(\lambda)=\psi(\nu_l^{\tN}(\lambda^2-\nu_l^2))$, so
that we can extend 
$\psi_l(\lambda) A(\pm \lambda)\chi\tau_j(\lambda) R(\lambda)(I-\Pre)\chi$ to be a smooth, compactly supported function 
of $\tau_j$ in this complex ball.  In fact, with 
$\psi$ chosen to be even, $\psi( \nu_l^{\tN}|\lambda^2-\nu_l^2|)A(\pm \lambda)\chi\tau_j(\lambda) R(\lambda)(I-\Pre)\chi$ 
provides such an extension (where we understand
$\lambda$ to be the locally well-defined function of $\tau_j$).  With 
the support of $\psi$ chosen sufficiently small, this will hold for all 
$l\in \Natural$.

 Thus we may apply 
Lemma \ref{l:allinonestatphase} in order to find an expansion
for the portion of the integral in (\ref{eq:rewrite1})  over $(0,\infty)$.  From the second 
part of Lemma \ref{l:allinonestatphase} we obtain immediately
that 
\begin{multline}\label{eq:oneonly}
\int_{0}^\infty (e^{i t \lambda} A(\lambda) -e^{-it\lambda} A(-\lambda))\psi_l(\lambda)
\chi R(\lambda)(I-\Pre)(\lambda^2+M_0)^{-m}(H+M_0)^m\left( \begin{array}{c} f_1\\ f_2
\end{array}\right) d\lambda
\\
= -\int_{0}^\infty e^{-it\lambda} A(-\lambda)
\psi_l(\lambda)
\chi R(\lambda)(I-\Pre)(\lambda^2+M_0)^{-m}(H+M_0)^m\left( \begin{array}{c} f_1\\ f_2
\end{array}\right) d\lambda +O(t^{-k_0}).
\end{multline}
Now by applying the first part of 
 Lemma 
\ref{l:allinonestatphase}  and (\ref{eq:statphaseexpl}), we 
have an expansion of (\ref{eq:oneonly}) of the form in (\ref{eq:psilexp})
with 
exponential $e^{-it\nu_l}$, and the coefficients in the expansion; that
is, the  $b_{l,k.-}$ and $b_{l,k,-}^{(')}$; 
are determined by  $\sigma_j=\nu_l$ and derivatives with respect to $\tau_j$ of 
\begin{equation}
\label{eq:detderivs}
\psi_l(\lambda) A(- \lambda) \tau_j(\lambda)
\chi R(\lambda)(I-\Pre)(\lambda^2+M_0)^{-m}\left( \begin{array}{c} (H+M_0)^m f_1\\(H+M_0)^m f_2
\end{array}\right)
\end{equation}
of order at most $2k$ evaluated at $\lambda =\sigma_j=\nu_l$. 


Next we turn to the question of uniformity in $l$, as in (\ref{eq:remeststph}) and (\ref{eq:best}).  This 
is immediate if $H_Y$ is bounded, so we consider only the case of unbounded 
$H_Y$.  Note that in this case our assumption (\ref{eq:spacing}) ensures that 
$\nu_l\geq (C_Yl)^{1/(\NY+1)} +O(1)$.  
 By   Lemma \ref{l:usecauchyestimates2} 
the derivatives 
of fixed order of $\chi R (\lambda)(I-\Pre) \chi$ with respect to $\tau_j$  near $\pm \sigma_j$
grow at worst polynomially in $\sigma_j=\nu_l$, as do the derivatives of 
$\psi_l(\lambda )$ by definition.  Thus 
by taking $m$ sufficiently large, 
depending polynomially on $k_0$, we can guarantee that the analog
of (\ref{eq:best}) holds.  Similarly, using the remainder estimate of
Lemma \ref{l:allinonestatphase}, we find that the analog of
(\ref{eq:remeststph}) holds.

 The integral in (\ref{eq:rewrite1}) over $(-\infty,0)$ 
can be handled in an analogous manner, and gives us the $b_{l,k,+}$ and 
$b^{(')}_{l,k,+}$.

Thus far we have proved that the coefficients $b_{l,k,-}$ (and $b_{l,k,-}^{(')}$)
are determined by 
the value of 
$\nu_l=\sigma_j$ and the derivatives with respect to $\tau_j$ of 
(\ref{eq:detderivs}) evaluated at $\lambda=\nu_l=\sigma_j$.  We
can say a bit more.  Here we concentrate on describing the origin of the 
$b_{l,k,-}$ and do not worry about bounding them uniformly in $l$, as we
have already done so.  Rather than using (\ref{eq:oneonly})
we return to the original expression
\begin{align*}& \int_{-\infty}^\infty e^{i t \lambda}
\psi_l(\lambda) A(\lambda)
\chi(R(\lambda)-R(-\lambda))(I-\Pre)\left( \begin{array}{c} f_1\\ f_2
\end{array}\right) d\lambda \\ & =
\int_{-\infty}^0 e^{i t \lambda}
\psi_l(\lambda) A(\lambda)
\chi(R(\lambda)-R(-\lambda))(I-\Pre)(\lambda^2+M_0)^{-m}(H+M_0)^m\left( \begin{array}{c} f_1\\ f_2
\end{array}\right) d\lambda \\& \hspace{2mm}+
\int_{0}^\infty e^{i t \lambda}
\psi_l(\lambda) A(\lambda)
\chi(R(\lambda)-R(-\lambda))(I-\Pre)(\lambda^2+M_0)^{-m}(H+M_0)^m\left( \begin{array}{c} f_1\\ f_2
\end{array}\right) d\lambda.
\end{align*}
We concentrate on the integral over 
$(-\infty,0)$, which gives us the $b_{l,k,-}$ and $b_{l,k,-}^{(')}$ (the
integral over $(0,\infty)$ leads to the $b_{l,k,+}$).
Using Lemma \ref{l:specmeas},
\begin{align} & \nonumber
 \int^{0}_{-\infty} e^{it\lambda} A(\lambda)
\psi_l(\lambda)
\chi [ R(\lambda)-R(-\lambda)](I-\Pre)(\lambda^2+M_0)^{-m}(H+M_0)^m\left( \begin{array}{c} f_1\\ f_2
\end{array}\right) d\lambda \\ & \nonumber
= \frac{-i}{2}\int_{0}^{\sigma_j} e^{-it\lambda} A(-\lambda)
\psi_l(\lambda)
\chi \sum_{0\leq \sigma_{j'}< \sigma_j}\frac{\gef_{j'}(\lambda)\otimes \gef_{j'}(\lambda)}{\tau_{j'}(\lambda)}(\lambda^2+M_0)^{-m}(H+M_0)^m\left( \begin{array}{c} f_1\\ f_2
\end{array}\right) d\lambda \\ &   \hspace{3mm}
-\frac{i}{2}\int_{\sigma_j} ^\infty e^{-it\lambda} A(-\lambda)
\psi_l(\lambda)
\chi \sum_{0\leq \sigma_{j'}\leq \sigma_j}
\frac{\gef_{j'}(\lambda)\otimes \gef_{j'}(\lambda)}{\tau_{j'}(\lambda)}(\lambda^2+M_0)^{-m}(H+M_0)^m\left( \begin{array}{c} f_1\\ f_2
\end{array}\right) d\lambda .
\end{align}
We can apply Lemma \ref{l:bdrstatphase} to each of these two integrals. 
We know from our previous discussion that the sum will have an asymptotic
expansion in powers of $t^{-k-1/2}$, but we learn some more specific
information about the coefficients this way.  This
shows us that the $b_{l,k,-}$ and $b^{(')}_{l,k,-}$ are actually determined by 
the value $\nu_l=\sigma_j$, the initial conditions
$f_1$ and $f_2$, and elements of the set
 $$\left\{
(\partial_{\tau_j}^{k'} \gef_{j'} )\restrict_{\lambda=\sigma_j}\mid 
0\leq \sigma_{j'}\leq \sigma_j,\; 0\leq k'\leq 2k\right\}.$$
This also provides a natural way to see that
$b_{l,k-}\in \langle r \rangle^{1/2+2k+\epsilon}\mch$, since
$\partial_{\tau_j}^{k'}\gef_{j'}{\restrict_{ \lambda=\sigma_j}}
\in \langle r \rangle^{1/2+k'+\epsilon}\mch$ if $\sigma_{j'}\leq \sigma_j$.
\end{proof}

The next lemma is almost parallel to Lemma \ref{l:lcont}.  It differs in that it 
assumes only the hypotheses of Proposition \ref{p:speccutoffexp}, and only achieves uniformity in 
$l$ because of the multiplication by the 
compactly supported function  $\psi_{sp}(\lambda^2)$.  We omit the proof because 
it is so similar.  
\begin{lemma}\label{l:lcontprop} Let $H$, $f_1$, and $f_2$ satisfy 
the hypotheses of Proposition \ref{p:speccutoffexp}.  
Let $\psi_l$ be as defined in (\ref{eq:psil})
 and let $k_0\in\Natural$. Then with the support of
the function $\psi$ in (\ref{eq:psil}) chosen sufficiently small, for 
 $l\in \Natural$ there are 
$b_{l,k,\pm},\;b^{(')}_{l,k,\pm}\in \langle r \rangle^{1/2+2k+\epsilon}\mch$, $k=0,1,...,k_0-1$  so that
\begin{multline}  \int_{-\infty}^\infty e^{i t \lambda}
\psi_l(\lambda)\psi_{sp}(\lambda^2) A(\lambda)
\chi(R(\lambda)-R(-\lambda))(I-\Pre)\left( \begin{array}{c} f_1\\ f_2
\end{array}\right) d\lambda \\
=\sum_{k=0}^{k_0-1}t^{-k-1/2} \left( \begin{array}{c} \chi b_{l,k,+}e^{it\nu_l}+
 \chi b_{l,k,-}e^{-it\nu_l} \\\chi b_{l,k,+}^{(')}e^{it\nu_l}+
 \chi b_{l,k,-}^{(')}e^{-it\nu_l}
\end{array}\right) + \chi B_{l,k_0}(t)
\end{multline}
with $$\|\chi B_{l,k_0}(t) \|_{\mch} \leq C_{l,k}t^{-k_0} (\|f_1\|_{\mch}+\|f_2\|_{\mch}).$$
The $b_{l,k,\pm}$, $b^{(')}_{l,k,\pm}$
 are determined by the initial data $f_1$, $f_2$, the value of $\nu_l$,  and the 
derivatives with respect to $\tau_j$ 
 of order at most $2k$ of elements of the set 
$\{ \gef_{j'}(\lambda),\; \psi_{sp}(\lambda^2)\gef_{j'}(\lambda)\}_{0\leq \sigma_{j'}\leq \nu_l}$ evaluated at $\pm \nu_l$,
where $\sigma_j=\nu_l$.
\end{lemma}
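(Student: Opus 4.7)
The plan is to adapt the argument of Lemma \ref{l:lcont} essentially verbatim, with two simplifications and one new bookkeeping step. The simplifications come from the compact support of $\psi_{sp}$: we do not need the high energy cut-off resolvent bound, we do not need the spacing hypothesis on the $\nu_l$, and we do not need to insert the regularizing factor $(\lambda^2+M_0)^{-m}(H+M_0)^m$ that appeared in \eqref{eq:rewrite1}. The new bookkeeping step is to track how $\psi_{sp}(\lambda^2)$ enters the stationary phase coefficients, which accounts for why $\psi_{sp}(\lambda^2)\gef_{j'}(\lambda)$ appears alongside $\gef_{j'}(\lambda)$ in the description of the $b_{l,k,\pm}$.

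First, let $j=j(l)$ satisfy $\nu_l=\sigma_j$. Exactly as in the proof of Lemma \ref{l:lcont}, I would rewrite
\[
\int_{-\infty}^\infty e^{it\lambda}\psi_l(\lambda)\psi_{sp}(\lambda^2)A(\lambda)\chi(R(\lambda)-R(-\lambda))(I-\Pre)\begin{pmatrix}f_1\\ f_2\end{pmatrix}d\lambda
\]
by applying $\lambda\mapsto -\lambda$ to the $R(-\lambda)$ term (using that both $\psi_l$ and $\psi_{sp}(\lambda^2)$ are even) to arrive at
\[
\int_{-\infty}^\infty\bigl(e^{it\lambda}A(\lambda)-e^{-it\lambda}A(-\lambda)\bigr)\psi_l(\lambda)\psi_{sp}(\lambda^2)\chi R(\lambda)(I-\Pre)\begin{pmatrix}f_1\\ f_2\end{pmatrix}d\lambda.
\]
This substitution is used so that only $R(\lambda)$, and not $R(-\lambda)$, appears near $+\nu_l$, where we will use $\tau_j$ as a local coordinate. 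An analogous treatment handles the contribution near $-\nu_l$.

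Next I would set up the stationary phase step. By Corollary \ref{c:threshold} the function $\tau_j(\lambda)\chi R(\lambda)(I-\Pre)\chi$ extends smoothly to a complex ball around $\pm\nu_l$ in the $\tau_j$ coordinate; since $\psi_{sp}\in C_c^\infty(\Real)$, the product $\psi_{sp}(\lambda^2)$ is a smooth function of $\tau_j$ in this same ball (because $\lambda^2=\tau_j^2+\sigma_j^2$). With the support of the base function $\psi$ in \eqref{eq:psil} chosen small enough that $\psi_l$ contains no thresholds other than $\pm\nu_l$, the entire integrand restricts to a smooth compactly supported function of $\tau_j$ about each of the two critical points. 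I would then invoke Lemma \ref{l:allinonestatphase} separately near $+\nu_l$ and $-\nu_l$: the second part of that lemma kills the non-resonant phase contributions (the analog of \eqref{eq:oneonly}), and the first part delivers an asymptotic expansion to order $t^{-k_0}$ with half-integer powers of $t$, producing the displayed expression for the integral and giving the desired remainder bound $\|\chi B_{l,k_0}(t)\|_{\mch}\le C_{l,k}t^{-k}(\|f_1\|_\mch+\|f_2\|_\mch)$. The constant depends on $l$, but this is harmless since $\psi_{sp}$ compactly supported ensures that only finitely many $l$ contribute in the application of Proposition \ref{p:speccutoffexp}.

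Finally, to identify the coefficients $b_{l,k,\pm}, b^{(')}_{l,k,\pm}$, I would follow the closing paragraphs of the proof of Lemma \ref{l:lcont}, rewriting $R(\lambda)$ via the identity $R(\lambda)=R(-\lambda)+(R(\lambda)-R(-\lambda))$, invoking the spectral measure expansion of Lemma \ref{l:specmeas} for $\chi(R(\lambda)-R(-\lambda))(I-\Pre)\chi$, and applying Lemma \ref{l:bdrstatphase}. This identifies the coefficients as being built from the initial data $f_1,f_2$, the value $\nu_l$, and derivatives with respect to $\tau_j$ of order at most $2k$ of elements of $\{\gef_{j'}(\lambda),\,\psi_{sp}(\lambda^2)\gef_{j'}(\lambda)\}_{0\le\sigma_{j'}\le \nu_l}$ evaluated at $\pm\nu_l$; the second type of element appears precisely because a $\psi_{sp}(\lambda^2)$ factor multiplies each $\gef_{j'}\otimes\gef_{j'}/\tau_{j'}$ term coming from the spectral measure, and Leibniz's rule distributes its $\tau_j$-derivatives onto the generalized eigenfunction as well. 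The only mildly delicate point, and the closest thing to an obstacle, is justifying that $\psi_l(\lambda)\psi_{sp}(\lambda^2)$ together with the $\tau_j$-smooth extension of the resolvent really produces a compactly supported smooth function of $\tau_j$ for all $l\in\Natural$; this is handled uniformly by choosing $\psi$ once, with support small enough, and relying on $\psi_{sp}$'s smoothness in $\lambda^2$ to guarantee smoothness in $\tau_j$.
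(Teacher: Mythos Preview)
Your proposal is correct and follows exactly the approach the paper intends: the paper itself omits the proof of this lemma, stating only that it is ``almost parallel to Lemma \ref{l:lcont}'' and differs in assuming only the hypotheses of Proposition \ref{p:speccutoffexp}, with uniformity in $l$ coming from the compact support of $\psi_{sp}$. You have correctly identified the needed modifications (dropping the $(H+M_0)^m$ regularization, dropping the spacing hypothesis, and tracking the extra $\psi_{sp}(\lambda^2)$ factor through the stationary phase and spectral-measure expansions), and your outline fills in the details the paper leaves implicit.
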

Of course, the $b_{l,k,\pm}$, $b^{(')}_{l,k,\pm}$ in Lemma 
\ref{l:lcontprop} are $0$ if $\pm \nu_l$ are 
 not in the support of $\psi_{sp}(\lambda^2)$.

\vspace{2mm}
\noindent
{\em Proof of Theorem \ref{thm:waveexp2}.}
We write $u(t)=\Pre u(t)+ (I-\Pre)u(t)$.  The expansion of $u_e(t)=\Pre u(t)$ is
given in Lemma \ref{l:ueigenvalues}.
From equation (\ref{eq:seconddecomp}) we see that $(I-\Pre)u(t)$ is
given by a sum of contributions from $I_0$, $I_{thr}$, and $I_r$.
By Lemma \ref{l:Ir}, the contribution of $I_r$ is of order $t^{-k}$ for any
$k$.  
Note that using Lemma \ref{l:lcont} and summing over $l\in \Natural$ 
evaluates the contribution of $I_{thr}$;  the estimates
(\ref{eq:remeststph}) and (\ref{eq:best}) ensure the convergence
of the sums over $l$ to bound  the remainder 
and the absolute convergence of the sum over $l$
 in (\ref{eq:u_{thr,k}}), respectively.  Lemma \ref{l:I0} gives the 
contribution of $I_0$.  Summing the contributions of the terms 
from $I_0$ and $I_{thr}$ gives $u_{thr,k_0}$. 
\qed

\vspace{2mm}
\noindent
{\em Proof of Proposition \ref{p:speccutoffexp}.}
We use (\ref{eq:seconddecomp}), multiplying on the left hand side by
$\chi \psi_{sp}(H)=\chi \psi_{sp}(H)\psiso(H)$. By Lemma \ref{l:Irspcut}, 
$\| \chi \psi_{sp}(H) I_r(t) \chi\|_{\mch\oplus \mch \rightarrow 
\mch\oplus \mch} \leq Ct^{-k}$ for any $k$.
 Then by Lemma \ref{l:I0} and
 Lemma \ref{l:lcontprop}, summing over the finite number of $l$ with 
$\nu_l^2\in \supp \psi_{sp}$, we see that the sum of the contributions 
of $\chi \psi_{sp}(H)I_0(t)$ and $\chi \psi_{sp}(H)I_{thr}(t)$ gives an 
expansion as claimed.
\qed

\appendix
\section{Asymptotic expansions of some integrals}
In  this section we prove two lemmas  which 
are used in evaluating the contribution of the thresholds to the 
asymptotics of the solutions of the wave equation.  The 
proofs of these lemmas use a change of variables and stationary
phase to find asymptotic expansions of two types of integrals.

\begin{lemma}\label{l:allinonestatphase}  Fix 
$c_0\in (0,1)$ and set $B_0= \{ z\in \Complex\mid |z|<c_0/2\}$.
Let $\mcx$ be a Banach
space.  For each  $k_0\in \Natural$
there is a $C>0$ so that if $\sigma_j>c_0$ and $F\in C_c^\infty(B_0;\mcx)$
then there are $b_{k}=b_{k}(F,\sigma_j)\in \mcx$ so that for $t>0$
\begin{multline}\label{eq:statphaseexpl}\left\| \int _0^\infty e^{- i\lambda t}\frac{ F(\tau_j(\lambda))}{\tau_j(\lambda)}
d\lambda - (\sigma_j t)^{-1/2} e^{- i\sigma_j t} \sum_{k=0}^{k_0-1} b_{k }(t\sigma_j)^{-k}\right\|_{\mcx}
\\
\leq  C \left[(\sigma_j t)^{-k_0} \sum_{k  \leq 2k_0+1}  \sup_\tau \left\| \sigma_j^k  F^{(k)(\tau) } \right\|_{\mcx}+ 
t^{-k_0} (1+\sigma_j)^{k_0} \sum_{k\leq 2k_0+1}\sup _\tau \|  F^{(k)}(\tau)\|_{\mcx} \right].
\end{multline}
Here  $b_0 =e^{- i\pi/4}F(0)\sqrt{2\pi}$, and  the coefficients
$b_{k}$ are determined by the derivatives with respect to $\tau$ 
of $F(\sigma_j \tau) /\sqrt{\tau^2+1}$ of order at most $2k$,
evaluated at $\tau=0$. 
Moreover, under the same assumptions, for $t>0$
\begin{multline}
\label{eq:statphaseexp2}\left\| \int _0^\infty e^{ i\lambda t}\frac{ F(\tau_j(\lambda))}{\tau_j(\lambda)}
d\lambda \right\|_\mcx
\\
\leq  C \left[(\sigma_j t)^{-k_0} \sum_{k  \leq 2k_0+1}  \sup_{\tau}\left\| \sigma_j^k  F^{(k)} (\tau)\right\|_{\mcx}+ t^{-k_0} (1+\sigma_j)^{k_0} \sum_{k\leq 2k_0+1}\sup _\tau
\left\|   F^{(k)}(\tau)\right\|_{\mcx} \right].
\end{multline}
\end{lemma}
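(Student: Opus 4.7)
My plan is to split the integration interval at the threshold $\lambda = \sigma_j$ and, in each piece, substitute the new variable $\tau = \tau_j(\lambda)$ (real on $[\sigma_j,\infty)$, purely imaginary on $[0,\sigma_j]$), producing two integrals whose phase has a nondegenerate critical point at the common endpoint $\tau = 0$. Setting $\tau = \sqrt{\lambda^2-\sigma_j^2}$ on $[\sigma_j,\infty)$ and $s = \sqrt{\sigma_j^2-\lambda^2}$ on $[0,\sigma_j]$ (so that $\tau_j(\lambda)= is$) yields
\[
\int_0^\infty e^{\mp i\lambda t}\frac{F(\tau_j(\lambda))}{\tau_j(\lambda)}\,d\lambda \;=\; I_+ + I_-,
\]
with
\[
I_+ = \int_0^\infty e^{\mp it\sqrt{\tau^2+\sigma_j^2}}\frac{F(\tau)}{\sqrt{\tau^2+\sigma_j^2}}\,d\tau, \qquad I_- = -i\int_0^{\sigma_j} e^{\mp it\sqrt{\sigma_j^2-s^2}}\frac{F(is)}{\sqrt{\sigma_j^2-s^2}}\,ds.
\]
Because $\supp F\subset\{|\tau|<c_0/2\}$ and $\sigma_j>c_0$, both integrals are concentrated near the endpoint, and the respective phases have second derivatives $\pm 1/\sigma_j$ there.

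Next I would introduce the Fresnel variable $u$ by $u^2 = t(\sqrt{\tau^2+\sigma_j^2}-\sigma_j)$ in $I_+$ and $u^2 = t(\sigma_j-\sqrt{\sigma_j^2-s^2})$ in $I_-$. Explicit Jacobian computations reduce each to the common form $\sqrt{2/(\sigma_j t)}\,e^{\mp i\sigma_j t}\int e^{\mp iu^2}A(u)\,du$ for $I_+$ and $-i\sqrt{2/(\sigma_j t)}\,e^{\mp i\sigma_j t}\int_0^{\sqrt{\sigma_j t}} e^{\pm iu^2}\tilde A(u)\,du$ for $I_-$, where the amplitudes take the form $A(u) = F(\tau(u))/\sqrt{1+u^2/(2\sigma_j t)}$ with $\tau(u) = (u/\sqrt{t})\sqrt{2\sigma_j+u^2/t}$, and analogously for $\tilde A$ with $s(u) = (u/\sqrt{t})\sqrt{2\sigma_j-u^2/t}$. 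Taylor-expanding both amplitudes in the parameter $u^2/(2\sigma_j t)$ to order $k_0$ and integrating term-by-term against the standard moments $\int_0^\infty e^{\mp iu^2}u^{2k}\,du = \tfrac{1}{2}\Gamma(k+\tfrac{1}{2})e^{\mp i\pi(2k+1)/4}$ produces an asymptotic series in powers of $(\sigma_j t)^{-1}$ with leading factor $(\sigma_j t)^{-1/2}$, whose coefficients (once the scaling $\tau \mapsto \sigma_j\rho$ is unwound) are determined by derivatives of $F(\sigma_j\tau)/\sqrt{\tau^2+1}$ at $\tau = 0$ as stated. For \eqref{eq:statphaseexpl} the leading phase factors combine coherently, $e^{-i\pi/4} - ie^{+i\pi/4} = 2e^{-i\pi/4}$, giving $b_0 = F(0)e^{-i\pi/4}\sqrt{2\pi}$; for \eqref{eq:statphaseexp2} the analogous combination is $e^{+i\pi/4} - ie^{-i\pi/4} = 0$, and the same mechanism cancels every term of the asymptotic series at orders $k < k_0$, leaving only the remainder to bound.

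The main technical obstacle will be obtaining the two-part error bound uniformly in $\sigma_j$. The Taylor remainder at order $k_0$ carries a factor $|\tau(u)|^{2k_0} = (u^2/t)^{k_0}(2\sigma_j+u^2/t)^{k_0}$ times $\sup_\tau\|F^{(2k_0)}(\tau)\|$. Expanding $(2\sigma_j+u^2/t)^{k_0}$ by the binomial theorem, the pure $(2\sigma_j)^{k_0}$ summand yields, after Fresnel integration and together with analogous contributions coming from lower-order remainders of $F$ and of the Jacobian factor $(1\pm u^2/(2\sigma_j t))^{-1/2}$, the first piece $(\sigma_j t)^{-k_0}\sum_{k\le 2k_0+1}\sigma_j^k\sup\|F^{(k)}\|$ of the stated bound; the remaining binomial summands carry extra powers $(u^2/t)^j$ with $j\ge 1$, which after Fresnel integration combine to produce the second piece $t^{-k_0}(1+\sigma_j)^{k_0}\sum_{k\le 2k_0+1}\sup\|F^{(k)}\|$. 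One further integration by parts in $u$ is required to control the finite upper limit $u\le\sqrt{\sigma_j t}$ of $I_-$ and accounts for the appearance of derivatives up to order $2k_0+1$. Inequality \eqref{eq:statphaseexp2} then follows from precisely the same remainder estimate, since the cancellation at every order leaves only these error terms.
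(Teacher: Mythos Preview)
Your strategy---split at $\lambda=\sigma_j$, substitute $\tau_j$, and run stationary phase at the endpoint---is the same as the paper's, and your leading-order computation is correct. The paper's execution differs in one structural way that you should adopt: before changing variables it writes $F=F_e+F_o$ as the sum of its even and odd parts in $\tau_j$. The odd quotient $F_o(\tau_j)/\tau_j$ is then a smooth function of $\tau_j^2=\lambda^2-\sigma_j^2$, hence smooth in $\lambda$, so a plain $k_0$-fold integration by parts in $\lambda$ gives exactly the second error term $t^{-k_0}(1+\sigma_j)^{k_0}\sum_{k\le 2k_0+1}\sup\|F^{(k)}\|$ (and this is where the order $2k_0+1$ actually comes from, not from the finite upper limit of $I_-$, which is irrelevant since $F(is)$ is supported in $|s|<c_0/2<\sigma_j$). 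For the even part, the two half-line integrals in $\tau'$ extend by symmetry to full-line integrals, so one can apply H\"ormander's stationary phase theorem directly in the rescaled variable $\tau'=\sigma_j\tau$ with large parameter $\sigma_j t$; this yields the coefficients $b_k$ and the first error term $(\sigma_j t)^{-k_0}\sum_k\sigma_j^k\sup\|F^{(k)}\|$. Your binomial bookkeeping tries to reconstruct this two-part error after the fact, which is possible but messier; the even/odd split does it for free.

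There is a genuine gap in your argument for \eqref{eq:statphaseexp2}. You check only that the $k=0$ combination $e^{i\pi/4}-ie^{-i\pi/4}$ vanishes and then assert that ``the same mechanism cancels every term.'' But the phase factors from the half-line Fresnel moments alone do \emph{not} cancel at higher $k$ (for example $e^{i3\pi/4}-ie^{-i3\pi/4}=-2e^{-i\pi/4}\neq 0$); the true cancellation depends on the interplay between those phase factors and the additional powers of $i$ coming from the amplitude substitution $\tau\mapsto is$ in $I_-$, together with the structure of the stationary-phase operator. The paper verifies this cancellation explicitly, term by term, for the even part (the odd part having no expansion terms at all), and also supplies a second proof by contour deformation in the $\tau_j$-plane: writing the sum $I_++I_-$ as a single contour integral along the boundary of the first quadrant and pushing the corner into the interior, where $\Im\sqrt{z^2+\sigma_j^2}>0$, makes the rapid decay manifest. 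Either argument closes the gap; your one-line assertion does not.
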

\begin{proof}  
In order to simplify notation, we give the proof for $\mcx=\Complex$, with
the notation $|\alpha |=\| \alpha\|_{\Complex}$.  The proof for a general 
Banach space $\mcx$ is essentially identical, though 
notationally more complicated.

We may write
$F(\tau_j(\lambda))= F_e(\tau_j(\lambda)) + F_o(\tau_j(\lambda))$, where
\begin{align*}
F_e(\tau_j(\lambda)) &= \frac{1}{2}\big(F(\tau_j(\lambda)) + F(-\tau_j(\lambda))\big)\\
F_o(\tau_j(\lambda)) &= \frac{1}{2}\big(F(\tau_j(\lambda)) - F(-\tau_j(\lambda))\big).
\end{align*}
Now $F_o(\tau_j(\lambda))/\tau_j(\lambda)$ is in fact a smooth function of $\tau_j^2=\lambda^2-\sigma_j^2$, and hence a smooth function of $\lambda$.  Then, 
integrating by parts, 
\begin{align*}
\left |\int_0^\infty e^{\pm i\lambda t} \frac{F_o(\tau_j(\lambda))}{\tau_j(\lambda)}  d\lambda \right | &  \leq t^{-k_0} \left \| \frac{d^{k_0}}{d\lambda^{k_0}}  \frac{F_o(\tau_j(\lambda))}{\tau_j(\lambda)} \right\|_{L^1}\\ & 
\leq Ct^{-k_0} (1+\sigma_j)^{k_0} \sum_{k\leq 2k_0+1}\sup_\tau | D_\tau^k F(\tau)| .
\end{align*}

To evaluate the integral $\int_0^\infty e^{\pm i t \lambda}\frac{ F_e(\tau_j(\lambda))}{\tau_j(\lambda)}d\lambda$, we make a change of  variables. For $\lambda\in [\sigma_j,\infty)$, 
$\tau_j(\lambda)\in [0,\infty)$ and we use the variable $\tau'=\tau_j$; for $\lambda\in [0,\sigma_j], $ $ \tau_j(\lambda)\in i[0,\infty)$ and we use the 
variable $\tau'=-i\tau _j$.  Hence 
\begin{multline}\label{eq:Fochange1}
\int_0 ^\infty e^{\pm i t \lambda}\frac{ F_e(\tau_j(\lambda))}{\tau_j(\lambda)}d\lambda \\= \int_0^\infty e^{\pm it\sqrt{(\tau')^2 +\sigma_j^2}} 
\frac{F_e(\tau')}{\sqrt{(\tau')^2+\sigma_j^2}}d\tau'
- i\int_0^{\sigma_j} e^{\pm it\sqrt{\sigma_j^2-(\tau')^2}} \frac{F_e(i\tau')}{\sqrt{\sigma_j^2-(\tau')^2}}d\tau'.
\end{multline}
For the first integral on the right-hand side of (\ref{eq:Fochange1}) we perform a change of variable in order to be able to track dependence on $\sigma_j$.
Using 
$\tau'=\sigma_j \tau$, we have
\begin{align}
\int_0^\infty e^{\pm it\sqrt{(\tau')^2 +\sigma_j^2}} \frac{F_e(\tau') }{\sqrt{(\tau')^2+\sigma_j^2}}d\tau'
& =  \int_0^\infty e^{\pm it\sigma_j \sqrt{\tau^2 +1}}\frac{ F_e(\sigma_j \tau )}{\sqrt{\tau^2+ 1}}d\tau \nonumber \\
& = \frac{1 }{2}\int_{-\infty}^\infty e^{\pm it\sigma_j \sqrt{\tau^2 +1}}\frac{ F_e(\sigma_j \tau )}{\sqrt{\tau^2+ 1}}d\tau
\end{align}
where for the second equality we have used that the integrand is even in $\tau$.
For this integral, we may use the method of stationary phase.  
Note that the only stationary point is at $\tau=0$.
By \cite[Theorem 7.7.5]{ho1}, we have that there are constants $\tilde{b}_{k\pm}$, depending on $F_e$ and $\sigma_j$, so that 
\begin{multline}
\left| \int_0^\infty e^{\pm it\sigma_j \sqrt{\tau^2 +1}}
\frac{ F_e(\sigma_j \tau )}{\sqrt{\tau^2+ 1}}d\tau
-  \left( \sigma_j t\right)^{-1/2} e^{\pm i\sigma_j t} \sum_{k=0}^{k_0-1} \tilde{b}_{k\pm }(\sigma_j t)^{-k}\right| \\
\leq C  (\sigma_j t)^{-k_0} \sum_{|\alpha| \leq 2k_0}  \sup_\tau\left| D^\alpha_\tau \left(\frac{ F_e(\sigma_j \tau) }{\sqrt{\tau^2+ 1}}\right)\right |.
\end{multline}
Moreover, the $\tilde{b}_{k\pm }$ are determined by derivatives with respect 
to $\tau$ of  $F_e(\sigma_j \tau) /{\sqrt{\tau^2+ 1}}$ of order less than or equal to $2k$, evaluated at $\tau =0$. 
The coefficient $\tilde{b}_{0\pm }=F(0)\sqrt{\pi/ 2}e^{\pm i\pi/4}$.  By allowing the constant to depend on $k_0$, we may bound 
$$\sum_{|\alpha| \leq 2k_0}  \sup_\tau \left| D^\alpha_\tau \left( \frac{F_e(\sigma_j \tau)}{\sqrt{\tau^2+ 1}}\right)\right | \leq 
C_{k_0}\sum_{k  \leq 2k_0}  \sup_\tau \left| \sigma_j^k  F^{(k)}(\tau) \right|.$$
A similar computation gives a similar expansion for the second integral on the right-hand side of (\ref{eq:Fochange1}) since the support of $F_e(i\tau')$ is small enough that 
$1/\sqrt{\sigma_j^2-(\tau')^2}$ is smooth on the support of $F_e$.
We note that $k=0$ coefficient for the expansion of the second term on the 
right-hand side of (\ref{eq:Fochange1})
(including the factor of $-i$ in front) is $-i \sqrt{\pi/2} F(0) e^{\mp i \pi /4}$.    This finishes the 
proof of (\ref{eq:statphaseexpl}), and shows that the integral on the left in (\ref{eq:statphaseexp2}) has a similar expansion.

To complete the proof of (\ref{eq:statphaseexp2}) it suffices to show that the coefficients in the expansion
are $0$.  We shall give two proofs of this.  For the 
first,  we return to (\ref{eq:Fochange1}), but with the ``$+$'' sign, writing  
\begin{multline}\label{eq:show0}
\int_0 ^\infty e^{ i t \lambda}\frac{ F_e(\tau_j(\lambda))}{\tau_j(\lambda)}d\lambda \\= \int_0^\infty e^{ it\sqrt{(\tau')^2 +\sigma_j^2}} 
\frac{F_e(\tau')}{\sqrt{(\tau')^2+\sigma_j^2}}d\tau'
- i\int_0^{\sigma_j} e^{ it\sqrt{\sigma_j^2-(\tau')^2}} \frac{F_e(i\tau')}{\sqrt{\sigma_j^2-(\tau')^2}}d\tau'\\
= \frac{1}{2} \left( \int_{-\infty}^\infty e^{ it\sqrt{(\tau')^2 +\sigma_j^2}} 
\frac{F_e(\tau')}{\sqrt{(\tau')^2+\sigma_j^2}}d\tau'
- i\int_{-\sigma_j}^{\sigma_j} e^{ it\sqrt{\sigma_j^2-(\tau')^2}} \frac{F_e(i\tau')}{\sqrt{\sigma_j^2-(\tau')^2}}d\tau' \right).
\end{multline}
As previously for the second equality we have used that the integrands are even in $\tau'$.  Setting 
$g(\tau')=((\tau')^2+\sigma_j^2))^{1/2}- (\sigma_j+(\tau')^2/2))$  we find from using the explicit
expression for the stationary phase coefficients (see, for example, \cite[Theorem 7.7.5]{ho1}) and 
summing the contributions from the two integrals  that
the coefficients in the asymptotic expansion of (\ref{eq:show0}) are linear combinations of
\begin{equation}\label{eq:munu}
e^{i\pi/4} D^{2\nu}_{\tau'}\left( \frac{g^\mu(\tau') F_e(\tau')}{\sqrt{(\tau')^2+\sigma_j^2}}\right)_{\restrict_{\tau'=0}} -
i e^{-i\pi/4} (-1)^\nu D^{2\nu}_{\tau'}
\left( \frac{g^\mu(i\tau') F_e(i\tau')}{\sqrt{(i\tau')^2+\sigma_j^2}}\right)_{\restrict{\tau'=0}}
\end{equation}
for $\nu,\mu\in \Natural_{0}.$
Since if $h$ is a smooth function in a complex neighborhood of the origin, then
$D^{2\nu}_{\tau'}h(i\tau')\restrict_{\tau'=0}= (i)^{2\nu}D^{2\nu}_{\tau'}h(\tau')\restrict_{\tau'=0}$, we see that 
the quantity in (\ref{eq:munu}) is $0$, and the 
sum of the terms coming from the stationary phase expansions in (\ref{eq:show0}) is $0$.

\begin{figure}[h] \label{f:contours}
\labellist
\small 
\pinlabel $\gamma_1$ [l] at 55 18
\pinlabel $\gamma_2$ [l] at 235 18
\endlabellist
 \includegraphics[width=12cm]{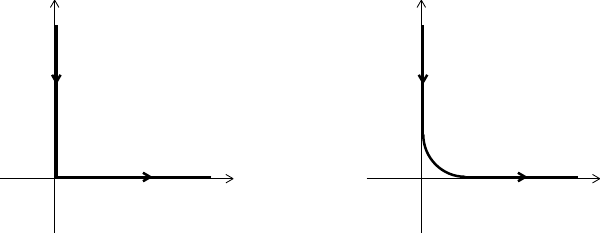} 
 \caption{The contours of integration $\gamma_1$ and $\gamma_2$.}\label{f:taucorner}
\end{figure}

Now we outline an alternate, perhaps  more intuitive, proof that the 
sum of the stationary phase coefficients in arising from the right-hand side of  (\ref{eq:show0}) is $0$.
The middle expression in (\ref{eq:show0}) may 
be written
\begin{equation}\label{eq:gammaint}
\int_{\gamma_1} e^{i t \sqrt{z^2+\sigma_j^2}}\frac{F_e(z)}{\sqrt{z^2+\sigma_j^2}}dz
\end{equation}
where $\gamma _1$ is as in Figure \ref{f:contours}: the path that goes down the positive
imaginary axis to the origin, and then to infinity along the positive real 
axis.  We understand the square root to be analytic in the 
 closed first quadrant away from $i\sigma_j$ and to be  positive on
the positive real axis; this ensures $\Im \sqrt{z^2+\sigma_j^2}> 0$
in the open first quadrant.
 {\em If} $F_e$ were analytic in a neighborhood of the origin, then by Cauchy's 
Theorem  we could write
$$\int_{\gamma_1} e^{i t \sqrt{z^2+\sigma_j^2}}\frac{F_e(z)}{\sqrt{z^2+\sigma_j^2}}dz
= \int_{\gamma_2} e^{i t \sqrt{z^2+\sigma_j^2}}\frac{F_e(z)}{\sqrt{z^2+\sigma_j^2}}dz
$$
where $\gamma_2$ is 
smooth,  differs from $\gamma_1$ only in a suitably small neighborhood 
of the origin, and is contained in the closure of the first quadrant; see
Figure \ref{f:contours}.  
Since for $t>0$, $|e^{i\sqrt{z^2+\sigma_j^2} t}|\leq 1$ on $\gamma_2$
and (with suitably parameterized $\gamma_2$) the phase has no stationary points on $\gamma_2$, by repeated integration
by parts we can see that the integral in (\ref{eq:show0}) is $O(t^{-k})$, any
$k\in \Natural$, as $t\rightarrow \infty$.

If $F_e$ is only smooth, not complex analytic, in a neighborhood of the origin,
write $F_e=\tilde{\psi}T_{2k} + (F_e-\tilde{\psi}T_{2k})$ where 
$T_{2k}$ is the $2k$th Taylor polynomial of $F_e$ at $0$ and $\tilde{\psi}\in C_c^{\infty}(\Complex)$ is $1$ in a neighborhood of the origin.  Then
the argument outlined above may be applied to the 
integral with $\tilde{\psi}T_{2k}$ if $\gamma_2$ differs 
from $\gamma_1$ only on the set where $\tilde{\psi}$ is $1$.  Since $ (F_e-\tilde{\psi}T_{2k})$
vanishes to order $2k+1$ at the origin, we may integrate by parts $k$ times
to see that 
$$ \int_{\gamma_1} e^{i t \sqrt{z^2+\sigma_j^2}}\frac{F_e(z)-\tilde{\psi}(z)T_{2k}(z) }{\sqrt{z^2+\sigma_j^2}}dz
= O(t^{-k}).$$
\end{proof}

The second argument for (\ref{eq:statphaseexp2}) also gives an 
intuitive reason for the difference between (\ref{eq:statphaseexpl}) and 
 (\ref{eq:statphaseexp2}).   In place of (\ref{eq:gammaint}) we have instead 
for (\ref{eq:statphaseexpl}) the integral 
$\int _{\gamma_1} e^{-it\sqrt{z^2+\sigma_j^2}} F_e(z)(z^2+\sigma_j^2)^{-1/2}dz$.
If $F_e$ is analytic near the origin, we can, as in the argument above,
use a contour deformation argument to deform the contour of integration to
$\gamma_2$.  But for $z$ in the open first quadrant,
 $e^{-it\sqrt{z^2+\sigma_j^2}}$
is exponentially increasing as $t\rightarrow \infty$.  If we instead
 deform $\gamma_1$ to avoid the origin and the first quadrant,
the deformed path must have portions in each of quadrants $2$, $3$, and $4$.
But $e^{-it\sqrt{z^2+\sigma_j^2}}$ is exponentially increasing as 
$t\rightarrow \infty$ if $z$ is in the open third quadrant.

We state another lemma, with results similar to those of Lemma \ref{l:allinonestatphase}.  Note that this differs from Lemma \ref{l:allinonestatphase}
in the domain of integration, the assumptions on where $F$ and $G$ are 
smooth, and the less explicit bound on the error.  We remark that the powers $t^{-k/2}$,
rather than $t^{-k}$ of Lemma \ref{l:allinonestatphase} are a consequence of
the fact that after changing variable $\tau=\sqrt{\lambda^2-\sigma_j^2}$,  $\tau=0$ is
both a stationary point of the phase and an endpoint of integration.
\begin{lemma}\label{l:bdrstatphase}Let $\mcx$ be a Banach space
and let $\sigma_j>0$.  Let $F\in C_c^\infty([0,\sigma_j/2);\mcx)$ and 
$G\in  C_c^\infty(i[0,\sigma_j/2);\mcx)$.  Then given $k_0\in \Natural$
there are 
$\alpha_{k,\pm},\; \beta_{k,\pm}\in \mcx$, $k\in 0,1,...,2k_0-2$, 
$C=C(F,G,\sigma_j,k_0)>0$
 so that
\begin{equation}\label{eq:spb1}
\left \| \int_{\sigma_j}^\infty e^{\pm i \lambda t} \frac{F(\tau_j(\lambda))}{\tau_j(\lambda)}d\lambda- t^{-1/2}e^{\pm i t \sigma_j} \sum_{k=0}^{2k_0-2} \alpha_{k,\pm}
t^{-k/2}\right\|_{\mcx} \leq C t^{-k_0}, \; t>0
\end{equation}
and 
\begin{equation}\label{eq:spb2}
\left \| \int_0^{\sigma_j} e^{\pm i \lambda t} \frac{G(\tau_j(\lambda))}{\tau_j(\lambda)}d\lambda- t^{-1/2}e^{\pm i t \sigma_j} \sum_{k=0}^{2k_0-2} \beta_{k,\pm}
t^{-k/2}\right\|_{\mcx} \leq C t^{-k_0}, \; t>0.
\end{equation}
Here the $\alpha_{k,\pm}$ (respectively $\beta_{k,\pm}$) are determined by 
$\sigma_j$ and the derivatives of $F$ (respectively $G$) of order at most 
$k$, evaluated at $0$.
\end{lemma}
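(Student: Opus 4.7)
\textbf{Proof proposal for Lemma \ref{l:bdrstatphase}.}

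The plan is to reduce each integral to a boundary stationary phase problem by a change of variable in $\tau_j$, and then handle the boundary stationary point by decomposing the amplitude into even and odd parts, so that the even part can be converted to a full-line (interior) stationary phase integral and the odd part to a non-stationary integral whose expansion comes from integration by parts. This parallels the strategy used in the proof of Lemma \ref{l:allinonestatphase}, the only difference being that $F$ and $G$ here are defined only on a half-line rather than on a complex disc, so the even extension must be constructed by hand instead of automatic from analyticity.

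For the first integral, substitute $\tau = \tau_j(\lambda) = \sqrt{\lambda^2-\sigma_j^2}$, so that $\lambda = \sqrt{\tau^2+\sigma_j^2}$ and $d\lambda = \tau(\tau^2+\sigma_j^2)^{-1/2}d\tau$. This transforms the left side of \eqref{eq:spb1} into
\[
\int_0^\infty e^{\pm it\sqrt{\tau^2+\sigma_j^2}}\,\frac{F(\tau)}{\sqrt{\tau^2+\sigma_j^2}}\,d\tau.
\]
The phase $\phi(\tau)=\sqrt{\tau^2+\sigma_j^2}$ has $\phi(0)=\sigma_j$, $\phi'(0)=0$, $\phi''(0)=1/\sigma_j\neq 0$, so there is a nondegenerate stationary point at the boundary $\tau=0$. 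Extend $F$ to a function $\hat F\in C_c^\infty(\Real;\mcx)$ supported in $(-\sigma_j/2,\sigma_j/2)$, and split $\hat F=\hat F_e+\hat F_o$ into its even and odd parts. Since $F=\hat F_e+\hat F_o$ on $[0,\sigma_j/2)$, the integral is unchanged. For the even piece, use
\[
\int_0^\infty e^{\pm it\sqrt{\tau^2+\sigma_j^2}}\,\frac{\hat F_e(\tau)}{\sqrt{\tau^2+\sigma_j^2}}\,d\tau=\tfrac12\int_{-\infty}^\infty e^{\pm it\sqrt{\tau^2+\sigma_j^2}}\,\frac{\hat F_e(\tau)}{\sqrt{\tau^2+\sigma_j^2}}\,d\tau
\]
and apply the standard (interior) stationary phase theorem \cite[Theorem 7.7.5]{ho1}; this yields an expansion $t^{-1/2}e^{\pm i\sigma_j t}\sum_{k\geq 0}c_{2k,\pm}t^{-k}$ with $c_{2k,\pm}$ given by derivatives of $\hat F_e/\sqrt{\tau^2+\sigma_j^2}$ at $\tau=0$, i.e.\ by even-order derivatives of $F$ at $0$. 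For the odd piece, write $\hat F_o(\tau)=\tau\,h(\tau^2)$ with $h$ smooth and compactly supported and substitute $v=\sqrt{\tau^2+\sigma_j^2}$ (so $\tau\,d\tau=v\,dv$) to obtain
\[
\int_0^\infty e^{\pm it\sqrt{\tau^2+\sigma_j^2}}\,\frac{\hat F_o(\tau)}{\sqrt{\tau^2+\sigma_j^2}}\,d\tau=\int_{\sigma_j}^\infty e^{\pm itv}\,h(v^2-\sigma_j^2)\,dv,
\]
which has no stationary point. Repeated integration by parts gives an expansion $e^{\pm i\sigma_j t}\sum_{k\geq 1}c_{2k-1,\pm}t^{-k}$ with coefficients expressible via derivatives of $\hat F_o$ at $0$, i.e.\ odd-order derivatives of $F$ at $0$. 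Together the two expansions produce the form \eqref{eq:spb1} with $\alpha_{k,\pm}$ depending only on $\sigma_j$ and on derivatives of $F$ at $0$ of order at most $k$.

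For the second integral, parametrize $\lambda\in[0,\sigma_j]$ by $\lambda=\sqrt{\sigma_j^2-s^2}$, $s\in[0,\sigma_j]$, noting that $\tau_j(\lambda)=is$ and $d\lambda=-s(\sigma_j^2-s^2)^{-1/2}ds$; then
\[
\int_0^{\sigma_j} e^{\pm i\lambda t}\,\frac{G(\tau_j(\lambda))}{\tau_j(\lambda)}\,d\lambda=-i\int_0^{\sigma_j} e^{\pm it\sqrt{\sigma_j^2-s^2}}\,\frac{\tilde G(s)}{\sqrt{\sigma_j^2-s^2}}\,ds,
\]
where $\tilde G(s)\defeq G(is)\in C_c^\infty([0,\sigma_j/2);\mcx)$. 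The phase $\pm\sqrt{\sigma_j^2-s^2}$ has a nondegenerate boundary stationary point at $s=0$ with $\phi(0)=\pm\sigma_j$ and $\phi''(0)=\mp1/\sigma_j$. The same split $\tilde G=\tilde G_e+\tilde G_o$ applies: the even part becomes a full-line stationary phase integral (after noting that $\tilde G_e$ is compactly supported in $(-\sigma_j/2,\sigma_j/2)$ and the integrand is even in $s$), and for the odd part the substitution $v=\sqrt{\sigma_j^2-s^2}$ produces an integral of the form $\int_0^{\sigma_j}e^{\pm itv}\tilde h(v)\,dv$ without stationary point, which is handled by integration by parts. Combining yields \eqref{eq:spb2} with $\beta_{k,\pm}$ determined by $\sigma_j$ and the derivatives of $G$ at $0$ of order at most $k$.

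The main obstacle, as in Lemma \ref{l:allinonestatphase}, is bookkeeping: verifying that the even/odd decomposition plus the two change-of-variable steps really express the $\alpha_{k,\pm}$ (resp.\ $\beta_{k,\pm}$) in terms of derivatives of $F$ (resp.\ $G$) at $0$ of order at most $k$, rather than $2k$. For the odd-part contributions this comes directly from integration by parts (one derivative per power of $t^{-1}$, and each power of $t^{-1}$ corresponds to two half-powers), while for the even-part contributions it comes from the fact that the stationary phase expansion of $\hat F_e/\sqrt{\tau^2+\sigma_j^2}$ at a quadratic critical point produces $c_{2k,\pm}$ that depend on derivatives up to order $2k$ of an \emph{even} function, hence on derivatives of $F$ of order at most $2k$ in total but only up to order $k$ of the full amplitude in the natural grading; the final bookkeeping is then a direct transcription from \cite[Theorem 7.7.5]{ho1}. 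Since the statement allows $C$ to depend on $F$, $G$, $\sigma_j$, and $k_0$, no uniformity tracking (as in Lemma \ref{l:allinonestatphase}) is required.
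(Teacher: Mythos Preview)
Your argument is correct, but it takes a different route from the paper's. The paper makes the same change of variable $\tau=\tau_j(\lambda)$ to obtain
\[
\int_{\sigma_j}^\infty e^{\pm i\lambda t}\frac{F(\tau_j(\lambda))}{\tau_j(\lambda)}\,d\lambda
=\int_0^\infty e^{\pm it\sqrt{\tau^2+\sigma_j^2}}\frac{F(\tau)}{\sqrt{\tau^2+\sigma_j^2}}\,d\tau,
\]
and then simply invokes the classical \emph{half-line} stationary phase expansion of Erd\'elyi (\cite[Section~2.9]{Erd}), which directly gives an expansion in powers of $t^{-1/2}$ with coefficients determined by the one-sided derivatives of the amplitude at the endpoint. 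That is the whole proof in the paper.

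Your reduction---extend $F$ across $0$, split into even and odd parts, treat the even part by the interior stationary phase theorem \cite[Theorem~7.7.5]{ho1} after symmetrizing, and treat the odd part via the substitution $v=\sqrt{\tau^2+\sigma_j^2}$ followed by integration by parts---is a perfectly good substitute for the Erd\'elyi citation, and has the advantage of being self-contained (it reuses only the tools already deployed in Lemma~\ref{l:allinonestatphase}). The cost is length. One small clarification: your final paragraph on bookkeeping is slightly garbled. The point is simply that the even part contributes $\alpha_{2m,\pm}$ (the coefficient of $t^{-1/2-m}$), which by \cite[Theorem~7.7.5]{ho1} depends on derivatives of $\hat F_e$ at $0$ of order $\le 2m$, hence on $F^{(0)}(0),\dots,F^{(2m)}(0)$; while the odd part contributes $\alpha_{2m-1,\pm}$ (the coefficient of $t^{-m}$), which after $m$ integrations by parts depends on $h(0),\dots$, i.e.\ on $F'(0),F'''(0),\dots,F^{(2m-1)}(0)$. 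In both cases $\alpha_{k,\pm}$ depends on derivatives of $F$ at $0$ of order at most $k$, exactly as claimed; there is no ``natural grading'' subtlety.
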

\begin{proof} We prove only (\ref{eq:spb1}), as the proof of (\ref{eq:spb2}) 
is almost identical.  
By introducing  $\tau=\tau_j(\lambda)$ as the variable of integration, 
\begin{equation}
\int_{\sigma_j}^\infty e^{\pm i \lambda t} \frac{F(\tau_j(\lambda))}{\tau_j(\lambda)}d\lambda = \int_0^\infty e^{\pm i t\sqrt{\tau^2+\sigma_j^2} }\frac{F(\tau)}{\sqrt{\tau^2+\sigma_j^2}}d\tau.
\end{equation}
Then an application of \cite[Section 2.9]{Erd} proves 
(\ref{eq:spb1}), with coefficients $\alpha_{k,\pm}$ determined by 
$\sigma_j$ and derivatives of $F$, evaluated at $0$, of order at most $k$.
\end{proof}

\vspace{2mm}
\noindent
{\bf Acknowledgments.}   The authors are grateful to the Simons Foundation for its support through the Collaboration Grants for Mathematicians program. KD was also supported by the National Science Foundation through  Grant DMS-1708511. 
It is a pleasure to thank Jeremy Marzuola and Maciej Zworski for helpful conversations.

\bibliographystyle{alpha}
\bibliography{bibtexfiledatchev}

\end{document}